\def\dar[#1]{\ar@<2pt>[#1]\ar@<-2pt>[#1]}
\newtheorem{Thm}{Theorem}[section]
\newtheorem{Prop}[Thm]{Proposition}
\newtheorem{Lem}[Thm]{Lemma}
\newtheorem{Cor}[Thm]{Corollary}      
\newtheorem{Conj}[Thm]{Conjecture}
\newtheorem*{Theo}{Theorem}
\theoremstyle{remark}
\newtheorem{Rem}[Thm]{Remark}
\newtheorem*{Rema}{Remark}
\theoremstyle{definition}
\newtheorem{Def}[Thm]{Definition}
\newtheorem{Exa}[Thm]{Example}
\newcommand\kk{{\bold k}}
\newcommand\cdga{cdga}
\newcommand\CDGA{\bold{cdga}}
\renewcommand\mod{mod}
\newcommand\MOD{\bold{mod}}
\newcommand\CAT{\bold{cat}_\infty}
\newcommand\QCat{qcat}
\newcommand\lie{Lie}
\newcommand\LIE{\bold{Lie}}
\newcommand\SPA{\bold{spa}}
\newcommand\calg{Calg^{aug}}
\newcommand\calgnu{Calg^{nu}}
\newcommand\CALG{\bold{Calg}^{\bold{aug}}}
\newcommand\CALGNU{\bold{Calg}^{\bold{nu}}}
\newcommand\CALGSM{\bold{Calg}^{\bold{sm}}}
\newcommand\sset{{sSet}}
\newcommand\SSET{\bold{sSet}}
\newcommand\egr{{\epsilon-gr}}
\newcommand{\adjointHA}{
\,\begin{matrix}\longrightarrow \\[-0.3cm] \longleftarrow\end{matrix}\,}
\author{Damien Calaque and Julien Grivaux}
\title{Formal moduli problems and formal derived stacks}
\date{}
\begin{document}

\maketitle
\thanks{\textit{To the memory of Jean-Louis Koszul}}
\epigraph{A mathematician, like a painter or a poet, is a maker of patterns. If his patterns are more permanent than theirs, it is because they are made with \textit{ideas}.}{G. H. Hardy -- \textit{A Mathematician's Apology}}

\begin{abstract}
This paper presents a survey on formal moduli problems. It starts with an introduction to pointed formal moduli problems and a sketch of proof of a Theorem (independently proven by Lurie and Pridham) which gives a precise mathematical formulation for Drinfeld's \textit{derived deformation theory} philosophy, which gives a correspondence between formal moduli problems and differential graded Lie algebras. The second part deals with Lurie's general theory of deformation contexts, which we present in a slightly different way than the original paper, emphasizing the (more symmetric) notion of Koszul duality contexts and morphisms thereof. In the third part, we explain how to apply this machinery to the case of non-split formal moduli problems under a given derived affine scheme; this situation has been dealt with recently by Joost Nuiten, and requires to replace differential graded Lie algebras with differential graded Lie algebroids. In the last part, we globalize this to the more general setting of formal thickenings of derived stacks, and suggest an alternative approach to results of Gaitsgory and Rozenblyum. 
\end{abstract}

\setcounter{tocdepth}{2}
\tableofcontents

\section*{Introduction}

The aim of these lecture notes is to present a recent work, due independently to Lurie and Pridham, concerning an equivalence of infinity-categories between formal moduli problems and differential graded Lie algebras. The link between deformation theory and dg-Lie algebras has a long history, which is as old as deformation theory itself. One of the first occurrence of a relation between these appears in Kodaira--Spencer's theory of deformations of complex compact manifolds (\textit{see} \cite{kodaira_complex_2005}): if $X$ is such a manifold, then infinitesimal deformations of $X$ over a local artinian $\mathbb{C}$-algebra $A$ with maximal ideal $m_A$ correspond to Maurer--Cartan elements of the dg-Lie algebra $\mathfrak{g}_X \otimes_{\mathbb{C}} m_A$ modulo gauge equivalence, where $\mathfrak{g}_X$ is the dg-Lie algebra $\Gamma(X, \mathscr{A}^{0, \bullet}_X \otimes \mathrm{T}_X)$, the differential being the $\overline{\partial}$ operator. This very concrete example illustrates the following general principle: to any sufficiently nice dg-Lie algebra it is possible to attach a deformation functor (\textit{see e.g.} \cite[\S 3.2]{Kontsevich}), which is defined on local artinian algebras as follows: 
\[
{Def}_{\mathfrak{g}} (A) = \mathrm{MC}(\mathfrak{g} \otimes m_A) / \, \textit{gauge equivalence}
\]
This correspondence was carried out by many people, including Quillen, Deligne and Drinfeld. In a letter to Schechtman \cite{DrDDT}, Drinfeld introduced in 1988 the \textit{Derived Deformation Theory (DDT)} philosophy: 
\begin{center}
\textit{Every (dg/derived) deformation problem is controlled by a dg-Lie (or $L_\infty$-) algebra.}
\end{center}
Since then, there has been a lot of work confirming this philosophy. We can refer the interested reader to the expository paper \cite{abramovich_differential_2009} for more details. Let us give now a few examples, which are related to derived algebraic geometry.

\par \medskip

In 1997 Kapranov \cite{Ka97-1} studied deformations of local systems: given a affine algebraic group $G$ and a $G$-local system $E$ on a manifold $S$, deformations of $E$ are encoded by a formal dg-scheme $\mathrm{RDef}(E)$ whose tangent complex $\mathbb{T}_{[E]}\mathrm{RDef}(E)$ at the closed point $[E]$ is $\mathrm{R}\Gamma(S, \mathrm{ad}(E)) [1]$. The next result is that $\mathbb{T}_{[E]}\mathrm{RDef}(E) [-1]$ is naturally an $L_\infty$-algebra, the corresponding Lie algebra structure on the cohomology groups $\mathrm{H}^i(S, \mathrm{ad}(E))$ being induced by the natural Lie structure on $\mathrm{ad}(E)$. 
\par \medskip
This last observation is not at all a hazard, and reflects a far more general phenomenon: in \textit{loc. cit.}, Kapranov proved\footnote{This idea is also present in Hinich's \cite{dgcu}.} that for any smooth dg-scheme $X$, the shifted tangent complex $\mathbb{T}_xX[-1]$ at a point $x :* \to X$ carries an $L_\infty$-algebra structure that determines the formal geometry of $X$ around $x$. A modern rephrasing of Kapranov's result can be presented as follows: for any derived affine scheme $X=Spec(A)$ with a $\kk$-point $x=Spec(A \to k)$, we have that 

\[
\mathbb{T}_xX[-1]  \simeq \mathbb{T}_{A} \otimes_{A} \kk [-1] \simeq  \mathbb{T}_{\kk / A} \simeq\mathrm{RDer}_A (\kk,\kk)
\]
and thus $\mathbb{T}_xX[-1]$ is a derived Lie algebra. 
\par \medskip
Let us explain another folklore calculation\footnote{We don't know a precise reference for that, but it seems to be known among experts. } confirming this result in another concrete example. Let $X=[G_0/G_1]$ be a $1$-stack presented by a smooth groupoid $G_1 \begin{matrix}  \overset{s}{\longrightarrow} \\[-0.2cm] \underset{t}{\longrightarrow} \end{matrix} \,G_0$, where  $G_0$ and $G_1$ are smooth affine algebraic varieties over a field $\kk$. If $x$ is a $\kk$-point of $G_0$, then the tangent complex $\mathbb{T}_{[x]}X$ of $X$ at the $\kk$-point $[x]$ is a 2-step complex $V \to W$ where
$V=T^s_{\mathrm{id}_x} G_1$ sits in degree $-1$, $W=T_xG_0$ sits in degree $0$, and the arrow is given by the differential of the target map $t$ at $\mathrm{id}_x$. 
\par \medskip
According to Kapranov's result, there should exist a natural $L_\infty$-structure on $\mathbb{T}_{[x]}X[-1]$. 
Let us sketch the construction of such a structure. 
Observe that $L:=(\mathrm{T}^s G_1)_{| G_0}$ has the structure of a $(\kk, A)$ Lie algebroid, where $G_0=Spec(A)$. In particular we have a $\kk$-linear Lie bracket $\Lambda^2_{\kk} L \to L$ and an anchor map $L\to TG_0=\mathrm{Der}_\kk(A)$. 
Very roughly, Taylor components at $x$ of the anchor give us maps $S^n(W)\otimes V \to W$, and Taylor components of the bracket at $x$ give us maps $S^n(W)\otimes\Lambda^2 (V) \to V$. 
These are the only possible structure maps for an $L_\infty$-algebra concentrated in degrees $0$ and $1$. 
Equations for the $L_\infty$-structure are guaranteed from the axioms of a Lie algebroid. 
\par \medskip
As a particular  but illuminating case, we can consider the stack $BG$ for some affine algebraic group $G$. Here $G_0=\{*\}$ and $G_1=G$. Then $\mathbb{T}_{*} BG[-1] \simeq \mathfrak{g}$ where $\mathfrak{g}$ is the Lie algebra of $G$, and the $L_{\infty}$-structure is simply the Lie structure of $\mathfrak{g}$.
\par \medskip
Let us explain how all this fits in the DDT philosophy. Any derived affine stack $Spec(A)$ endowed with a $\kk$-point $x$ defines a representable deformation functor $B \rightarrow \mathrm{Hom}(B, A)$, the $\mathrm{Hom}$ functor being taken in the category of $\kk$-augmented commutative differential graded algebras (cdgas). Hence it should be associated to a dg-Lie (or indifferently a $L_{\infty}$-) algebra. This dg-Lie algebra turns out to be  \textit{exactly} $\mathbb{T}_x \, Spec(A)[-1]$, endowed with Kapranov derived Lie structure. In this way, we get a clear picture of the DDT philosophy for representable deformation functors. However, the work of Lurie and Pridham goes way beyond that; what they prove is the following statement: 

\begin{Theo}[\cite{DAG-X,Pri}]
Over a base field of characteristic zero, there is an equivalence of $\infty$-categories between formal moduli problems and dg-Lie algebras. 
\end{Theo}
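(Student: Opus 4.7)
The plan is to package the correspondence as a Koszul duality between augmented cdgas and dg-Lie algebras in the sense of Section 2, and then restrict it to the appropriate subcategories. On the algebraic side I would introduce the Chevalley--Eilenberg cochain functor
$$
C^*: \LIE_\kk^{\mathrm{op}} \longrightarrow \CALG, \qquad \mathfrak{g} \longmapsto \widehat{\mathrm{Sym}}\bigl(\mathfrak{g}^\vee[-1]\bigr)
$$
with its usual CE differential, together with a left adjoint $\mathfrak{D}: \CALG \to \LIE_\kk^{\mathrm{op}}$ of derived primitives / bar--cobar type. The characteristic zero hypothesis enters precisely here: it is what makes $(C^*, \mathfrak{D})$ into a Koszul duality pair, namely that both unit and counit become equivalences on the relevant subcategories of small, respectively good, objects; this ultimately reduces to the operadic Koszul self-duality of $(\mathrm{Com}, \mathrm{Lie})$, which is available only in characteristic zero.

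\textbf{Definition of the functor.} Using the adjunction, I would define $\Psi: \LIE_\kk \to \mathbf{FMP}_\kk$ by
$$
\Psi(\mathfrak{g})(A) := \Map_{\LIE_\kk}\bigl(\mathfrak{D}(A), \mathfrak{g}\bigr) \;\simeq\; \Map_{\CALG}\bigl(A, C^*(\mathfrak{g})\bigr),
$$
which in concrete examples is the Deligne--Hinich--Getzler Maurer--Cartan space $\mathrm{MC}_\bullet(\mathfrak{g} \otimes \mathfrak{m}_A)$ for $A$ small with maximal ideal $\mathfrak{m}_A$. That $\Psi(\mathfrak{g})$ is a formal moduli problem is then a direct verification: $\Psi(\mathfrak{g})(\kk) \simeq *$ since $\mathfrak{D}(\kk) = 0$, while the Schlessinger-type pullback condition reduces to the fact that $\mathfrak{D}$ sends a pullback square of small augmented cdgas (along an $H^0$-surjection with square-zero kernel) to a pushout square of dg-Lie algebras, combined with the formal observation that $\Map(-, \mathfrak{g})$ turns pushouts into pullbacks of spaces.

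\textbf{Inverse and the main obstacle.} To prove $\Psi$ is an equivalence I would construct a candidate inverse sending an FMP $F$ to the shifted tangent complex $\mathbb{T}_F[-1]$, where $\mathbb{T}_F[n] := F(\kk \oplus \kk[n])$, endowed with a dg-Lie structure built following the Kapranov philosophy recalled in the introduction, i.e.\ by evaluating $F$ on square-zero extensions and transporting the cocommutative structure present on the Koszul-dual side. Fully faithfulness of $\Psi$ should then follow relatively cheaply from the tangent complex computation $\mathbb{T}_{\Psi(\mathfrak{g})} \simeq \mathfrak{g}[1]$ together with the fact that the free dg-Lie algebras on $\kk[n]$ form a family of compact generators of $\LIE_\kk$. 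The main obstacle I anticipate is essential surjectivity: one must show that an arbitrary FMP is reconstructed from its values on the square-zero extensions $\kk \oplus \kk[n]$. My plan is to exploit presentability of both sides and argue that every small augmented cdga can be built, by iterated homotopy pullbacks along the generating square-zero extensions, from the basepoint $\kk$; any FMP is then determined by the tangent spectrum $\{F(\kk \oplus \kk[n])\}$, and the Koszul duality established in the first step propagates this reconstruction across the adjunction, identifying $F$ with $\Psi(\mathbb{T}_F[-1])$.
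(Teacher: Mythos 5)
Your overall route is the same as the paper's (and Lurie's): set up the Chevalley--Eilenberg/Koszul adjunction $\mathfrak{D}\colon \CALG_\kk \rightleftarrows \LIE_\kk^{op}\colon \mathrm{CE}^{\bullet}$ and send $\mathfrak{g}$ to $A\mapsto \mathrm{Hom}_{\LIE_\kk}(\mathfrak{D}(A),\mathfrak{g})$. But two of your identifications are off. First, $\mathrm{Hom}_{\LIE_\kk}(\mathfrak{D}(A),\mathfrak{g})\simeq \mathrm{Hom}_{\CALG_\kk}(A,\mathrm{CE}^{\bullet}(\mathfrak{g}))$ is \emph{not} the adjunction: since $\mathfrak{D}$ is left adjoint to $\mathrm{CE}^{\bullet}$ with values in $\LIE_\kk^{op}$, the adjunction identifies $\mathrm{Hom}_{\CALG_\kk}(A,\mathrm{CE}^{\bullet}(\mathfrak{g}))$ with maps $\mathfrak{g}\to\mathfrak{D}(A)$; the Maurer--Cartan space is modelled by maps \emph{out of} $\mathrm{CE}^{\bullet}(\mathfrak{g})$ into $A$ (Remark \ref{muraillette}), and its agreement with $\Psi(\mathfrak{g})(A)$ holds only in good cases and is a \emph{consequence} of the hard part, not an input. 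Second, the statement that ``$\mathfrak{D}$ sends a pullback of small cdgas to a pushout'' is not a direct verification: $\mathfrak{D}$ preserves colimits, and these squares are limits. This is precisely Proposition \ref{yolo} \textbf{(D)}, which rests on the counit $\mathfrak{D}\mathrm{CE}^{\bullet}(\mathfrak{g})\to\mathfrak{g}$ being an equivalence for good $\mathfrak{g}$ (Proposition \ref{opinel}); that is the genuinely technical input (very good models, the uncompleted Chevalley--Eilenberg algebra, cotangent-complex and flatness arguments), which you black-box under ``operadic Koszul self-duality of $(\mathrm{Com},\mathrm{Lie})$''.

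The genuine gap is in your fully-faithfulness/essential-surjectivity step. A formal moduli problem is \emph{not} determined by its tangent spectrum $\{F(\kk\oplus\kk[n])\}_n$: that datum only recovers the underlying complex $\mathbb{T}_F$, forgetting the bracket and all higher structure, so two inequivalent dglas with the same underlying complex would give the same ``reconstruction''. It is true that every small cdga is an iterated pullback along $\kk\to\kk\oplus\kk[n]$ and that $F$ preserves these pullbacks, but the pullback data involves the maps $F(M)\to F(\kk\oplus\kk[n])$ for all small $M$, i.e.\ the whole functor, not just its values at the $\kk\oplus\kk[n]$; so ``$F$ is determined by the tangent spectrum'' fails, and ``Koszul duality propagates the reconstruction'' is not an argument. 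Likewise, $\mathbb{T}_{\Psi(\mathfrak{g})}\simeq\mathfrak{g}[1]$ plus generation by free Lie algebras only gives conservativity of $\Psi$, not full faithfulness. The actual proof (Theorem \ref{bomb}) constructs the left adjoint $\Phi$ abstractly (both sides are presentable, $\Psi$ preserves limits and accessible colimits), proves $\Psi$ conservative, and then shows the unit $\mathrm{id}\Rightarrow\Psi\Phi$ is an equivalence by resolving an arbitrary formal moduli problem by prorepresentable ones via hypercoverings, where the hypothesis that $\Theta$ preserves sifted colimits is exactly what makes the reduction work, and where the representable case uses Proposition \ref{yolo} \textbf{(B)}. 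Without this step (or Pridham's alternative argument) your proposal does not close.
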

\noindent A very nice exposition of the above Theorem, with several examples and perspectives, is in To\"en's \cite{To}. 
\begin{Rema}
Lurie's work \cite{DAG-X} extends to $E_n$-deformation problems.
Pridham's work \cite{Pri} has some extension to the positive characteristic setting. 
In a forthcoming paper, Brantner and Mathew \cite{BrMa} actually generalize the above result over any field, proving that there is an equivalence of $\infty$-categories between formal moduli problems and so-called \textit{partition Lie algebras}. 
\end{Rema}
In these lectures, we will work in characteristic $0$, and our goal is to provide a version of Lurie--Pridham Theorem in families. In other words, we are aiming at first at a statement ``over a cdga $A$'', and build an extension from the affine case $X=Spec(A)$ to an arbitrary derived Artin stack $X$. One shall be very careful as there are different meanings to ``over a base''. We will provide two variants. The second one, which is the one we are interested in, will actually rather be named ``under a base''. 
\par \medskip
\fbox{\textit{Split families of formal moduli problems}}
\par \medskip
In the same year 1997, Kapranov \cite{Ka97-2} considered the family of \textit{all} formal neighborhood of points in a smooth algebraic variety $X$, which is nothing but the formal neighborhood $\hat{X\times X}$ of the diagonal in $X\times X$. He showed that the sheaf $T_X[-1]\simeq\mathbb{T}_{X/X\times X}$ is a Lie algebra object in $\mathrm{D}^{\mathrm{b}}(X)$, whose Chevalley--Eilenberg cdga gives back the structure sheaf of $\hat{X\times X}$. It is important to observe here that we have a kind of formal stack ($\hat{X\times X}$) that lives both over $X$ and under $X$. This example is a prototype for split families of formal moduli problems: a generalization of Lurie--Pridham Theorem has recently been proven by Benjamin Hennion \cite{He} in 2013, in the following form: 
\begin{Theo}[\cite{He}]
Let $A$ be a noetherian cdga concentrated in nonpositive degrees, and let $X$ be a derived Artin stack of finite presentation.
\begin{enumerate}
\item[--] $A$-pointed $A$-linear formal moduli problems are equivalent (as an $\infty$-category) to $A$-dg-Lie algebras. 
\item[--] $X$-pointed $X$-families of formal moduli problems are equivalent to Lie algebra objects in $QCoh(X)$.
\end{enumerate} 
\end{Theo}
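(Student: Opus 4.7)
The plan is to follow the Lurie--Pridham strategy, suitably relativized first over $A$ and then globalized over $X$. For the affine statement, one first sets up the relevant deformation context: the $\infty$-category $\mathbf{Calg}^{\mathrm{sm}}_A$ of ``small'' augmented $A$-cdgas $B \to A$, meaning those for which the augmentation fiber is connective, has finitely many non-trivial cohomology groups, and whose cohomology is finitely generated over $\mathrm{H}^0(A)$. An $A$-linear formal moduli problem is then a functor $F : \mathbf{Calg}^{\mathrm{sm}}_A \to \mathcal{S}$ sending $A$ to a point and transforming certain pullback squares of small cdgas (those whose maps induce surjections on $\pi_0$ modulo the augmentation) into pullback squares of spaces.

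Next, one constructs the relative Koszul duality adjunction. The Chevalley--Eilenberg cochain functor $CE_A : \mathbf{Lie}_A^{\mathrm{op}} \to \mathbf{Calg}^{\mathrm{aug}}_A$ admits a left adjoint $\mathfrak{D}_A$, obtained by mimicking the classical construction (shifted augmentation ideal, then free Lie algebra) while keeping track of the $A$-linear structure. One defines the candidate equivalence
\[
\Psi_A : \mathbf{Lie}_A \longrightarrow \mathbf{FMP}_A, \qquad \Psi_A(L)(B) = \mathrm{Map}_{\mathbf{Lie}_A}\!\bigl(\mathfrak{D}_A(B),\, L\bigr),
\]
and must check that $\Psi_A$ is an equivalence of $\infty$-categories. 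The key verifications, all patterned on the absolute case, are: (i) $\Psi_A$ commutes with sifted colimits; (ii) free $A$-dg-Lie algebras of the form $\mathrm{Free}_{\mathbf{Lie}_A}(A[n])$ (for $n \leq -1$) are sent to representable moduli problems corresponding to shifted square-zero extensions of $A$; and (iii) the tangent functor $\mathbb{T} : \mathbf{FMP}_A \to \mathbf{Mod}_A$ (evaluation on square-zero extensions, shifted) factors through $\Psi_A$ as the forgetful functor $\mathbf{Lie}_A \to \mathbf{Mod}_A$. Since every $A$-dg-Lie algebra is a sifted colimit of free ones on perfect $A$-modules, and every formal moduli problem is a sifted colimit of representables via its Postnikov/tangent filtration, these three facts force $\Psi_A$ to be an equivalence.

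The main obstacle lies in step (ii) together with the compatibility with sifted colimits: one needs to verify that $\mathfrak{D}_A$ sends small augmented $A$-cdgas to free Lie algebras on perfect connective $A$-modules, up to a controlled filtration. This is where the noetherian hypothesis on $A$ is essential, since it guarantees the finiteness properties that make the bar--cobar duality converge. In particular, one must upgrade the classical computation $\mathfrak{D}_A(A \oplus A[n]) \simeq \mathrm{Free}_{\mathbf{Lie}_A}(A[n-1])$ and then induct on the Postnikov tower of a general small $B$; this is essentially the $A$-linear analogue of Nuiten's argument, and the principal technical input in Hennion's proof.

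For the global statement over a derived Artin stack $X$ of finite presentation, one invokes smooth (or \'etale) descent. Both assignments $U \mapsto \mathbf{FMP}_U$ (for $U$ an affine derived scheme smooth over $X$) and $U \mapsto \mathbf{Lie}(QCoh(U))$ satisfy smooth hyperdescent, by definition of $X$-families and by quasi-coherent descent respectively. The functor $\Psi_A$ constructed above is natural in $A$ with respect to base change, because both $\mathfrak{D}$ and the $\mathrm{Map}$-formula are preserved by $-\otimes_A A'$. Choosing a smooth atlas $U_\bullet \to X$ with each $U_n$ a disjoint union of derived affines and glueing the affine equivalences along the \v{C}ech nerve yields the desired equivalence $\mathbf{FMP}_X \simeq \mathbf{Lie}(QCoh(X))$.
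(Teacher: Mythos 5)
Your overall route---a relative Chevalley--Eilenberg/Koszul duality adjunction $\mathfrak D_A \dashv \mathrm{CE}^{\bullet}_A$ over $A$, followed by descent along an atlas---is the same as the one followed here (and by Hennion). The genuine gap is in the step where you claim that facts (i)--(iii) ``force'' $\Psi_A$ to be an equivalence. The assertion that every formal moduli problem is a sifted colimit of representables via its Postnikov/tangent filtration is not available: in the abstract result (Theorem \ref{bomb}) one instead proves that $\Psi$ is conservative and admits a left adjoint, and then uses hypercoverings to reduce to \emph{pro}-representable problems, i.e.\ small \emph{limits} of representables; and even the representable case rests on the key duality statement that the counit $\mathfrak D_A\mathrm{CE}^{\bullet}_A(\mathfrak g)\to\mathfrak g$ is an equivalence on good $A$-dg-Lie algebras (equivalently, that $\mathfrak D_A$ and $\mathrm{CE}^{\bullet}_A$ exchange small augmented algebras and good Lie algebras, Proposition \ref{yolo}). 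That counit statement is precisely the technical heart of the theorem, and you only name it (``the $A$-linear analogue of Nuiten's argument'') without proving it. In the paper it is obtained by a transfer argument (Proposition \ref{suppositoire}) along a morphism of Koszul duality contexts relating $(\CDGA_{A/A},\LIE_A)$ to the module-level duality $\mathrm{Hom}_A(-,A)$ on $\MOD_A$, using very good models, the uncompleted Chevalley--Eilenberg algebra and a cotangent-complex/flatness computation as in Proposition \ref{opinel}; moreover the finiteness hypothesis that actually enters is the cohomological boundedness of $A$ (needed so that good Lie algebras have reflexive underlying $A$-modules), while noetherianity is superfluous---so your appeal to the noetherian hypothesis as what makes the bar--cobar duality ``converge'' misidentifies where a hypothesis on $A$ is really used.

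For the global statement the descent outline is plausible (and is essentially Hennion's strategy; the present paper does not reprove it), but the assertion carrying all the weight---that the affine equivalence is natural in $A$ because ``both $\mathfrak D$ and the $\mathrm{Map}$-formula are preserved by $-\otimes_A A'$''---is not automatic. The functor $\mathrm{CE}^{\bullet}_A$ is a completed, limit-type construction and does not commute with base change in general, so the equivalences $\mathfrak D_{A'}(B\otimes_AA')\simeq\mathfrak D_A(B)\otimes_AA'$, together with the preservation of small and good objects along (smooth) base change, require proof; this base-change compatibility is the main content of the globalization step. One must also justify that $U\mapsto$ ($U$-families of fmps) and $U\mapsto \mathrm{Lie}\big(\mathbf{QCoh}(U)\big)$ really have the limit descriptions over the atlas that your gluing argument presupposes.
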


\fbox{\textit{Formal moduli problems under a base}}
\par \medskip
In 2013, Caldararu, Tu and the first author \cite{CCT} looked at the formal neighborhood $\hat{Y}$ of a smooth closed subvariety $X$ into a smooth algebraic variety $Y$. They prove that the relative tangent complex $\mathbb{T}_{X/Y}$ is a dg-Lie algebroid whose Chevalley--Eilenberg cdga gives back the structure sheaf of $\hat{Y}$. Similar results had been previously proven by Bhargav Bhatt in a slightly different formulation (\textit{see} \cite{Bhatt}). We also refer to the work of Shilin Yu \cite{Yu}, who obtained parallel results in the complex analytic context. Note that this time, the derived scheme $\hat{Y}$ doesn't live anymore over $X$ (although it still lives \textit{under} $X$). Keeping this example in mind, we expect the following generalization of Hennion's result: 
\begin{enumerate}
\item[--] $A$-pointed $\kk$-linear formal moduli problems are equivalent to dg-Lie algebroids over $A$, which has recently been proven by Joost Nuiten in \cite{Nuit2}. This appears as Theorem \ref{thm-tooooop} of the present survey. 
\item[--] $X$-pointed formal moduli problems are equivalent to dg-Lie algebroids over $X$. A weaker version of this appears as Theorem \ref{thm-final} in the present paper. 
\end{enumerate}
\begin{Rema}
These results are somehow contained in the recent book \cite{GR1,GR2} of Gaitsgory--Rozenblyum, though in a slightly different formulation\footnote{Actually, Gaitsgory and Rozenblyum \textit{define} Lie algebroids as formal groupoids. Our results, as well as Nuiten's one, thus somehow indirectly give a formal exponentiation result for derived Lie algebroids. }. 
\end{Rema}
We finally observe that it would be very interesting to understand the results from \cite{Grigri}, describing the derived geometry of locally split first order thickenings $X\hookrightarrow S$ of a smooth scheme $X$, in terms of Lie algebroids on $X$. 

\subsection*{Description of the paper}

\begin{enumerate}
\item[\S 1] We present the basic notions necessary to understand Lurie--Pridham result, relate them to more classical constructions in deformation theory and provide some examples. We finally give a glimpse of Lurie's approach for the proof.
\item[\S 2] A general framework for abstract formal moduli problems is given in details. In \S 2.1, we recall Lurie's deformation contexts. In \S 2.2, we deal with dual deformation contexts. In \S 2.3, we introduce the useful notion of Koszul duality context, which is a nice interplay between a deformation context and a dual deformation context. In \S 2.4 we talk about morphisms between these, which is a rather delicate notion. In \S 2.5 we restate some results of Lurie using the notion of Koszul duality context, making his approach a bit more systematic. In \S 2.6 we discuss tangent complexes.  
\item[\S 3] We extend former results from dg-Lie algebras to dg-Lie algebroids. We prove Hennion's result \cite{He} in \S 3.1. Most of the material in \S 3.2, \S 3.3 and \S 3.4 happens to be already contained in the recent preprints \cite{Nuit1,Nuit2} of Joost Nuiten. \S 3.5 presents some kind of base change functor that will be useful for functoriality in the next Section. 
\item[\S 4] We explain how to globalize the results of \S 3. In \S 4.1 we introduce formal (pre)stacks and formal thickenings, mainly following \cite{GR1,GR2} and \cite{CPTVV}, and compare these with formal moduli problems. In \S 4.2 we state a consequence, for Lie algebroids, of the previous \S, and we sketch an alternative proof that is based on Koszul duality contexts and morphisms thereof. In \S 4.3 we show that formal thickenings of $X$ fully faithfully embed in Lie algebroids on $X$, and we conjecture that this actually is an equivalence. 
\end{enumerate}

\subsection*{Acknowledgements} 

The first author thanks Mathieu Anel, Benjamin Hennion, Pavel Safronov, and Bertrand To\"en for numerous discussions on this topic. 
This survey paper grew out of lecture notes, taken by the second author, of a 3 hours mini-course given by the first author at the session DAGIT of the \'Etats de la Recherche, that took place in Toulouse in June 2017. We both would like to thank the organizers for a wonderful meeting, as well as the participants to the lectures for their enthusiasm. 
\par \medskip
Damien Calaque acknowledges the financial support of the Institut Universitaire de France, and of the ANR grant ``SAT'' ANR-14-CE25-0008. Julien Grivaux acknowledges the financial support of the ANR grant ``MicroLocal'' ANR-15-CE40-0007, and ANR Grant ``HodgeFun'' ANR-16-CE40-0011. 

\subsection*{Credits}

\begin{itemize}
\item[--] \S1 actually doesn't contain more than the beginning of \cite{DAG-X}, and \S 2 is somehow a nice re-packaging of the general construction presented in \textit{loc.cit}. However, we believe the formalism of \S 2 is way more user-friendly for the reader wanting to apply \cite{DAG-X} in concrete situations.
\item[--] Even though we thought they were new at the time we gave these lectures, Joost Nuiten independently obtained results that contain and encompass the material that is covered in \S3 (\textit{see} the two very nice papers \cite{Nuit1,Nuit2}, that appeared while we were writing this survey). These results must be attributed to him. 
\item[--] Global results presented in \S 4 rely on the general theory of formal derived (pre)stacks from \cite{GR2} (see also \cite{CPTVV}). Some of these results seem to be new. 
\end{itemize}

\subsection*{Notation}
Below are the notation and conventions we use in this paper. 

\paragraph{Categories, model categories and $\infty$-categories}

\begin{enumerate}
\item[--] We make use all along of the language of $\infty$-categories. We will only use $(\infty,1)$-categories, that is categories where all $k$-morphisms for $k \geq 2$ are invertible. By $\infty$-categories, we will always mean $(\infty, 1)$-categories.
\item[--] Abusing notation, we will denote by the same letter an ordinary (i.e.~discrete) category and its 
associated $\infty$-category.
\item[--] If $M$ is a given model category, we write $\mathcal{W}_{M}$ for its subcategory of weak equivalences, 
and $\bold M:=M[\mathcal{W}_{M}^{-1}]$ for the associated $\infty$-category (unless otherwise specified, 
localization is always understood as the $\infty$-categorical localization, that is Dwyer-Kan simplicial localization). 
\item[--] Going from $M$ to $\bold M$ is harmless regarding (co)limits: homotopy (co)limits in $M$ correspond to $\infty$-categorical (co)limits in $\bold M$.
\item[--] Conversely, any presentable $\infty$-category can be strictified to a model category (\textit{i.e.} is of the form $\bold M$ for some model category $M$). In the whole paper, we will only deal with \textit{presentable} $\infty$-categories.
\item[--] Let $\CAT$ be the $\infty$-category of (small) $\infty$-categories. It can be obtained as the 
$\infty$-category associated with the model category $\QCat$ of (small) quasi-categories. 
\item[--] We denote by $\sset$ the model category of simplicial sets (endowed with Quillen's model structure). The associated $\infty$-category is denoted by $\SSET$, it is equivalent to $\infty Grpd$. We denote by $\SPA$ its stabilization (\textit{see} \S \ref{oups}), it is the category of spectra.
\item[--] When writing an adjunction horizontally (resp. vertically), we always write the left adjoint \textit{above} (resp. on the \textit{left}) and the right adjoint \textit{below} (resp. on the \textit{right}). This means that when we write an adjunction as $F:\bold C\,\begin{matrix}\longrightarrow \\[-0.3cm] \longleftarrow\end{matrix}\,\bold D:G$, $F$ is the left adjoint.
\end{enumerate}

\paragraph{Complexes}

\begin{enumerate}
\item[--] The letter $\kk$ will refer to a fixed field of characteristic zero. 
\item[--] The category $\mod_\kk$ is the model category of unbounded (cochain) complexes of $\kk$-modules from \cite{H} (it is known as the \textit{projective model structure}), and $\MOD_\kk$ is its associated $\infty$-category. For this model structure, weak equivalences are quasi-isomorphisms, and fibrations are componentwise surjective morphisms.
\item[--] The category $\mod_\kk^{\leq 0}$ is the model category of complexes of $\kk$-modules sitting in nonpositive degrees, and $\MOD_\kk^{\leq 0}$ is its associated $\infty$-category. For this model structure, fibrations are componentwise surjective morphisms in degree $\leq{-1}$.
\item[--] In the sequel, we will consider categories of complexes with an additional algebraic structure (like commutative differential graded algebras, or differential graded Lie algebras). They carry model structures for which fibrations and weak equivalences are exactly the same as the ones for complexes. It is such that the ``free-forget'' adjunction with $\mod_\kk$ (or, $\mod_\kk^{\leq0}$) is a Quillen adjunction. We refer the reader to the paper \cite{H} for more details. 
\end{enumerate}

\paragraph{Differential graded algebras}

\begin{enumerate}
\item[--] The category $\cdga_\kk$ denotes the model category of (unbounded) unital commutative differential graded $\kk$-algebras (that is commutative monoids in $\mod_\kk$), and $\CDGA_\kk$ denotes its associated $\infty$-category.
\item[--] A (unbounded) unital commutative differential graded $\kk$-algebra will be called a ``cdga''. 
\item[--] The category $\calg_\kk$ is the slice category $\cdga_{\kk/\kk}$, \textit{i.e.} the category of augmented $\kk$-algebras. It is equivalent to the category $\calgnu_\kk$ of non-unital commutative differential graded algebras. This equivalence is actually a Quillen equivalence. 
\item[--] The category $\cdga^{\leq 0}_\kk$ denotes the model category of cdgas over $\kk$ sitting in nonpositive degree.
The associated $\infty$-category is denoted by  $\CDGA^{\leq 0}_\kk$.
\item[--] For any $A\in\cdga_\kk$ we write $\mod_A$ for the model category of left $A$-modules, and $\MOD_A$ 
for its associated $\infty$-category. 
\item[--] Note that $\MOD_A$ is a \textit{stable} $\infty$-category. 
This can be deduced from the fact that $\mod_A$ is a triangulated dg category. The looping and delooping functors 
$\Omega_*$ and $\Sigma_*$ are then simply given by degree shifting $(*)[-1]$ and $(*)[1]$, respectively. 
\item[--] We define $\cdga_A$ is then the model category of $A$-algebras (commutative monoids in $\mod_A$), and $\CDGA_A$ is the 
corresponding $\infty$-category. 
\item[--] All these definitions have relative counterparts: if $A$ is a cdga and if $B$ is an object of $\mod_{A}$, then $\cdga_{A/B}$ is the slice category $(\cdga_A)_{/B}$ of relative $A$-algebras over $B$.
\end{enumerate}

\paragraph{Differential graded Lie algebras and $L_\infty$-algebras}

\begin{enumerate}
\item[--] We denote by $\lie_\kk$ the model category of (unbounded) diffential graded Lie algebras over $\kk$ (i.e. Lie algebra objects in $\mod_\kk$). The associated $\infty$-category is denoted by $\LIE_{\kk}$.
\item[--] A differential graded Lie algebra over $\kk$ will be called a ``dgla''. 
\item[--] Remark that $\LIE_\kk$ is equivalent to the localization of the category of $L_\infty$-algebras, with morphisms being $\infty$-morphisms, with respect to $\infty$-quasi-isomorphisms (\textit{see e.g.} \cite{Val}). 
\end{enumerate}


\section{Introduction to pointed formal moduli problems for commutative algebras}

\subsection{Small augmented algebras} \label{SAA}
For any dg-algebra $A$ and any $A$-module $M$, we can form the square zero extension of $A$ by $M$; we denote it by $A \oplus M$ where there is no possible ambiguity. We set first some crucial definitions for the rest of the paper: 
\begin{Def} Recall that $\CALG_\kk$ is the $\infty$-category of augmented cdgas.
\begin{itemize}
\item[--] A morphism in $\CALG_\kk$ is called \textit{elementary} if it is a pull-back\footnote{Since we are working in the $\infty$-categorical framework, pullback means homotopy pullback.} of $\kk \to \kk \oplus \kk[n]$ for some $n\geq1$, where $\kk \to \kk \oplus \kk[n]$ is the square zero extension of $\kk$ by $\kk[n]$.
\item[--] A morphism in $\CALG_\kk$ is called \textit{small} if it is a finite composition of elementary morphisms. 
\item[--] An object in $\CALG_\kk$ is called \textit{small} if the augmentation morphism $\epsilon \colon A \to \kk$ is small. 
\end{itemize}
\end{Def}
We denote by $\CALGSM_\kk$ the full\footnote{Hence we allow all morphisms in the category $\CALGSM_\kk$, not only small ones.} sub-$\infty$-category of small objects of $\CALG_\kk$. Small objects admits various concrete equivalent algebraic characterizations: 

\begin{Prop}[{\cite[Proposition 1.1.11 and Lemma 1.1.20]{DAG-X}}]\label{prop-referee}
An object $A$ of $\CALG_\kk$ is small if and only if the three following conditions hold: 
\begin{itemize}
\item[--] $H^n(A)=\{0\}$ for $n$ positive and for $n$ sufficiently negative. 
\item[--] All cohomology groups $H^n(A)$ are finite dimensional over $\kk$.
\item[--] $H^0(A)$ is a local ring with maximal ideal $\mathfrak{m}$, and the morphism $H^0(A)/\mathfrak{m} \to \kk$ is an isomorphism.
\end{itemize}
Moreover, a morphism $A \to B$ between small objects is small if and only if $H^0(A) \to H^0(B)$ is surjective. 
\end{Prop}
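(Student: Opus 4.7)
The plan is to prove the two directions of the biconditional separately, and then to handle the small-morphism criterion by the same techniques applied relatively.

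For necessity (small implies the three cohomological conditions), I would argue by induction on the length of a decomposition of $A$ into elementary morphisms $A = A_r \to A_{r-1} \to \cdots \to A_0 = \kk$. The base case $A = \kk$ is clear. For the inductive step, an elementary morphism $A_i \to A_{i-1}$ is obtained as a homotopy pullback of $\kk \to \kk \oplus \kk[n]$ with $n \geq 1$; passing to augmentation ideals produces a homotopy fiber sequence of underlying complexes $I_{A_i} \to I_{A_{i-1}} \to \kk[n]$ and hence a long exact sequence in cohomology. Since $\kk[n]$ contributes only a single one-dimensional class in degree $-n \leq -1$, each of the three conditions is propagated: cohomology remains bounded and finite-dimensional, and $H^0$ either is unchanged (when $n \geq 2$) or is extended by a square-zero one-dimensional ideal (when $n = 1$), preserving locality and the residue field.

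For sufficiency, I would induct on the total cohomological dimension $d(A) := \sum_i \dim_\kk H^i(A)$, which is finite by hypothesis. The base case $d(A) = 1$ forces $A \simeq \kk$ in $\CALG_\kk$. For the inductive step, one must exhibit $A$ as an elementary pullback $A = B \times_{\kk \oplus \kk[N+1]} \kk$ with $d(B) = d(A) - 1$. Let $-N \leq 0$ be the smallest integer with $H^{-N}(A) \neq 0$, and pick a nonzero class $\alpha \in H^{-N}(A)$. If $N \geq 1$, then $\alpha$ has trivial products with all negative-degree elements for degree reasons, so it behaves like a socle element; a standard construction produces a quotient $B$ of $A$ in $\CALG_\kk$ (in which a cocycle representative of $\alpha$ is killed) together with a map $B \to \kk \oplus \kk[N+1]$ whose obstruction class recovers $\alpha$, and the resulting homotopy pullback square gives back $A$. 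If $N = 0$, the hypotheses force $H^0(A)$ to be a finite-dimensional local Artinian $\kk$-algebra with nilpotent maximal ideal $\mathfrak{m}$; I would pick a non-zero socle element $x \in \mathrm{ann}_{H^0(A)}(\mathfrak{m}) \cap \mathfrak{m}$ and take $B := H^0(A)/(x)$. The quotient $H^0(A) \to B$ is then a square-zero extension of $B$ by $\kk$, which by cotangent-complex theory exhibits $H^0(A)$ as the homotopy pullback $B \times_{\kk \oplus \kk[1]} \kk$; lifting this presentation to $A$ (using finite-dimensionality and formality of the degree-zero truncation) gives the desired elementary reduction with $n = 1$.

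The morphism claim then follows by the same template: necessity is immediate from the cohomological long exact sequences of the elementary steps, since each such step induces a surjection on $H^0$ (either an isomorphism, or a surjection with one-dimensional nilpotent kernel), and compositions of surjections are surjections. For sufficiency, given $f : A \to B$ with $H^0(A) \to H^0(B)$ surjective, I would apply a relative version of the dimension induction by successively killing, via elementary pullbacks, the cohomology classes of the homotopy fiber of $f$, starting from the lowest degree. The main obstacle in the plan is the realization, in the case $N \geq 1$ above, of the class $\alpha$ as the obstruction of an actual map $B \to \kk \oplus \kk[N+1]$ in $\CALG_\kk$: one must produce the quotient $B$ and the morphism simultaneously, which requires either working with an explicit cofibrant model of $A$ and chain-level manipulations exploiting the socle property of $\alpha$, or extracting the required data abstractly from the derived mapping space $\mathrm{Map}_{\CALG_\kk}(-, \kk \oplus \kk[N+1])$ using smallness and connectivity to guarantee the existence of such a derivation with prescribed cohomology class.
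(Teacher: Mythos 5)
Your necessity direction and the ``only if'' half of the morphism criterion are fine: the fiber sequence $I_{A_i}\to I_{A_{i-1}}\to \kk[n]$ attached to each elementary step does propagate boundedness, finite-dimensionality, locality of $H^0$, and surjectivity on $H^0$. But note that the paper does not prove this proposition at all: it is quoted from \cite{DAG-X} (Proposition 1.1.11 and Lemma 1.1.20), and only the discrete artinian case --- your $N=0$ branch --- is sketched in Remark \ref{bottin} via the cone construction. The genuine gap is exactly where you flag it, and it is not a technicality but the mathematical heart of the statement. To exhibit $A$ as a pullback $B\times_{\kk\oplus\kk[N+1]}\kk$ you must produce $B$ \emph{and} the classifying map $B\to\kk\oplus\kk[N+1]$, i.e.\ realize $A\to B$ as a square-zero extension of $B$ by $\kk[N]$ classified by a derivation. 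Your proposed $B$, ``a quotient of $A$ in which a cocycle representative of $\alpha$ is killed,'' does not work as stated: killing a cocycle (by a cell attachment or a naive quotient) in general creates new cohomology in lower degrees, so there is no reason that $d(B)=d(A)-1$; and the existence of the classifying map is precisely the nontrivial input. The standard way to supply both is the theory of square-zero extensions of connective cdgas: the Postnikov truncation maps are square-zero extensions by $H^{-N}(A)[N]$ classified by the cotangent complex, and one factors them by filtering $H^{-N}(A)$ through a composition series of $H^0(A)$-modules, splitting off one copy of $\kk$ at a time. This is Lurie's argument, and your proposal contains no substitute for it.

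There is also a more local error in the $N\geq 1$ case: you take an \emph{arbitrary} nonzero $\alpha\in H^{-N}(A)$ and justify its socle-like behaviour by the vanishing of products with negative-degree classes, but that is not the relevant condition. If $A\to B$ is elementary, the fiber of $A\to B$ is $\kk[N]$ with the residue-field module structure, so the class of $H^{-N}(A)$ that gets killed is automatically annihilated by the maximal ideal $\mathfrak{m}\subset H^0(A)$; hence only classes in the socle of the $H^0(A)$-module $H^{-N}(A)$ can be split off, and $\alpha$ must be chosen there (such a class exists since the module is finite dimensional and $\mathfrak{m}$ is nilpotent) --- degree reasons alone say nothing about the $H^0$-action. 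The same two missing ingredients (the cotangent-complex classification of square-zero extensions and the socle choices) are also what your one-sentence treatment of the ``if'' half of the morphism criterion would need; in \cite{DAG-X} this is Lemma 1.1.20 and uses the same machinery applied relatively to $A\to B$.
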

\begin{Rem} \label{bottin}
Let us make a few comments on this statement, in order to explain its meaning and link it to classical results in commutative algebra and deformation theory.
\begin{itemize}
\item[--] Observe that small algebras are nothing but dg-artinian algebras concentrated (cohomologically) in nonpositive degree.
\item[--] To get a practical grasp to the definitions of elementary and small morphisms, it is necessary to be able to compute homotopy pullbacks in the model category $\calg_\kk$. This is a tractable problem since the model structure on $\cdga_\kk$ is fairly explicit, and $\calg_\kk$ is a slice category of $\cdga_\kk$.
\item[--] If a small object $A$ is concentrated in degree zero, the theorem says that $A$ is small if and only if $A$ is a local artinian algebra with residue field $\kk$. Let us explain concretely why this holds (the argument is the same as in the general case). If $A$ is a local artinian algebra, then $A$ can be obtained from the residue field as a finite sequence of (classical) small extensions, that is extensions of the form
\[
0 \rightarrow (t) \rightarrow R_2 \rightarrow R_1 \rightarrow 0
\]
where $R_1$, $R_2$ are local artinian with residue field $\kk$, and $(t)$ is the ideal generated by a single element $t$ annihilated by the maximal ideal of $R_2$ (hence it is isomorphic to the residue field $\kk$). In this way we get a cartesian diagram\footnote{The cdga structure we put on the cone is the natural one: we require that $(t)$ has square zero.}
\[
\xymatrix{
R_2 \ar[r]  \ar[d] & \kk \ar[d] \\
\mathrm{cone} \left( (t) \to R_2 \right) \ar[r] & \kk \oplus \kk[1]
}
\]
in $\calg_\kk$. Since the bottom horizontal map is surjective in each degree, it is in particular a fibration. Therefore this diagram is also cartesian in $\CALG_\kk$, and is isomorphic to a cartesian diagram of the form
\[
\xymatrix{
R_2 \ar[r]  \ar[d] & \kk \ar[d] \\
R_1  \ar[r] & \kk \oplus \kk[1]
}
\]
in $\CALG_\kk$. Hence the morphism $R_2 \to R_1$ is elementary.
\item[--] An important part of classical deformation theory in algebraic geometry is devoted to \textit{formal} deformations of algebraic schemes. For a complete account, we refer the reader to the book \cite{Sernesi}. Following the beginning of \cite{DAG-X}, we will explain quickly how small morphisms fit in this framework. Given a algebraic scheme $Z$ over $\kk$ (that will be assumed to be smooth for simplicity), the formal deformation theory of $Z$ deals with equivalence classes of cartesian diagrams
\[
\xymatrix{
Z \ar[r] \ar[d] & \mathfrak{Z} \ar[d] \\
Spec(\kk) \ar[r] & Spec(A)
}
\]
where $A$ is a local artinian algebra with residue field $k$. This construction defines a deformation functor $\mathrm{Def}_Z$ from the category of local artinian algebras to sets.
\par \medskip 
It is possible to refine the functor $\mathrm{Def}_Z$ to a groupoid-valued functor that associates to any $A$ the groupoid of such diagrams. Since any groupoid defines a homotopy type (by taking the nerve), we get an enriched functor $\mathcal{D}ef_Z$ from local artinian algebras to the homotopy category of topological spaces, such that $\pi_0 (\mathcal{D}ef_Z)=\mathrm{Def}_Z$.
\par \medskip
The first important case happens when $A=\kk[t]/t^2$. In this case, Kodaira-Spencer theory gives a bijection between isomorphism classes of deformations of $X$ over $Spec (\kk[t]/t^2)$ and the cohomology group $\mathrm{H}^1(Z, \mathrm{T}_Z)$. In other words, $\mathrm{H}^1(Z, \mathrm{T}_Z)$ is the tangent space to the deformation functor $\mathrm{Def}_Z$. The next problem of the theory is the following: \textit{when can an infinitesimal deformation $\theta$ in $\mathrm{H}^1(Z, \mathrm{T}_Z)$ be lifted to $Spec(k[t]/t^3)$}? The answer is: exactly when $[\theta, \theta]$ vanishes in $\mathrm{H}^2(Z, \mathrm{T}_Z)$. This can be interpreted in the framework of derived algebraic geometry as follows: the extension
\[
0 \to (t^2) \to \kk[t]/t^3 \to \kk[t]/t^2 \to 0
\]
yields a cartesian diagram
\[
\xymatrix{
\kk[t]/t^3 \ar[r]  \ar[d] & \kk \ar[d] \\
\kk[t]/t^2 \ar[r] & \kk \oplus \kk[1]
}
\]
in $\CALG_\kk$. This gives a fiber sequence of homotopy types
\[
\mathcal{D}ef_Z (\kk[t]/t^3) \to \mathcal{D}ef_Z (\kk[t]/t^2)  \to  \mathcal{D}ef_Z (\kk \oplus \kk[1])
\]
hence a long exact sequence
\[
\cdots\rightarrow \pi_0(\mathcal{D}ef_Z (\kk[t]/t^3)) \rightarrow \pi_0 (\mathcal{D}ef_Z (\kk[t]/t^2)) \rightarrow \pi_0 (\mathcal{D}ef_Z (\kk \oplus \kk[1]).
\]
It turns out that the set $\pi_0 (\mathcal{D}ef_Z (\kk \oplus \kk[1])$ of equivalence classes of deformations of $Z$ over the derived scheme $Spec(\kk \oplus \kk[1])$ is isomorphic to $\mathrm{H}^2(Z, \mathrm{T}_Z)$. Hence the obstruction class morphism
\begin{eqnarray*}
\mathrm{Def}_Z(\kk[t]/t^2)\cong \mathrm{H}^1(Z, \mathrm{T}_Z) & \longrightarrow & \mathrm{H}^2(Z, \mathrm{T}_Z) \\
\theta & \longmapsto & [\theta, \theta]
\end{eqnarray*}
can be entirely understood by writing $\kk[t]/t^3 \to \kk[t]/t^2$ as an elementary morphism.
\end{itemize}
\end{Rem}

\subsection{The $\infty$-category of formal moduli problems}
We start by introducing formal moduli problems in the case of cdgas:
\begin{Def}
A \textit{formal moduli problem} (we write \textit{fmp}) is an $\infty$-functor $X: \CALGSM_{\kk} \to \SSET$ satisfying the following two properties: 
\begin{itemize}
\item[--]$X(\kk)$ is contractible.
\item[--] $X$ preserves pull-backs along small morphisms.
\end{itemize}
\end{Def}
The second condition means that given a cartesian diagram
\[
\xymatrix{
N \ar[r] \ar[d] & A \ar[d] \\
M \ar[r] & B
}
\]
in $\CALGSM_{\kk}$ where $A \to B$ is small, then 
\[
\xymatrix{
X(N) \ar[r] \ar[d] & X(A) \ar[d] \\
X(M) \ar[r] & X(B)
}
\]
is cartesian. 
\begin{Rem}
Observe that the second condition is stable under composition and pullback. Hence it is equivalent to replace in this condition small morphisms with elementary morphisms. We claim that we can even replace elementary morphisms with the particular morphisms $\kk \to \kk\oplus \kk[n]$ for every $n\geq 1$. Indeed, consider a cartesian diagram
\[
\xymatrix{
N \ar[d] \ar[r] & A \ar[d]^-{f}\\
M \ar[r] & B
}
\]
where $f$ is elementary, that is given by a cartesian diagram
\[
\xymatrix{
A \ar[d]_-{f} \ar[r] & \kk \ar[d] \\
B \ar[r] & \kk \oplus \kk[n]
}
\]
If we look at the diagram 
\[
\xymatrix{
X(N) \ar[r] \ar[d] & X(A) \ar[r] \ar[d] & X(*) \ar[d] \\
X(M) \ar[r] & X(B) \ar[r] & X(\kk \oplus \kk[n])
}
\]
and assume that $X$ preserves pullbacks along the morphisms $\kk \rightarrow \kk \oplus \kk[n]$, then the right square is cartesian, so the left square is cartesian if and only if the big square is cartesian (which is the case).
\end{Rem}
\begin{Cor}
A functor $X: \CALGSM_\kk \to \SSET$ is a fmp if and only if $X(k)$ is contractible and preserves pull-backs whenever morphisms in the diagram are surjective on $H^0$. 
\end{Cor}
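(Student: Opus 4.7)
The plan is to deduce the corollary directly from the second assertion of Proposition \ref{prop-referee}, which characterizes small morphisms between small objects as precisely those morphisms that are surjective on $H^0$. Since every object of $\CALGSM_\kk$ is small by definition, any morphism in $\CALGSM_\kk$ is a morphism between small objects, so within this $\infty$-category the class of small morphisms coincides with the class of morphisms that are surjective on $H^0$. Thus the pullback-preservation condition appearing in the definition of an fmp is literally the same as the condition stated in the corollary, and the equivalence is immediate.

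To make this rigorous, I would proceed as follows. First, I would fix a functor $X : \CALGSM_\kk \to \SSET$ with $X(\kk)$ contractible and unpack what the two conditions say. In both directions, I consider a pullback square
\[
\xymatrix{
N \ar[r] \ar[d] & A \ar[d]^-{f} \\
M \ar[r] & B
}
\]
in $\CALGSM_\kk$ and apply the criterion of Proposition \ref{prop-referee} to $f$: the morphism $f$ is small if and only if $H^0(f)$ is surjective. Hence preserving pullbacks along all small morphisms is logically equivalent to preserving pullbacks along all morphisms surjective on $H^0$.

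The only mild subtlety is to ensure that the notion of pullback diagram used in the two formulations agrees; this is not really an issue because pullbacks in $\CALGSM_\kk$ along small (equivalently, $H^0$-surjective) morphisms agree with pullbacks taken in the ambient $\CALG_\kk$, as follows from the explicit construction of square-zero extensions and the fact that the full sub-$\infty$-category of small objects is closed under such pullbacks (this is part of the content of Proposition \ref{prop-referee} and is implicit in the defining iterative construction of small objects via elementary morphisms). Once this is noted, the corollary is nothing but a change of labeling.

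There is no genuine obstacle here: the work is packaged entirely inside Proposition \ref{prop-referee}. The harder part, namely proving that $H^0$-surjectivity suffices to guarantee that a morphism between small objects can be written as a finite composition of elementary morphisms (i.e.\ pullbacks of $\kk \to \kk \oplus \kk[n]$), is already absorbed into the statement of that proposition. The corollary simply records its translation in terms of the definition of a formal moduli problem.
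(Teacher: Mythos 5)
There is a genuine gap: you treat the two pullback-preservation conditions as ``literally the same'', but they are not, and the discrepancy is exactly where the content of the Corollary lies. In the definition of a fmp, $X$ must preserve the pullback of a cospan $M \to B \leftarrow A$ as soon as the \emph{single} leg $A \to B$ is small; no hypothesis at all is placed on the other leg $M \to B$, which in general is \emph{not} surjective on $H^0$. The hypothesis of the Corollary, by contrast, only guarantees preservation of those squares in which the morphisms of the diagram are $H^0$-surjective. So Proposition \ref{prop-referee} (small $=$ $H^0$-surjective between small objects) identifies the classes of \emph{morphisms}, but not the classes of \emph{squares}: the implication ``Corollary hypothesis $\Rightarrow$ fmp'' does not follow by relabeling, since you must still handle cospans whose second leg fails to be $H^0$-surjective. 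Your argument, as written, only yields the easy direction (fmp $\Rightarrow$ the stated condition), which is indeed immediate from the last sentence of Proposition \ref{prop-referee}.

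The missing ingredient is the reduction carried out in the Remark preceding the Corollary: by stability of the fmp condition under composition and pasting of cartesian squares, it suffices that $X$ preserve pullbacks of the particular cospans $M \to \kk\oplus\kk[n] \leftarrow \kk$ with $n\geq 1$ and $M$ an arbitrary small object. For these cospans the hypothesis of the Corollary \emph{does} apply, because $H^0(\kk\oplus\kk[n])=\kk$ and every object of $\CALGSM_\kk$ is augmented, so the structural map $M \to \kk\oplus\kk[n]$ (as well as $\kk \to \kk\oplus\kk[n]$) is automatically surjective on $H^0$. Without this reduction and the use of the augmentation, the ``if'' direction is not established; with it, the proof is exactly the one given in the paper. (Your side remark about pullbacks of small objects staying in $\CALGSM_\kk$ is fine, but it does not address this point.)
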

\begin{proof}
Assume that $X$ preserves pull-backs whenever morphisms in the diagram are surjective on $H^0$. 
Consider a cartesian diagram of the following type, with $n\geq1$: 
\[
\xymatrix{
N \ar[r] \ar[d] & \kk \ar[d] \\
M \ar[r] & \kk \oplus \kk[n]
}
\]
Since $M$ is an augmented $\kk$-algebra, the map $M \to \kk \oplus \kk[n]$ is surjective on $H^0$. Hence 
\[
\xymatrix{
X(N) \ar[r] \ar[d] & X(\kk) \ar[d] \\
X(M) \ar[r] & X(\kk \oplus \kk[n])
}
\]
is cartesian. According to the preceding remark, this implies that $X$ is a fmp.

The reverse implication is obvious thanks to the last sentence of Proposition \ref{prop-referee}. 
\end{proof}

We write $\bold{FMP}_{\kk}$ for the full sub $\infty$-category of $Fun(\CALGSM_{\kk}, \SSET)$ consisting of formal moduli problems. 

\subsection{A glimpse at the description of $\bold{FMP}_{\kk}$}

In this section, we explain some heuristical aspects of the proof of the following theorem:
\begin{Thm}[\cite{DAG-X}, \cite{Pri}] \label{boss}
There is an equivalence of $\infty$-categories $\LIE_{\kk} \to \bold{FMP}_{\kk}$. 
\end{Thm}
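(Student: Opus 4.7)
The plan is to build an adjunction between $\LIE_\kk$ and a model for $\bold{FMP}_\kk$ arising from the Chevalley--Eilenberg/Koszul duality between commutative and Lie, and then to upgrade it to an equivalence. Let $C^\ast: \lie_\kk^{\mathrm{op}} \to \calg_\kk$ be the Chevalley--Eilenberg cochain functor sending $\mathfrak{g}$ to the completed symmetric algebra $\widehat{\mathrm{Sym}}(\mathfrak{g}^\vee[-1])$ with the usual Chevalley differential. It admits a left adjoint $\mathfrak{D}: \calg_\kk \to \lie_\kk^{\mathrm{op}}$, built from Harrison homology of the augmentation ideal. On the $\infty$-categorical level I define
\[
\Psi: \LIE_\kk \longrightarrow \mathrm{Fun}(\CALGSM_\kk, \SSET), \qquad \Psi(\mathfrak{g})(A) := \mathrm{Map}_{\CALG_\kk}\bigl(A, C^\ast(\mathfrak{g})\bigr) \simeq \mathrm{Map}_{\LIE_\kk}\bigl(\mathfrak{D}(A), \mathfrak{g}\bigr),
\]
the two descriptions being exchanged by the Koszul adjunction.

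The first task is to check that $\Psi(\mathfrak{g})$ lies in $\bold{FMP}_\kk$ and to compute its tangent complex. Obviously $\Psi(\mathfrak{g})(\kk) \simeq \ast$. By the corollary above, preservation of pullbacks reduces to the special squares with corner $\kk \to \kk \oplus \kk[n]$ for $n \geq 1$. The key computation is that $\mathfrak{D}(\kk \oplus \kk[n])$ is equivalent to the free dg-Lie algebra on $\kk[n-1]$; this follows from the bar/cobar picture applied to the square-zero augmentation ideal $\kk[n]$. Consequently $\Psi(\mathfrak{g})(\kk \oplus \kk[n]) \simeq \Omega^\infty(\mathfrak{g}[n])$, so the distinguished pullbacks become fiber sequences of loop spaces on the Lie side (and are therefore automatic), and the tangent complex of $\Psi(\mathfrak{g})$ is $\mathfrak{g}[1]$.

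For fully faithfulness, I would first treat free dg-Lie algebras $\mathrm{Free}(V)$ on a finite-dimensional complex $V$. For such sources, $\mathrm{Map}_{\LIE_\kk}(\mathrm{Free}(V), \mathfrak{h}) \simeq \mathrm{Map}_{\MOD_\kk}(V, \mathfrak{h})$ can be read off, on the $\bold{FMP}_\kk$ side, from the natural transformation $\Psi(\mathrm{Free}(V)) \to \Psi(\mathfrak{h})$ evaluated on the appropriate $\kk \oplus \kk[n]$, using the tangent complex computation of the previous step. Since free dg-Lie algebras on finite-dimensional complexes generate $\LIE_\kk$ under sifted colimits, and since $\Psi$ preserves sifted colimits in the source (a direct check using that $C^\ast$ sends sifted colimits of Lie algebras to cofiltered limits of cdgas and that small augmented cdgas are, in the relevant sense, compact), fully faithfulness propagates from these generators to all of $\LIE_\kk$.

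Essential surjectivity is the main obstacle. Given an fmp $X$, the tangent complex construction of \S 2.6 produces only a spectrum $T_X[-1]$, and one must simultaneously lift it to a dg-Lie algebra and identify $X$ with $\Psi$ applied to that lift. The cleanest route is the one formalized in \S 2: verify that $(\CALG_\kk, \LIE_\kk, C^\ast, \mathfrak{D})$ constitutes a Koszul duality context whose two tangent $\infty$-categories are identified with $\MOD_\kk$, and then apply the abstract machinery to conclude that $\Psi$ is an equivalence. Concretely, one uses $\mathfrak{D}$ to exhibit $\bold{FMP}_\kk$ as the $\infty$-category of coalgebras in $\MOD_\kk$ over the comonad $\mathfrak{D}C^\ast$, and identifies these with $\LIE_\kk$. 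The genuinely hard input --- and where characteristic zero is essential --- is this last identification: it amounts to saying that $C^\ast$ is comonadic, i.e.\ that dg-Lie algebras are reconstructed from their Chevalley--Eilenberg cdgas via the bar/cobar duality between the commutative and Lie operads. This is exactly the sharpness of the operadic Koszul duality $\mathrm{Com}^! \simeq \mathrm{Lie}\{-1\}$ in characteristic zero, and its failure in positive characteristic is precisely the reason why Brantner--Mathew must replace dg-Lie algebras by partition Lie algebras.
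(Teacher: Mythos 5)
Your functor $\Psi$ is the same as the paper's $\Delta$, and the computation of $\mathfrak{D}$ on trivial square-zero extensions is the right starting point (modulo shifts: one has $\mathfrak{D}(\kk\oplus\kk[n])\simeq \textit{free}\,\kk[-n-1]$, so $\Psi(\mathfrak{g})(\kk\oplus\kk[n])\simeq\Omega^{\infty}(\mathfrak{g}[n+1])$ and the tangent complex is $\mathfrak{g}[1]$, not $\mathfrak{g}[n]$/$\mathfrak{g}$ as written). But the assertion that the fmp condition is then ``automatic'' is not correct: you must show that $\mathrm{Map}_{\LIE_\kk}(\mathfrak{D}(-),\mathfrak{g})$ carries the pullback square $N=M\times_{\kk\oplus\kk[n]}\kk$ of small cdgas to a pullback of spaces, i.e.\ that $\mathfrak{D}$ sends this \emph{limit} square to a pushout in $\LIE_\kk$. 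Since $\mathfrak{D}$ is a left adjoint this is not formal; it is exactly Proposition \ref{yolo} \textbf{(D)}, which in turn rests on the genuinely hard input, Proposition \ref{opinel}: the counit $\mathfrak{D}\,\mathrm{CE}^{\bullet}(\mathfrak{g})\to\mathfrak{g}$ is an equivalence for \emph{good} $\mathfrak{g}$, giving the equivalence between small cdgas and good dglas.

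The more serious gap is in your essential surjectivity step, which you reduce to ``$C^{\ast}$ is comonadic, i.e.\ dg-Lie algebras are reconstructed from their Chevalley--Eilenberg cdgas.'' That statement is false for general unbounded dglas even in characteristic zero: the counit of $(\mathfrak{D},\mathrm{CE}^{\bullet})$ is an equivalence only on the restricted class of good objects (finite cell dglas with generators in degrees $\geq 2$), and $\mathrm{CE}^{\bullet}$ is neither fully faithful nor comonadic on all of $\LIE_\kk^{op}$ --- this is precisely why the theorem is an equivalence with $\bold{FMP}_{\kk}$ rather than with a subcategory of $\CALG_\kk$ (also note the comonad $\mathfrak{D}\,\mathrm{CE}^{\bullet}$ lives on $\LIE_\kk^{op}$, not on $\MOD_\kk$, and identifying $\bold{FMP}_\kk$ with comonad coalgebras is itself a theorem of comparable depth, not something the adjunction gives for free). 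What the machinery of \S 2 actually needs is (a) the counit equivalence on good objects (Proposition \ref{opinel}, proved via very good models, the uncompleted Chevalley--Eilenberg algebra and a cotangent-complex computation), and (b) that $\Theta\colon\LIE_\kk\to\SPA$ is conservative and preserves sifted colimits; these feed into Theorem \ref{bomb}, whose essential-surjectivity argument constructs a left adjoint $\Phi$ and uses hypercoverings/pro-representability, not Barr--Beck for $C^{\ast}$. Your fully-faithfulness-by-generators step has a related unproved kernel: the preservation of sifted colimits by $\Psi$ amounts to $\mathrm{Map}_{\LIE_\kk}(\mathfrak{D}(A),-)$ commuting with sifted colimits for $A$ small (not to compactness of small cdgas), and computing $\mathrm{Map}_{\bold{FMP}_\kk}\bigl(\Psi(\textit{free}(V)),\Psi(\mathfrak{h})\bigr)$ for an arbitrary finite-dimensional $V$ (as opposed to $V=\kk[-n-1]$, $n\geq1$, where $\textit{free}(V)$ is good) does not follow from the tangent-complex computation alone. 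As it stands, the proposal assumes at these two points essentially what is to be proved.
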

\begin{Rem} Again, we discuss various points in this theorem related to more classical material.
\begin{itemize}
\item[--] Let us first give a naive idea about how the $\infty$-functor can be defined on a ``sufficiently nice'' dgla. The procedure is rather classical; we refer the reader to \cite{HiSc1} and \cite{HiSc2} for further details. If $\mathfrak{g}$ is a dgla of over $\kk$, we can consider its (discrete) Maurer--Cartan set
\[
\mathrm{MC}(\mathfrak{g})=\{ x \in \mathfrak{g}^1 \,\, \textrm{such that}\,\, dx+[x, x]_{\mathfrak{g}}=0 \}.
\]
The ``good'' object attached to $\mathfrak{g}$ is the quotient of $\mathrm{MC}(\mathfrak{g})$ under gauge equivalence. This can be better formulated using a simplicial enrichment as follows: for any $n \geq 0$, let $\Omega^{\bullet}(\Delta_n)$ be the cdga of polynomial differential forms on the $n$-simplex $\Delta_n$. The collection of the $\Omega^{\bullet}(\Delta_n)$ defines a simplicial cdga. Then we define the simplicial set $\mathscr{MC}(\mathfrak{g})$ as follows\footnote{The $\pi_0$ of $\mathscr{MC}(\mathfrak{g})$ is exactly the quotient of $\mathrm{MC}(\mathfrak{g})$ under gauge equivalence.}: 
\[
\mathscr{MC}(\mathfrak{g})_n=\mathrm{MC}(\mathfrak{g} \otimes \Omega^{\bullet}(\Delta_n)).
\]
This being done, we can attach to $\mathfrak{g}$ a deformation functor $\mathrm{Def}_{\mathfrak{g}} \colon \CALGSM \to \SSET$ defined by
\[
\mathrm{Def}_{\mathfrak{g}}(A)=\mathscr{MC}(\mathfrak{g} \otimes_{\kk} \mathrm{I}_A)
\]
where $I_A$ is the augmentation ideal of $A$. This defines (again in good cases) a formal moduli problem.
\item[--] \raisebox{0.07cm}{\danger} The main problem of the Maurer--Cartan construction is that the functor $\mathfrak{g} \rightarrow \mathrm{Def}_{\mathfrak{g}}$ does not always preserve weak equivalences. For dglas that satisfy some extra conditions (like for instance nipoltence conditions), $\mathrm{Def}_{\mathfrak{g}}$ will be exactly the fmp we are seeking for. We will see very soon how Lurie circumvents this problem using the Chevalley--Eilenberg complex.
\item[--] To illustrate an example where the Maurer--Cartan construction appears, let us come back to deformation theory of algebraic schemes in a slightly more differential-geometric context: instead of algebraic schemes we deform compact complex manifolds. We can attach to a complex compact manifold $Z$ the Dolbeault complex of the holomorphic tangent bundle $\mathrm{T}_Z$, which is the complex
\[
0 \rightarrow \mathscr{C}^{\infty}(\mathrm{T}_Z) \xrightarrow{\overline{\partial}} \mathscr{A}^{0, 1}(\mathrm{T}_Z) \xrightarrow{\overline{\partial}} \cdots
\]
We see this complex as a dgla over $\mathbb{C}$, the Lie structure being given by the classical Lie bracket of vector fields and the wedge product on forms. Then it is well known (\textit{see e.g.} \cite[Lemma 6.1.2]{Huybrechts}) that deformations of $Z$ over an artinian algebra $A$ yield $\mathrm{I}_A$-points in the set-valued deformation functor associated with the dgla $(\mathscr{A}^{0, \bullet}(\mathrm{T}_Z), \overline{\partial})$. 
\end{itemize}
\end{Rem}
Let us now explain the good construction of the equivalence from  $\LIE_{\kk}$ to $\bold{FMP}_{\kk}$. First we start by recalling the following standard definitions:
\begin{Def} For any dgla $\mathfrak{g}$, we define the homological and cohomological Chevalley--Eilenberg complexes $\mathrm{CE}_{\bullet}(\mathfrak{g})$ and $\mathrm{CE}^{\bullet}(\mathfrak{g})$ as follows: 
\begin{itemize}
\item[--] As a graded vector space, $\mathrm{CE}_{\bullet}(\mathfrak{g})=\mathrm{S}\,(\mathfrak{g}[1])$ is the (graded) symmetric algebra of $\mathfrak{g}[1]$. The differential is obtained by extending, as a degree $1$ graded coderivation, the sum ot the differential $\mathfrak{g}[1]\to\mathfrak{g}[2]$ with the Lie bracket $\mathrm{S}^2 (\mathfrak{g}[1])\to\mathfrak{g}[2]$. Jacobi identity and Leibniz rule ensure that this coderivation squares to zero. 
The complex $\mathrm{CE}_{\bullet}(\mathfrak{g})$ is actually an (coaugmented, counital, and conilpotent) cocommutative coalgebra object in the category of complexes.
\item[--] $\mathrm{CE}^{\bullet}(\mathfrak{g})$ is the linear dual of $\mathrm{CE}_{\bullet}(\mathfrak{g})$, it is an augmented cdga.
\end{itemize}
\end{Def}
\begin{Rem} \label{muraillette}
~
\begin{itemize}
\item[--] Observe that the above definition still makes sense for an $L_\infty$-algebra $\mathfrak{g}$. Indeed, an $L_\infty$-algebra structure on $\mathfrak{g}$ is defined as a degree $1$ graded codifferential that makes $\mathrm{S}\,(\mathfrak{g}[1])$ a coaugmented counital cocommutative differential graded coalgebra. 
\item[--] It is possible to prove that $\mathrm{CE}_{\bullet} (\mathfrak{g})\simeq \kk \overset{\mathbb{L}}{\otimes}_{\mathrm{U}(\mathfrak{g})} \kk$ and $\mathrm{CE}^{\bullet}(\mathfrak{g}) \simeq \mathrm{RHom}_{\mathrm{U}(\mathfrak{g})}(\kk, \kk)$. 
\item[--] Let $V$ be an object of $\mod_{\kk}$, and let $\textit{free}\, (V)$ be the free dgla generated by $V$. Then $\mathrm{CE}_{\bullet}(\textit{free}\, (V))$ and $\mathrm{CE}^{\bullet}(\textit{free}\, (V))$ are quasi-isomorphic to the square zero extensions $\kk \oplus V[1]$ and $\kk \oplus V^*[-1]$ respectively.
\item[--] 
For any cdga $A$ and any dgla $\mathfrak{g}$ of finite dimension over $\kk$, there is a map
\begin{equation} \label{plot}
\mathrm{Hom}_{\cdga_{\kk}}(\mathrm{CE}^{\bullet}(\mathfrak{g}), A) \rightarrow \mathrm{MC}(\mathfrak{g} \otimes I_A)
\end{equation}
which is in good cases an isomorphism (where $I_A$ is the augmentation ideal of $A$). To see how the map is constructed, let $\varphi$ be an element in $\mathrm{Hom}_{\cdga_{\kk}}(\mathrm{CE}^{\bullet}(\mathfrak{g}), A)$. Forgetting the differential, it defines an algebra morphism from the completed algebra $\widehat{\mathrm{S}} \,(\mathfrak{g}^*[-1])$ to $A$, and in fact to $I_A$ (since the morphism is pointed).
In particular, we have a map $\phi \colon \mathfrak{g}^*[-1] \rightarrow I_A$, and since $\mathfrak{g}$ is finite-dimensional, we can see the morphism $\phi$ as a map $\kk[-1] \rightarrow \mathfrak{g} \otimes I_A$, hence an element $x$ of $(\mathfrak{g} \otimes I_A)^{1}$. Now we have a commutative diagram
\[
\xymatrix{
\mathfrak{g}^*[-1] \ar[r]^-{\phi} \ar[d]^-{[\,.\, , \, . \,]^*} & A \ar[r]^-{d_A} &A \\
\mathfrak{g}^*[-1] \otimes_{\kk} \mathfrak{g}^*[-1] \ar[rr]^-{\phi \otimes \phi} && A \otimes_{\kk} A \ar[u]^-{\star_A}
}
\]
Unwrapping what it means expressing $\phi$ with $x$, we end up exactly with the Maurer--Cartan equation $dx+[x, x]=0$. The reason why the map \eqref{plot} is not always bijective is that $\mathrm{CE}^{\bullet}(\mathfrak{g})$ is not the symmetric algebra of $\mathfrak{g}^*[-1]$, but it is the \textit{completed} symmetric algebra. 
\item[--] There is a simplicially enrichement of \eqref{plot}, given by 
\[
\mathrm{Hom}_{\CDGA_{\kk}}(\mathrm{CE}^{\bullet}(\mathfrak{g}), A) \rightarrow \mathscr{MC}(\mathfrak{g} \otimes I_A). 
\]
Apart from the completion issue that we have already discussed, this morphism may not be an equivalence as $\mathrm{CE}^{\bullet}(\mathfrak{g})$ may not be a cofibrant object in $\cdga_{\kk}$.
\end{itemize}
\end{Rem}
\begin{proof}[Sketch of the construction of the equivalence]
~
\begin{itemize}
\item[--] The Chevalley--Eilenberg construction preserves weak equivalences, hence defining an $\infty$-functor
\[
\mathrm{CE}^{\bullet} \,\colon \LIE_{\kk}^{op} \rightarrow \CALG_{\kk}. 
\]
The functor $(\mathrm{CE}^{\bullet})^{op}$ commutes with small colimits (\textit{see} \cite[Proposition 2.2.17]{DAG-X}), so since $\LIE_{\kk}$ is presentable, $\mathrm{CE}^{\bullet}$ admits a left adjoint. We call this adjoint $\mathfrak{D}$. Hence we have an adjunction
\[
\mathfrak{D} \colon \CALG_{\kk} \adjointHA \LIE_{\kk}^{op} \colon \mathrm{CE}^{\bullet}
\]
that can be seen as some version of Koszul duality. The main point in this step is that the Chevalley--Eilenberg functor does only commute with small \textit{homotopy} limits, not usual small limits. Hence the adjoint functor is only defined in the $\infty$-categorical setting (\textit{i.e.} it does not come from a Quillen adjunction).
\item[--] 
We define an $\infty$-functor from $\LIE_{\kk}$ to $\mathbf{Fun}(\CALG_{\kk}, \SSET)$ as follows:
\[
\Delta (\mathfrak{g})= \mathrm{Hom}_{\LIE_{\kk}^{op}}\big(\mathfrak{g}, \mathfrak{D}(-)\big)=\mathrm{Hom}_{\LIE_{\kk}}\big(\mathfrak{D}(-), \mathfrak{g}\big).
\]
The functor $\Delta$ will define the equivalence we are seeking for.
\item[--] Let us explain why $\Delta$ factors through $\bold{FMP}_{\kk}$. We introduce the notion of \textit{good} object: a dg-Lie algebra $\mathcal{L}$ is good if there exists a finite chain
$0=\mathcal{L}_0 \rightarrow \mathcal{L}_1 \rightarrow \ldots \rightarrow \mathcal{L}_n=\mathcal{L}$ such that each of these morphisms appears in a pushout diagram
\[
\xymatrix{\textit{free}\, \kk[-n_i-1] \ar[r]\ar[d] & \mathcal{L}_i \ar[d] \\
\{0\} \ar[r] & \mathcal{L}_{i+1}
}
\]
in $\LIE_{\kk}$, or equivalently a pullback diagram
\[
\xymatrix{ \mathcal{L}_{i+1} \ar[r]\ar[d] & \{0\} \ar[d] \\
\mathcal{L}_i \ar[r] &\textit{free}\,\, \kk[-n_i-1] 
}
\]
in $\LIE_{\kk}^{op}$. We denote by $\LIE_{\kk}^{gd}$ the full subcategory of $\LIE_{\kk}^{op}$ consisting of good objects. We see that good objects are formally the same as small ones in $\LIE_{\kk}^{op}$, using the sequence of objects $\textit{free}\, \kk[-n-1]$ instead of $\kk \oplus \kk[n]$. This will be formalized using the various notions of deformation contexts developed in the next section.
\item[--] The next step consists of proving that if $\mathfrak{g}$ is good, the counit morphism $\mathfrak{D} \mathrm{CE}^{\bullet}(\mathfrak{g}) \rightarrow \mathfrak{g}$ in $\LIE_{\kk}^{op}$ is an equivalence. This is the crucial technical input and will be proved in Proposition \ref{opinel}. By purely formal arguments (\textit{see} Proposition \ref{yolo}), this implies that the adjunction $\mathfrak{D} \adjointHA \mathrm{CE}^{\bullet}$ defines an equivalence of categories between $\CALGSM_\kk$ and $\LIE_{\kk}^{gd}$.
\item[--] Using this, if we have a cartesian diagram
\[
\xymatrix{
N \ar[r] \ar[d] & \kk \ar[d] \\
M \ar[r] & \kk \oplus \kk[n]
}
\]
where $N$ and $M$ are small, then 
\[
\xymatrix{
\mathfrak{D}(N) \ar[r] \ar[d] & \{0\} \ar[d] \\
\mathfrak{D}(M) \ar[r] & \mathfrak{D}(\kk \oplus \kk[n])
}
\]
is cartesian in $\LIE_{\kk}^{op}$, and therefore
\[
\xymatrix{
\Delta(\mathfrak g)(N) \ar[r] \ar[d] & {*} \ar[d] \\
\Delta(\mathfrak g)(M) \ar[r] & \Delta(\mathfrak g)(\kk \oplus \kk[n])
}
\]
is also cartesian in $\SSET$. This implies that $\Delta$ is an object of $\bold{FMP}_{\kk}$. Hence $\Delta$ factors through the category $\bold{FMP}_{\kk}$.
\end{itemize}
\end{proof}

\section{General formal moduli problems and Koszul duality}

\subsection{Deformation contexts and small objects} \label{oups}
In this section, we will explain how the notions of small and elementary morphism make sense in a broader categorical setting. We start by some general facts on $\infty$-categories.
\begin{itemize}
\item[--] If $\bold C$ is an $\infty$-category with finite limits\footnote{This hypothesis will be always implicit in the sequel. It implies among other things that $\bold C$ has a terminal object $*$.}, its \textit{stabilization} $\mathrm{Stab}(\bold C)$ can be described as the $\infty$-category of spectrum objects (also called infinite loop objects) in $\bold C$. An object of $\mathrm{Stab}(\bold C)$ is a sequence $E=(E_n)_{n \in \mathbb{Z}}$ of pointed objects\footnote{The $\infty$-category of pointed objects is the coslice $\infty$-category $ ^{*/}\!\bold C$ under the terminal object.} together with weak equivalences $E_n \to \Omega E_{n+1}$, where $\Omega:=\Omega_*$ denotes the based loop functor: $\Omega(c)=*\underset{c}{\times}*$. We often write $E_n=\Omega^{\infty-n}E$.
\item[--] If $\bold C$ is stable, then $\mathrm{Stab}(\bold C)$ is naturally equivalent to $C$ via the map sending $(E_n)_n$ to $E_0$.
\item[--] If $\bold C$ is an $\infty$-category with finite limits and $c$ is an object of $\bold C$, then the stabilization $\mathrm{Stab}(^{c/}\!\bold C)$ of its coslice $\infty$-category $ ^{c/}\!\bold C$ is equivalent to $\mathrm{Stab}(\bold C)$. 
Indeed, the $\infty$-categories of pointed objects in $\bold C$ and $ ^{c/}\!\bold C$ are themselves equivalent. 
\item[--] If $\bold C$ is an $\infty$-category with finite limits and $c$ is an object of $\bold C$, then the stabilization $\mathrm{Stab}(\bold C_{/c})$ of its slice $\infty$-category $\bold C_{/c}$ is the category of spectrum objects in the $\infty$-category $ ^{id_c/}\!(\bold C_{/c})$ of sequences $c\to d\to c$ such that the composition is the identity. 
\item[--] If $\mathbf{C}=\SSET$, then $\mathrm{Stab}(\mathbf{C})$ is the $\infty$-category $\SPA$ of spectra (that is spectrum objects in spaces).
\end{itemize}
\begin{Def}
A pair $(\bold C,E)$, where $\bold C$ is a presentable $\infty$-category with finite limits and $E$ is an object of $\mathrm{Stab}(\bold C)$, is called a \textit{deformation context}. 
Given a deformation context $(\bold C,E)$: 
\begin{itemize}
\item[--] A morphism in $\bold C$ is \textit{elementary} if it is a pull-back of $* \to \Omega^{\infty-n}E$ for $n\geq 1$ (where $*$ is a terminal object in the category $C$). 
\item[--] A morphism in $\bold C$ is \textit{small} if it can be written as a finite sequence of elementary morphisms. 
\item[--] An object $c$ is \textit{small} if the morphism $c \to *$ is small. 
\end{itemize}
We let $(\bold C,E)^{sm}$ be the full subcategory of $\bold C$ spanned by the small objects. When it is clear from the context, we may abuse notation and write $\bold C^{sm}:=(\bold C,E)^{sm}$. 
\end{Def}
Let us give two examples of deformation contexts:
\begin{Exa}\label{examod}
If $\bold C=\MOD_\kk$, which is already stable, then we have an equivalence 
\begin{eqnarray*}
\MOD_{\kk} & \tilde\longrightarrow & \mathrm{Stab}(\bold C) \\
M & \longmapsto & (M[n])_n .
\end{eqnarray*}
In this context we will mainly consider the spectrum object $E=(\kk[n+1])_{n\in\mathbb{Z}}$.
\end{Exa}
\begin{Rem}\label{remmod}
Instead of working over the ground field $\kk$, we can work over an arbitrary cdga $A$. 
Then we can take $\bold C=\MOD_A$ and $E=(A[n+1])_{n\in\mathbb{Z}}$ in $\mathrm{Stab}(\bold C)$.
\end{Rem}
\begin{Exa} \label{gioberney}
The category $\mathrm{Stab}(\CALG_\kk)$ is equivalent to $\MOD_\kk$. 
Indeed, we have an equivalence $\CALG_\kk\simeq \CALGNU_\kk$, and the based loop functor $\Omega_0$ in $\CALGNU_\kk$ ($0$ is the terminal nonunital cdga) sends a non-unital algebra $R$ to $R[-1]$ equipped with the trivial product; hence the equivalence 
\begin{eqnarray*}
\MOD_\kk & \tilde\longrightarrow & \mathrm{Stab}(\CALG_\kk) \\
M & \longmapsto & (\kk \oplus M[n])_{n\in\mathbb{Z}} .
\end{eqnarray*}
In this context we will mainly consider the spectrum object $E=(\kk\oplus\kk[n])_{n\in\mathbb{Z}}$. 
\end{Exa}
\begin{Rem}\label{remalgA}
One can prove in a similar way that the stabilization $\mathrm{Stab}(\CDGA_{A/A})$ of the $\infty$-category $\CDGA_{A/A}$ of $A$-augmented $A$-algebras, where $A$ is in $\CDGA_\kk$, is equivalent to $\MOD_A$. 
Moreover, if $\CDGA_{\kk/A}$ is the $\infty$-category of $A$-augmented $\kk$-algebras, then 
\[
\mathrm{Stab}\,(\CDGA_{\kk/A}) \simeq \mathrm{Stab}\,(\CDGA_{A/A})\simeq \MOD_A.
\]
In this case a natural spectrum object to consider is $(A\oplus A[n])_{n\in\mathbb{Z}}$. 
\end{Rem}

We have the following obvious, though very useful, lemma, which allows to transfer deformation contexts along adjunctions: 
\begin{Lem}
Suppose $(\bold C, E)$ is a deformation context. If we are given an adjunction $T' \colon \bold C' \adjointHA \bold C \colon T$,, then $T$ preserves small limits, so that $(\bold C', T(E))$ is a deformation context whenever $\bold C'$ is presentable. 
Besides, in this case $T$ induces a functor from $\bold C^{sm}$ to $(\bold C')^{sm}$. $\Box$
\end{Lem}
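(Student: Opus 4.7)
The lemma is a routine consequence of the fact that right adjoints preserve limits, so my plan is mostly to unpack what this means at each level (objects, stabilizations, elementary morphisms, small objects).

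First I would recall that by the paper's convention, in $T'\colon\bold C'\adjointHA\bold C\colon T$ the right-hand functor $T$ is the right adjoint. As such, $T$ preserves all (small) limits: in particular it preserves terminal objects, pullbacks, and the based loop functor $\Omega=\Omega_*$. This already gives the first assertion and, since $T\ast\simeq\ast$, also the fact that $T$ lifts to pointed objects. Because it commutes with $\Omega$ up to canonical equivalence, $T$ extends termwise to a functor of stabilizations
\[
\mathrm{Stab}(T)\colon \mathrm{Stab}(\bold C)\longrightarrow\mathrm{Stab}(\bold C'),\qquad (E_n)_{n\in\mathbb{Z}}\longmapsto (T(E_n))_{n\in\mathbb{Z}},
\]
the structure maps $T(E_n)\to T(\Omega E_{n+1})\simeq\Omega T(E_{n+1})$ being induced by $T$ applied to the structure maps of $E$. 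In particular $T(E):=\mathrm{Stab}(T)(E)$ is a well-defined spectrum object of $\bold C'$, and since $\bold C'$ is presentable by hypothesis, $(\bold C',T(E))$ satisfies the definition of a deformation context. Note that with this convention we have $\Omega^{\infty-n}T(E)\simeq T(\Omega^{\infty-n}E)$ for every $n$.

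For the second claim, let $f\colon a\to b$ be an elementary morphism of $\bold C$, given as a pullback square
\[
\xymatrix{
a \ar[r]\ar[d]_-{f} & \ast \ar[d] \\
b \ar[r] & \Omega^{\infty-n}E
}
\]
for some $n\geq1$. Applying $T$ and using that $T$ preserves pullbacks, terminal objects, and commutes with the loop tower of $E$, one obtains the pullback square
\[
\xymatrix{
T(a) \ar[r]\ar[d]_-{T(f)} & \ast \ar[d] \\
T(b) \ar[r] & \Omega^{\infty-n}T(E)
}
\]
in $\bold C'$. Hence $T(f)$ is elementary for the deformation context $(\bold C',T(E))$. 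Since small morphisms are by definition finite composites of elementary morphisms and $T$ preserves composition, $T$ sends small morphisms to small morphisms; applying this to $c\to\ast$ (and using $T(\ast)\simeq\ast$) shows that $T$ sends small objects to small objects, giving the induced functor $\bold C^{sm}\to(\bold C')^{sm}$.

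No step is really hard here: the only thing one has to be a little careful about is the compatibility $\Omega^{\infty-n}T(E)\simeq T(\Omega^{\infty-n}E)$, which is exactly what encodes that $T$ commutes with $\Omega$ on each component of the spectrum object, and this is forced by $T$ being a right adjoint. No further hypothesis on $T$ (such as colimit preservation or accessibility) is needed for this statement.
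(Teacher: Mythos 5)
Your argument is correct and is exactly the routine unpacking the paper has in mind: the lemma is stated with its proof omitted as obvious, and the intended reasoning is precisely that a right adjoint preserves limits, hence terminal objects, pullbacks and loops, so $T$ acts termwise on spectrum objects and carries elementary (hence small) morphisms to elementary (hence small) ones. No gaps; your remark that only limit-preservation (and presentability of $\bold C'$) is needed is also accurate.
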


Let us give four examples of this transfer principle\footnote{In all these examples, though, the use of transfer is not strictly necessary. Indeed, all the categories involved have a stabilization that is equivalent to $\MOD_A$.}
\begin{Exa}\label{exacoslice}
Given an $A$-module $L$, the push-out functor $-\coprod_L0:~^{L/}\!\MOD_A\to\MOD_A$ along the zero map $L\to 0$ admits a right adjoint, being the functor sending an $A$-module $M$ to the zero map $L\overset{0}{\to}M$. 
Hence the deformation context from Example \ref{examod} and Remark \ref{remmod} can be transferred to $ ^{L/}\!\MOD_A$. 
\end{Exa}

\begin{Exa}\label{exa-AA}
The relative cotangent complex functor $\mathbb{L}_{A/-}:\CALG_A\to\MOD_A$ admits a right adjoint, being the split square zero extension functor $M\mapsto A\oplus M$. 
Hence the deformation context from Example \ref{gioberney} and Remark \ref{remalgA} can be obtained by transfer from the one given in Example \ref{examod} and Remark \ref{remmod}. 
\end{Exa}

\begin{Exa}\label{exa-kA}
The relative cotangent complex functor $\mathbb{L}_{A/-}:\CDGA_{\kk/A}\to ~^{{\mathbb{L}_{A/\kk}}_/}\!\MOD_A$ admits a right adjoint, being the functor sending a morphism $\mathbb{L}_{A/\kk}\overset{d}{\to}M$ in $\MOD_A$ to the non-necessarily split square zero extension $A\underset{d}{\oplus}M[-1]$ that it classifies. 
Hence the deformation context from Example \ref{exacoslice} (with $L=\mathbb{L}_{A/\kk}$) can be transferred to $\CDGA_{\kk/A}$. 
\end{Exa}

\begin{Exa}
The forgetful functor $\CALG_A\to \CDGA_{\kk/A}$ is a right adjoint: its left adjoint is $A\otimes-$. 
Hence the deformation context from Example \ref{exa-kA} can also be obtained by transfer from the one given in Example \ref{exa-AA}. 
\end{Exa}

The main observation is that formal moduli problems make sense with $\CALG_{\kk}$ being replaced by any deformation context $(\bold C,E)$. We write $\bold{FMP}(\bold C,E)$ for the $\infty$-category of formal moduli problems associated with it. Then $\bold{FMP}\big(\CALG_{\kk}, (\kk \oplus \kk[n])_n\big)=\bold{FMP}_{\kk}$. 

\subsection{Dual deformation contexts and good objects}
We introduce the dual concept of a deformation context:
\begin{Def}
A pair $(\bold D,F)$, where $\bold D$ is a presentable $\infty$-category and $F$ is an object of $\mathrm{Stab}(\bold D^{op})$, is called a \textit{dual deformation context}. 
\end{Def}
\raisebox{0.07cm}{\danger} If $(\bold D,F)$ is a dual deformation context, $(\bold D^{op},F)$ is not in general a deformation context because $\bold D^{op}$ is almost never presentable.
\begin{Exa}
The first easy example is for $D=\MOD_A$. Nevertheless, this example is of high interest (\textit{see} Lemma \ref{bitch} below). Since $\MOD_A$ is stable, so is $\MOD_A^{op}$, and the looping and delooping functors are swapped when passing to the opposite category. Hence spectrum objects in $\MOD_A^{op}$ are of the form $(M[-n])_n$ for some object $M$. In particular, $\big(\MOD_A, (A[-n-1])_n\big)$ is a dual deformation context. 
\end{Exa}
Just like deformation contexts, dual deformation contexts can be transported using adjunctions: assume to be given a dual deformation context $(\bold D, F)$ as well as an adjunction 
$T \colon \bold D\,\begin{matrix}{{\longrightarrow}} \\[-0.3cm] {\longleftarrow}\end{matrix}\, \bold D' \colon \iota$.
Then it is easy to see that $(\bold D', T(F))$ is a dual deformation context on $\bold D'$ whenever $\bold D'$ is presentable, since $T$ preserves colimits (hence limits in the opposite category). 
Using this, one can build a lot of dual deformation contexts starting from $\MOD_A$. 
\begin{Exa}
For instance, the adjunction 
\[
\textit{free} \colon \MOD_A \adjointHA \LIE_A \colon \textit{forget}
\]
yields the dual deformation context $\big(\LIE_A, \textit{free}(A[-n-1])_n\big)$. 
\end{Exa}
\begin{Exa}\label{example-dualdefo-over}
Let $T$ be an $A$-module. Then the pull-back functor $-\times_T0:\MOD_{A/T}\to\MOD_A$ along the zero morphism $0\to T$ admits a left adjoint: it is the functor sending an $A$-module $S$ to the zero morphism $S\overset{0}{\to} T$. 
This yields the dual deformation context $\big(\MOD_{A/T},(A[-n-1]\overset{0}{\to}T)_n\big)$. 
\end{Exa}

\begin{Def}
Given a dual deformation context $(\bold D, F)$, an object (resp.~ morphism) of $\bold D$ is \textit{good} if it is small when considered as an object (resp.~morphism) of $\bold D^{op}$. More explicitely, if $\varnothing$ denotes the initial element of $\bold D$, an object $b$ of $\bold D$ is good if there is a finite sequence of morphisms 
\[
\varnothing=b_m \xrightarrow{f_m}\cdots \xrightarrow{f_2} b_1 \xrightarrow{f_1} b_0\cong b
\]
that are pushouts along 
$F_n\to \varnothing$ in $\bold D$: i.e.~each $f_i$ fits into a pushout square 
\[
\xymatrix{
F_n \ar[d]\ar[r]  & \ar[d]^{f_i} b_i  \\
\varnothing \ar[r] & b_{i-1} 
}
\]
in $\bold D$ for some $n\geq1$. 
We let $(\bold D,F)^{gd}$ be the full subcategory of $\bold D$ spanned by the good objects, 
which we might simply denote $\bold D^{gd}$ if there is no ambiguity. 
\end{Def}
Let us give a somehow nontrivial example. Let $A$ be a $\kk$-algebra\footnote{$A$ is concentrated in degree $0$. }, and consider the dual deformation context $\big(\MOD_{A}, (A[-n-1])_n\big)$.
\begin{Lem} \label{bitch}
An element of $\MOD_{A}$ is good for the dual deformation context $\big(\MOD_{A}, (A[-n-1])_n\big)$ if and only if it is perfect and cohomologically concentrated in positive degrees.
\end{Lem}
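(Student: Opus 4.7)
I will prove the two implications separately, with the substance lying in the reverse direction via a cellular decomposition argument. In both directions, it is crucial that $\MOD_A$ is stable, so pushouts along $F_n \to \varnothing$ coincide with cofibers of maps $A[-n-1]\to b_i$, and the resulting cofiber sequences $A[-n-1]\to b_i\to b_{i-1}$ produce long exact cohomology sequences.

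For the forward direction, I would induct on the length $m$ of a cellular presentation $\varnothing \simeq 0 = b_m \to b_{m-1} \to \cdots \to b_0 \simeq b$. Each step fits into a cofiber sequence $A[-n_i-1] \to b_i \to b_{i-1}$ with $n_i \geq 1$. Perfectness of $A$, together with the fact that perfect objects are closed under shifts and cofibers, inductively gives perfectness of $b_{i-1}$. Since $A[-n_i-1]$ is cohomologically concentrated in degree $n_i+1\geq 2$ and (by induction) $\mathrm{H}^{\leq 0}(b_i) = 0$, the associated long exact sequence immediately forces $\mathrm{H}^{\leq 0}(b_{i-1}) = 0$. The base case $b_m = 0$ is trivial.

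For the reverse direction, let $M$ be perfect with cohomology concentrated in positive degrees. I would choose a quasi-isomorphic strict model $P^1 \xrightarrow{d^1} P^2 \to \cdots \to P^n$, where each $P^k \simeq A^{r_k}$ is finitely generated free, placed in cohomological degrees $1$ through $n$. Consider the brutal truncation $F^{\geq k} M$ consisting of the subcomplex in degrees $\geq k$. The short exact sequence of complexes $0 \to F^{\geq k+1} M \to F^{\geq k} M \to P^k[-k] \to 0$ gives a cofiber sequence in $\MOD_A$, which rotates to $P^k[-k-1] \to F^{\geq k+1} M \to F^{\geq k} M$; equivalently, $F^{\geq k} M$ is the pushout of $F^{\geq k+1} M \leftarrow P^k[-k-1] \to 0$. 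Since $P^k \simeq A^{r_k}$, this single pushout decomposes into $r_k$ iterated pushouts along $A[-k-1] \to 0$, and the constraint $k \geq 1$ ensures $k+1 \geq 2$ as required by the definition of goodness. Iterating from $F^{\geq n+1} M = 0$ down to $F^{\geq 1} M = M$ produces the desired finite cellular presentation of $M$.

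\textbf{Main obstacle.} The delicate step on which the argument rests is the existence of a strict model of $M$ as a bounded complex of finitely generated \emph{free} modules concentrated in positive degrees. This is routine when $A$ is local or a principal ideal domain, since finitely generated projectives are then free. Over more general $A$ one either broadens the notion of ``good'' to allow retracts -- which reconciles it with the standard notion of a compact object of $\MOD_A$ -- or one must check that perfect positively-concentrated complexes admit strict models of the required form; either way, the rest of the argument is cosmetic.
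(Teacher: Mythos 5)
Your forward direction is correct and is essentially the paper's (the paper just observes that the class of perfect, positively concentrated complexes is stable under pushouts along $A[-n-1]\to 0$, which is your long-exact-sequence argument). The genuine gap is in the reverse direction, exactly at the point you flag, and neither of your two escape routes closes it. A perfect complex concentrated in positive degrees need not admit a strict model by finitely generated \emph{free} modules in positive degrees: for any such model the Euler class $\sum_i(-1)^i[F^i]\in K_0(A)$ would lie in $\mathbb{Z}\cdot[A]$, so taking $M=P[-1]$ with $P$ a rank-one projective of nontrivial class (e.g.\ over the coordinate ring of an affine curve with nontrivial Picard group, which is a perfectly good $\kk$-algebra) gives a perfect complex in degree $1$ with no free model. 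So the second fix is unavailable for general $A$. The first fix, enlarging ``good'' to allow retracts, proves a different statement: the lemma is about the retract-free notion of goodness given by the definition (a finite sequence of pushouts along $A[-n-1]\to\varnothing$), and you cannot change the definition to make the proof go through; closure of the good objects under retracts is in effect what is being proved.

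The missing idea, which is the heart of the paper's argument, is how to attach a \emph{projective} cell using only free cells and no retracts. Choose $A^r\cong P\oplus A^q$. First, $A^q[-n-1]$ is good, being the cofiber of $A^q[-n-2]\to 0$, i.e.\ $q$ pushouts along $A[-n-2]\to 0$. Then perform the $r$ pushouts along $A[-n-1]\to 0$ starting from the projection $A^r[-n-1]\to A^q[-n-1]$: its cofiber is exactly $P[-n]$, so $P[-n]$ is good with no retract taken. The paper runs your truncation induction with this trick built in: if $K$ is a positively graded complex of finitely generated projectives starting in degree $n$ with $P=K_n$, the stupid truncation gives a pushout square exhibiting $K$ as the cofiber of a map $A^r[-n-1]\to A^q[-n-1]\oplus\tau^{>n}K$ (projection plus attaching map), and one concludes since $\tau^{>n}K$ is good by induction on the amplitude and $A^q[-n-1]$ is good. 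With this modification your brutal-truncation argument goes through; without it, your proof only covers rings over which the required free models exist (local rings, PIDs, more generally when the relevant $K_0$ obstruction vanishes), not the general $\kk$-algebra $A$ of the statement.
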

\begin{proof}
Recall that a perfect complex is the same as a dualizable object, which means a complex quasi-isomorphic to a finite complex consisting of projective $A$-modules of finite type. 
Let $K$ be a perfect complex concentrated in positive degree and prove, by induction on the amplitude, that $K$ is good. 
If the amplitude is $0$ then $K$ is quasi-isomorphic to $P[-n]$, for $n \geq 1$, with $A^r=P\oplus A^q$ . We have the following push-out square in $\MOD_A$:
\[
\xymatrix{
A^q[-n-2] \ar[d] \ar[r] & 0 \ar[d] \\
0  \ar[r] & A^q[-n-1]
}
\]
Hence $A^q[-n-1]$ is good. We then also have another push-out square: 
\[
\xymatrix{
A^r[-n-1] \ar[d] \ar[r] & A^q[-n-1] \ar[d] \\
0  \ar[r] & P[-n]
}
\]
Hence the morphism $A^q[-n-1]\to P[-n]$ is good, and since $A^q[-n-1]$ is good, then $P[-n]$ is good as well. 
\par \medskip
Performing the induction step is now easy: let $K$ be a positively graded complex of finitely generated projective $A$-modules of some finite amplitude $d>0$, let $n>0$ be the index where $K$ starts and let $P=K_n$. 
We have a push-out square:
\[
\xymatrix{P[-n-1] \ar[d] \ar[r] & \tau^{>n} K \ar[d] \\
0 \ar[r] & K
}
\] 
where $\tau^{>n}$ is the stupid truncation functor, and using again that $A^r=P\oplus A^q$ we get another push-out square: 
\[
\xymatrix{A^r[-n-1] \ar[d] \ar[r] & A^q[-n-1]\oplus\tau^{>n} K \ar[d] \\
0 \ar[r] & K
}
\]
Hence the morphism $A^q[-n-1]\oplus\tau^{>n} K\to K$ is good. But $\tau^{>n} K$ is good (by induction on the amplitude) and $A^q[-n-1]$ is good as well, thus so is $K$. This finishes the induction step.
\par \medskip
For the converse statement, it suffices to observe that, given a push-out square
\[
\xymatrix{A[-n-1] \ar[r] \ar[d] & K \ar[d] \\
0 \ar[r] & L
}
\]
in $\MOD_A$, where $n\geq1$, and $K$ is perfect and concentrated in positive degrees, then so is $L$. .
\end{proof}
Similarly, one can prove that small objects for the deformation context $\big(\MOD_{A}, (A[n+1])_n\big)$ are perfect complexes of $A$-modules cohomologicaly concentrated in negative degrees. 
We leave it as an exercise to the reader. 
\begin{Rem} \label{cheat}
If we replace the $\kk$-algebra $A$ by a bounded cdga concentrated in non-positive degrees, then one can still prove that good objects are perfect $A$-modules that are cohomologically generated in positive degree. 
In other words, a good object is quasi-isomorphic, as an $A$-module, to an $A$-module $P$ having the following property: as a graded $A$-module, $P$ is a direct summand of $A\otimes V$, where $V$ is a finite dimensional positively graded $\kk$-module. 
\end{Rem}

\subsection{Koszul duality contexts}
We now introduce the main notion that is needed to state Lurie's theorem on formal moduli problems in full generality: 

\begin{Def}
(1) A \textit{weak Koszul duality context} is the data of :
\begin{itemize}
\item[--] a deformation context $(\bold C,E)$, 
\item[--] a dual deformation context $(\bold D,F)$, 
\item[--] an adjoint pair $\mathfrak D:\bold C\,\begin{matrix}\longrightarrow \\[-0.3cm] \longleftarrow\end{matrix}\,\bold D^{op}:\mathfrak D'$, 
\end{itemize}
such that for every $n\geq0$ there is an equivalence $E_n \simeq \mathfrak D' F_n$. \\[0.2cm]
\indent (2) A \textit{Koszul duality context} is a weak Kozsul duality context satisfying the two additional properties: 
\begin{itemize}
\item[--] For every good object $d$ of $\bold D$, the counit morphism $\mathfrak D\mathfrak D' d \rightarrow d$ is an equivalence.
\item[--] The functor
\[
\Theta \colon \mathrm{Hom}_{\mathbf{D}}(F_n,-) \colon \mathbf{D} \rightarrow \mathrm{Stab}(\SSET)=\SPA
\]
is conservative and preserves small sifted colimits.
\end{itemize}
With a slight abuse of notation, we will denote the whole package of a (weak) Koszul duality context by 
\[
\mathfrak D \colon (\bold C, E)\,\begin{matrix}\longrightarrow \\[-0.3cm] \longleftarrow\end{matrix}\,(\bold D^{op}, F) \colon \mathfrak D'
\]
\end{Def}
\begin{Rem}
The name Kosul duality context has been chosen in agreement with the first non-trivial example we will deal with (\textit{see} Proposition ref{opinel}), as it reflects the well-known Koszul duality between commutative and Lie algebras. Similarly, the characterization of associative formal moduli problems \cite[\S3]{DAG-X}, resp.~$\mathbb{E}_n$ formal moduli problems \cite[\S4]{DAG-X}, can be proven with the help of a Koszul duality context that reflects the Koszul duality for associative algebras, resp.~$\mathbb{E}_n$-algebras. We actually expect that a pair $(\mathcal O,\mathcal P)$ of Koszul dual operads always lead to a Koszul duality context between $\mathcal O$-algebras and $\mathcal P$-algebras, allowing to show that formal moduli problems for $\mathcal O$-algebras are $\mathcal P$-algebras.  
\end{Rem}
Observe that there may be more objects of $\bold D$ for which the counit morphism  is an equivalence, than just good objects. We call them \textit{reflexive objects}. 
\begin{Exa}\label{Koszul-modules}
The only elementary Koszul duality context we can give at this stage is the following: if $A$ is a bounded cdga concentrated in nonpositive degrees, 
\[
(-)^\vee=\mathrm{Hom}_A(-, A) \colon \big(\MOD_{A}, (A[n+1])_n\big) \,\begin{matrix}\longrightarrow \\[-0.3cm] \longleftarrow\end{matrix}\,\big(\MOD_{A}^{op}, (A[-n-1])_n\big) \colon\mathrm{Hom}_A(-, A)=(-)^\vee.
\]
To prove that it is indeed a Koszul duality context, we use Lemma \ref{bitch} and Remark \ref{cheat}: a good object in $\MOD_{A}^{op}$ is a perfect $A$-module (generated in non-negative degrees), so it is isomorphic to its bidual. Lastly, the functor $\Theta$ is simply the forgetful functor
\[
\MOD_A \rightarrow \MOD_{\kk} \simeq \mathrm{Stab}(\MOD_{\kk}) \rightarrow \mathrm{Stab}(\SSET) \simeq \SPA
\]
which is conservative. 
Note that there are strictly more reflexive objects than good ones: for instance, $\oplus_{n\geq0}A[-n]$ is reflexive, but not good (it is an example of an almost finite cellular object in the terminology of \cite{He}). 
\end{Exa}
There is a peculiar refinement of the above example, that will be useful for later purposes. 
\begin{Exa}\label{Koszul-mod-over-under}
Let $A$ be a bounded cdga concentrated in nonpositive degrees, and let $L$ be an $A$-module. 
We have a weak Koszul duality context\footnote{Observe that, contrary to what one could think, it is not required that $L$ is perfect, or even just reflexive. 
Indeed, for $L\overset{\ell}{\to} M$ and $K\overset{k}{\to} L^\vee$, the following commuting diagrams completely determine each other: 
\[
\xymatrix{
K \ar[r]^-{f^\vee}\ar[d]^-{k} & M^\vee  \ar[dl]^-{\ell^\vee} \\
L^\vee & 
}
\qquad\mathrm{and}\qquad
\xymatrix{
L \ar[r]\ar[d]^-{\ell} & (L^\vee)^\vee\ar[d]^-{k^\vee} \\
M \ar[r]^-f & K^\vee 
}
\]
} 
\[
(-)^\vee \colon \big(^{L/}\!\MOD_{A}, (L\overset{0}{\to}A[n+1])_n\big) \,\begin{matrix}\longrightarrow \\[-0.3cm] \longleftarrow\end{matrix}\,\big((\MOD_{A/L^\vee})^{op}, (A[-n-1]\overset{0}{\to}L^\vee)_n\big) \colon(-)^\vee\,,
\]
where the right adjoint sends $K\overset{k}{\longrightarrow} L^\vee$ to the composed morphism $L\to (L^\vee)^\vee\overset{k^\vee}{\longrightarrow} K^\vee$. 
We claim that this is actually a Koszul duality context. 
Indeed, one first observes that good objects are given by morphisms $K\to L^\vee$ where $K$ is a good object in $\MOD_A$. Hence they are again isomorphic to their biduals. 
Lastly, the functor $\Theta$ is the composition of the pull-back functor $-\underset{L^\vee}{\times}0:(\MOD_{A/L^\vee})^{op}\to\MOD_A$ along the zero morphism $0\to L^\vee$, with the forgetful functor $\MOD_A \to \SPA$ from the previous example. They are both conservative, hence $\Theta$ is.  
\end{Exa}

There are a few properties that can be deduced from the definition, which are absolutely crucial.
\begin{Prop}[{\cite[Proposition 1.3.5]{DAG-X}}] \label{yolo}
Assume that we are given a Koszul duality context
\[
\mathfrak D \colon (\bold C, E) \adjointHA (\bold D^{op}, F) \colon \mathfrak D'
\]
\begin{itemize}
\item[\textbf{\emph{(A)}}] For every $n\geq0$, $\mathfrak{D} E_n \simeq F_n$.
\item[\textbf{\emph{(B)}}] For every small object $\mathcal M$ in $\bold C$, the unit map $\mathcal{M} \rightarrow \mathfrak{D}' \mathfrak{D}(\mathcal{M})$ is an equivalence.
\item[\textbf{\emph{(C)}}] There is an equivalence of categories
$
\xymatrix{
(\bold C , E)^{sm} \ar@<2pt>[r]^{\mathfrak{D}} & \ar@<2pt>[l]^{\mathfrak{D}'} (\bold D^{op} , F)^{gd}.
}
$
\item[\textbf{\emph{(D)}}] Consider a pullback diagram
\[
\xymatrix{
N \ar[r] \ar[d] & A \ar[d]^-{f} \\
M \ar[r] & B
}
\]
where $f$ is small and $M$ is small. Then the image of this diagram by $\mathfrak{D}$ is still a pullback diagram.
\end{itemize}
\end{Prop}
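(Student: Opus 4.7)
The plan is to establish the four items in the order \textbf{(A)}, \textbf{(B)} and \textbf{(C)} jointly, then \textbf{(D)}, pivoting respectively on the weak Koszul identity $\mathfrak D' F_n\simeq E_n$, the counit equivalence on good objects, and the conservativity of $\Theta$.

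For \textbf{(A)}, the first move is to show that each $F_n$ is itself a good object of $\bold D$: the spectrum relation $F_{n+1}\simeq\Omega F_n$ in $\bold D^{op}$ translates, when read in $\bold D$, into a presentation of $F_n$ as the pushout $\varnothing\sqcup_{F_{n+1}}\varnothing$, which is precisely one pushout step along a map $F_{n+1}\to\varnothing$ and so fits the definition of a good sequence of length one. Once $F_n$ is known to be good, the counit equivalence $\mathfrak D\mathfrak D' F_n\simeq F_n$ combined with $\mathfrak D' F_n\simeq E_n$ yields $\mathfrak D E_n\simeq F_n$.

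For \textbf{(B)} and \textbf{(C)}, I would argue jointly by induction on the length of an elementary decomposition of a small object $\mathcal M$. Given an elementary step $\mathcal N=\mathcal M\times_{E_n}*$ with the inductive hypotheses that $\mathfrak D\mathcal M$ is good and the unit on $\mathcal M$ is an equivalence, I would form $X:=\mathfrak D\mathcal M\sqcup_{F_n}\varnothing$ in $\bold D$, attached via the map $F_n\to\mathfrak D\mathcal M$ obtained by applying $\mathfrak D$ to $\mathcal M\to E_n$ and invoking (A). By construction $X$ extends the good sequence of $\mathfrak D\mathcal M$ by one step and is therefore good. Regarded as a pullback in $\bold D^{op}$, $X$ is sent by the right adjoint $\mathfrak D'$ to the corresponding pullback in $\bold C$, which collapses via the inductive unit equivalence to $\mathfrak D' X\simeq\mathcal M\times_{E_n}*=\mathcal N$. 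Applying $\mathfrak D$ and the counit equivalence for the good object $X$ then gives $\mathfrak D\mathcal N\simeq\mathfrak D\mathfrak D' X\simeq X$, so $\mathfrak D\mathcal N$ is good and in particular has counit equivalence; by the standard fact that unit-equivalence and counit-equivalence full subcategories correspond under an adjunction, the unit on $\mathcal N$ is an equivalence as well. This completes the inductive step of (B), and (C) follows by combining unit and counit equivalences with the dual induction showing that $\mathfrak D'$ sends good to small (using that $\mathfrak D'$, as a right adjoint, preserves the pullbacks in $\bold D^{op}$ corresponding to good pushout steps in $\bold D$, and that these map to elementary pullbacks in $\bold C$ along $*\to E_n$).

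For \textbf{(D)}, I would decompose the small morphism $f$ into an elementary tower $A=X_0\to X_1\to\cdots\to X_k=B$ and pull back $M\to B$ along it to obtain a chain $N=N_0\to N_1\to\cdots\to N_k=M$ of elementary morphisms between small objects (smallness propagates backwards from $M$), then attempt to reduce to the elementary case treated in (B)/(C). The main obstacle, which I expect to be the most delicate part of the whole proposition, is that the $X_i$ are not assumed small, so the elementary strips $N_i\to X_i,\,N_{i+1}\to X_{i+1}$ are not literally of the ``small-to-$E_n$'' form handled in the inductive step; each strip pastes with the defining elementary square of $X_i\to X_{i+1}$ into an absolute pullback $N_i\to*,\,N_{i+1}\to E_{n_i}$ to which the elementary case applies, but translating preservation of the pasted outer square back to preservation of the strip itself requires more than pure pasting. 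At this point I would invoke the conservativity and sifted colimit compatibility of $\Theta$ to reduce the equivalence of the canonical comparison map $\mathfrak D N\to\mathfrak D M\sqcup^{\bold D^{op}}_{\mathfrak D B}\mathfrak D A$ to a spectrum-level statement, where the structural identifications built in (A), (B), (C) force the desired conclusion.
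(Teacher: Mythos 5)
Your items \textbf{(A)}, \textbf{(B)} and \textbf{(C)} are correct and essentially reproduce the paper's own argument: the goodness of each $F_n$ coming from the spectrum structure ($F_n\simeq\varnothing\sqcup_{F_{n+1}}\varnothing$ in $\mathbf{D}$), the induction writing every small object as $\mathfrak{D}'$ of a good object built one pushout along $F_n\to\varnothing$ at a time, the triangle identity converting counit equivalences on good objects into unit equivalences on their images under $\mathfrak{D}'$, and right-adjointness of $\mathfrak{D}'$ to send goods to smalls; only the packaging (one joint induction instead of two successive steps) differs.

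The gap is in \textbf{(D)}. You correctly isolate the delicate point (the intermediate objects $X_i$, i.e.\ $A$ and $B$, are not assumed small, so pure pasting does not transfer preservation of the outer square $[N,*;M,E_{n_i}]$ to the strip), but your proposed resolution is not an argument: $\Theta$ preserves \emph{sifted} colimits, whereas the comparison map $\mathfrak{D}N\to \mathfrak{D}M\sqcup_{\mathfrak{D}B}\mathfrak{D}A$ you want to test involves a pushout in $\mathbf{D}$, a finite and non-sifted colimit. Conservativity of $\Theta$ therefore buys nothing unless you can compute $\Theta$ of that pushout, and nothing in (A)--(C) supplies such a computation; ``the structural identifications force the desired conclusion'' is precisely the missing proof.

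The way this step is actually closed (and what the paper's terse ``reduce to squares of the form $[N,*;M,E_n]$, then use (C)'' means) is the following: in every situation where (D) is invoked the whole square lies in $\mathbf{C}^{sm}$ (note that $*$ and each $E_n$ are small, and smallness is stable under composition and pullback, so once $B$ is small every corner of every intermediate strip is small). Then, for an elementary $f$ classified by a map $B\to E_n$, both the outer square $[N,*;M,E_n]$ and the defining square $[A,*;B,E_n]$ are literal instances of the basic case established in your (B)-induction, so their $\mathfrak{D}$-images are pullbacks in $\mathbf{D}^{op}$, and the pasting law forces the $\mathfrak{D}$-image of the strip $[N,A;M,B]$ to be a pullback as well; an induction along the elementary decomposition of $f$ finishes the proof. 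If you insist on the statement with $B$ not small, you need a genuinely different argument (your $\Theta$ step does not provide one); note, however, that for the only application, Theorem \ref{bomb}, the basic squares $[N,*;M,E_n]$ already suffice, since a formal moduli problem need only preserve pullbacks along $*\to E_n$.
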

\begin{proof} The proof is clever but completely formal, and doesn't require any extra input. 
\par \medskip
\textbf{(A)}  This is straightforward: $\mathfrak{D} E_n \simeq \mathfrak{D} \mathfrak{D}' F_n \simeq F_n$ since $F_n$ is good. 
\par \medskip
\textbf{(B)}  We proceed in two steps.
First we prove it if $\mathcal{M}$ is for the form $\mathfrak{D}'(\mathcal{N})$ for some good object $\mathcal{N}$. This is easy:
the composition
\[
\mathfrak{D}' \cong \mathrm{id}  \circ \mathfrak{D}'  \Rightarrow  (\mathfrak{D}' \circ  \mathfrak{D}) \circ  \mathfrak{D}' \cong  \mathfrak{D}' \circ (\mathfrak{D} \circ  \mathfrak{D}') \Rightarrow \mathfrak{D}' \circ \mathrm{id} \cong \mathfrak{D}'
\]
is an equivalence (it is homotopic to the identity). This means that the composition
$\mathfrak{D}' \mathfrak{D} (\mathfrak{D}' \mathcal{N}) \rightarrow \mathfrak{D}' \mathcal{N}$ is obtained up to equivalence by applying $\mathfrak{D}'$ to the equivalence $\mathfrak{D} \mathfrak{D}'(\mathcal{N}) \rightarrow \mathcal{N}$, hence it is also an equivalence.
\par \medskip
Now we prove that every small object can be written as $\mathfrak{D}'(X)$ where $X$ is good. This will in particular imply that $\mathfrak{D}$ maps small objects to good objects\footnote{This is nontrivial: $\mathfrak{D}$ preserves \textit{colimits}, and small objects are defined using \textit{limits}.}. Every small
morphism can be written as a composition of a finite number $i$ of elementary morphisms, we will argue by induction on the number $i$. If $i=0$, then since $\mathcal{D}'$ is a right adjoint it preserves limits. Hence $\mathfrak{D}'(\varnothing)={*}$, where $\varnothing$ is the initial element of $\bold{D}$. Since $\varnothing$ is good, we are done. We now deal with the induction step. Let us consider a cartesian diagram
\[
\xymatrix{\mathcal{M}_{i+1} \ar[d] \ar[r] & {*} \ar[d] \\
\mathcal{M}_i \ar[r] & E_n
}
\]
in $\bold C$ such that $\mathcal{M}_i \rightarrow *$ is obtained by composition of at most $i$ elementary morphisms. By induction, $\mathcal{M}_{i+1}=\mathfrak{D}'(\mathcal{N})$ for some good object $\mathcal{N}$. This implies that $\mathfrak{D}(\mathcal{M}_{i})$ is good. Let $X$ be the pullback the object of $\bold{D}$ making the diagram
\[
\xymatrix{X \ar[r] \ar[d] & {*} \ar[d] \\
\mathfrak{D}(\mathcal{M}_{i}) \ar[r] &  \mathfrak{D}(E_n)
}
\]
cartesian in $\bold{D}^{op}$. Since $\mathfrak{D}(\mathcal{M}_{i})$ is good and $\mathfrak{D}(E_n) \simeq F_n$, $X$ is good. Now we apply $\mathfrak{D}'$, which preserves limits. We get a cartesian diagram isomorphic to 
\[
\xymatrix{\mathfrak{D}'(X) \ar[r] \ar[d] & {*} \ar[d] \\
\mathcal{M}_{i} \ar[r] & E_n
}
\]
This proves that $\mathcal{M}_{i+1}$ is isomorphic to $\mathfrak{D}'(X)$.
\par \medskip
\textbf{(C)} This is a direct consequence of \textbf{(B)}.
\par \medskip
\textbf{(D)} We can reduce to diagrams of the form
\[
\xymatrix{
N \ar[r] \ar[d] & {*} \ar[d] \\
M \ar[r] & E_n
}
\]
Then the property follows from \textbf{(C)}.
\end{proof}

\subsection{Morphisms of (weak) Koszul duality contexts}

We will now introduce the notion of morphisms between weak Koszul duality contexts. It will be extremely useful in the sequel.
\begin{Def} \label{giletjaunes}
Let \[
\mathfrak D_1 \colon (\bold C_1, E_1)\,\begin{matrix}\longrightarrow \\[-0.3cm] \longleftarrow\end{matrix}\,(\bold D_1^{op}, F_1) \colon \mathfrak D'_1
\]
and
\[
\mathfrak D_2 \colon (\bold C_2, E_2)\,\begin{matrix}\longrightarrow \\[-0.3cm] \longleftarrow\end{matrix}\,(\bold D^{op}_2, F_2) \colon \mathfrak D'_2
\]
be two weak Koszul duality contexts. 
\par \medskip
\noindent{}A \textit{weak morphism} between these two duality contexts consists of two additional pairs of adjoint functors appearing (vertically) in the diagram
\[
\xymatrix@C=60pt@R=30pt{
\bold C_1 \ar@<-2pt>[d]_-{S} \ar@<2pt>[r]^-{\mathfrak{D}_1} &   \ar@<2pt>[l]^-{\mathfrak{D}'_1} \bold{D}_1^{op} \ar@<-2pt>[d]_-{Y} \\
\bold C_2\ar@<-2pt>[u]_-{T} \ar@<2pt>[r]^-{\mathfrak{D}_2}  &  \bold{D}_2^{op}  \ar@<2pt>[l]^-{\mathfrak{D}'_2} \ar@<-2pt>[u]_-{Z}
}
\]
such that:
\begin{itemize}
\item[\textbf{{(A)}}] The diagram consisting of right adjoints $\xymatrix{ \bold C_1 & \ar[l]_-{\mathfrak{D}'_1} \bold D_1^{op}
\\ \bold C_2 \ar[u]^-{T} & \ar[u]_-{Z} \ar[l]^-{\mathfrak{D}'_2} \bold D_2^{op}
}$ commutes.
\item[\textbf{{(B)}}] We have equivalences $Z(F_{2, n}) \simeq F_{1, n}$.
\end{itemize}
\end{Def}
\begin{Exa}\label{ex-tricky}
Let $L$ be an $A$-module. We then have the following weak morphism from the Koszul duality context of Example \ref{Koszul-mod-over-under} to the one of Example \ref{Koszul-modules}: 
\[
\xymatrix@C=60pt@R=30pt{
^{L/}\!\MOD_A \ar@<-2pt>[d]_-{\textit{cofib}} \ar@<2pt>[r]^-{(-)^\vee} &   \ar@<2pt>[l]^-{(-)^\vee} (\MOD_{A/L^\vee})^{op} \ar@<-2pt>[d]_-{\textit{fib}} \\
\MOD_A\ar@<-2pt>[u]_-{M\mapsto (L\overset{0}{\to}M)} \ar@<2pt>[r]^-{(-)^\vee}  &  \MOD_A^{op}  \ar@<2pt>[l]^-{(-)^\vee} \ar@<-2pt>[u]_-{K\mapsto (K\overset{0}{\to} L^\vee)}
}
\] 
where $\textit{cofib}=0\underset{L}{\coprod}-$ and $\textit{fib}=0\underset{L^\vee}{\times}-$. 
\end{Exa}
Notice that the natural equivalence \raisebox{-1ex}{\begin{tikzpicture}
\draw [>=stealth,->] (0.4,0.4) to (0,0.4) ;
\draw [>=stealth,->] (0.4,0) to (0.4,0.4) ;
\draw [>=stealth,thin,double,<->] (0.5,0.2) to (0.9,0.2);
\draw [>=stealth,->] (1,0) to (1,0.4) ;
\draw [>=stealth,->] (1.4,0) to (1,0) ;
\end{tikzpicture}} from condition \textbf{(A)} above has a \textit{mate} \raisebox{-1ex}{\begin{tikzpicture}
\draw [>=stealth,->] (0.4,0.4) to (0,0.4) ;
\draw [>=stealth,->] (0,0.4) to (0,0) ;
\draw [>=stealth,thin,double,->] (0.5,0.2) to (0.9,0.2) node[above left]{{\tiny $\theta$}};
\draw [>=stealth,->] (1.4,0.4) to (1.4,0) ;
\draw [>=stealth,->] (1.4,0) to (1,0) ;
\end{tikzpicture}} which we define pictorially as the composition 
\[
\begin{tikzpicture}
\draw [>=stealth,->] (0.4,0.4) to (0,0.4) ;
\draw [>=stealth,->] (0,0.4) to (0,0) ;
\draw [>=stealth,thin,double,->] (0.5,0.2) to (0.9,0.2) ;
\draw [>=stealth,->] (1.3,0.4) to (1,0.4) ;
\draw [>=stealth,->] (1,0.4) to (1,0) ;
\draw [>=stealth,->] (1.4,0.4) to (1.4,0) to (1.3,0) to (1.3,0.4) ;
\draw [>=stealth,thin,double,<->] (1.5,0.2) to (1.9,0.2) ;
\draw [>=stealth,->] (2.4,0.4) to (2.4,0) ;
\draw [>=stealth,->] (2.4,0) to (2.1,0) ;
\draw [>=stealth,->] (2.1,0) to (2.1,0.4) to (2,0.4) to (2,0) ;
\draw [>=stealth,thin,double,->] (2.5,0.2) to (2.9,0.2) ;
\draw [>=stealth,->] (3.4,0.4) to (3.4,0) ;
\draw [>=stealth,->] (3.4,0) to (3,0) ;
\end{tikzpicture}
\]
where 
\raisebox{-1ex}{\begin{tikzpicture}
\node (A) at (-0.2,0.2) {$id$};
\draw [>=stealth,thin,double,->] (0.1,0.2) to (0.5,0.2);
\draw [>=stealth,->] (0.7,0.4) to (0.7,0) to (0.6,0) to (0.6,0.4) ;
\end{tikzpicture}} and \raisebox{-1ex}{\begin{tikzpicture}
\node (A) at (0.7,0.2) {$id$};
\draw [>=stealth,thin,double,->] (0.1,0.2) to (0.5,0.2);
\draw [>=stealth,->] (0,0) to (0,0.4) to (-0.1,0.4) to (-0.1,0) ;
\end{tikzpicture}} are used to depict units and counits of vertical adjunctions. In other words, the mate $\theta$ is the composition 
$S\mathfrak{D}_1'\Rightarrow S\mathfrak{D}_1'ZY\cong ST\mathfrak{D}_2'Y\Rightarrow \mathfrak{D}_2'Y$. 
\begin{Rem}
The commutativity of the square of right adjoints implies the commutativity of the square of left adjoints. Hence there is a natural equivalence $\mathfrak{D}_2S\cong Y\mathfrak{D}_1$ that pictorially reads \raisebox{-1ex}{\begin{tikzpicture}
\draw [>=stealth,->] (0,0.4) to (0.4,0.4) ;
\draw [>=stealth,->] (0.4,0.4) to (0.4,0) ;
\draw [>=stealth,thin,double,<->] (0.5,0.2) to (0.9,0.2);
\draw [>=stealth,->] (1,0.4) to (1,0) ;
\draw [>=stealth,->] (1,0) to (1.4,0) ;
\end{tikzpicture}}, and the mate $\theta$ can also be identified with the following composition:
\[
S\mathfrak{D}_1'\Rightarrow \mathfrak{D}_2'\mathfrak{D}_2S\mathfrak{D}_1\cong  \mathfrak{D}_2'Y\mathfrak{D}_1\mathfrak D_1'\Rightarrow \mathfrak{D}_2'Y
\qquad\textrm{or, pictorially}\qquad
\begin{tikzpicture}
\draw [>=stealth,->] (0.4,0.4) to (0,0.4) ;
\draw [>=stealth,->] (0,0.4) to (0,0) ;
\draw [>=stealth,thin,double,->] (0.5,0.2) to (0.9,0.2) ;
\draw [>=stealth,->] (1.4,0.4) to (1,0.4) ;
\draw [>=stealth,->] (1,0.4) to (1,0.1) ;
\draw [>=stealth,->] (1,0.1) to (1.4,0.1) to (1.4,0) to (1,0) ;
\draw [>=stealth,thin,double,<->] (1.5,0.2) to (1.9,0.2) ;
\draw [>=stealth,->] (2.4,0.4) to (2,0.4) to (2,0.3) to (2.4,0.3) ;
\draw [>=stealth,->] (2.4,0.3) to (2.4,0);
\draw [>=stealth,->] (2.4,0) to (2,0) ;
\draw [>=stealth,thin,double,->] (2.5,0.2) to (2.9,0.2) ;
\draw [>=stealth,->] (3.4,0.4) to (3.4,0) ;
\draw [>=stealth,->] (3.4,0) to (3,0) ;
\end{tikzpicture}\,.
\]
\end{Rem}
\begin{Def}
One says that the above commuting square of right ajoints satisfies the \textit{Beck--Chevalley condition locally at $d$}, where $d$ is an object of $\mathbf{D}_1$, 
if the mate $\theta_d:S\mathfrak{D}_1'(d)\to \mathfrak{D}_2'Y(d)$ is an equivalence. 
\end{Def}

\begin{Def}\label{def-Koszulmorphism}
A \textit{morphism} of weak Koszul duality contexts is a weak morphism such that, borrowing the above notation: 
\begin{itemize}
\item[\textbf{{(C)}}] The functor $Y$ is conservative, preserves small sifted limits, and sends good objects to reflexive objects.
\item[\textbf{{(D)}}] The commuting square of right adjoints satisfies the Beck--Chevalley condition locally at good objects. 
\end{itemize}
\end{Def}
 
The main feature of this definition is a result allowing to transfer Koszul duality contexts along morphisms:

\begin{Prop}[Transfer theorem] \label{suppositoire}
Assume to be given a morphism between two weak Koszul duality contexts. If the target deformation context is a Koszul duality context, then the source is also a Koszul duality context.
\end{Prop}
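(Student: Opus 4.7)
The plan is to verify the two defining axioms of a Koszul duality context for the source, namely (i) that the counit $\mathfrak D_1 \mathfrak D'_1(d) \to d$ is an equivalence for every good $d \in \bold D_1$, and (ii) that $\Theta_1 = \mathrm{Hom}_{\bold D_1}(F_{1,n},-)$ is conservative and preserves small sifted colimits. In both cases the strategy is to push the statement across the vertical adjunctions $(S,T)$ and $(Y,Z)$ so that it becomes the corresponding statement for the target context, which is Koszul by hypothesis.

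For the first condition, I would take a good object $d$ of $\bold D_1$ and apply $Y$ to the counit $\varepsilon_d : \mathfrak D_1 \mathfrak D'_1(d) \to d$. Using the equivalence $Y \mathfrak D_1 \cong \mathfrak D_2 S$ obtained as the mate of~\textbf{(A)}, I would rewrite $Y(\varepsilon_d)$ as a composite built from $\mathfrak D_2$ applied to the mate $\theta_d : S \mathfrak D'_1(d) \to \mathfrak D'_2 Y(d)$ and the Context-2 counit $\mathfrak D_2 \mathfrak D'_2 Y(d) \to Y(d)$. The Beck--Chevalley condition~\textbf{(D)} at the good object $d$ says $\theta_d$ is an equivalence, and the good-to-reflexive part of~\textbf{(C)} says the second map is an equivalence. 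Hence $Y(\varepsilon_d)$ is an equivalence, and conservativity of $Y$ (again~\textbf{(C)}) upgrades this to $\varepsilon_d$ itself being an equivalence.

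For the second condition, I would exploit condition~\textbf{(B)}, which gives $F_{1,n}\simeq Z(F_{2,n})$. Since the adjunction $Y \dashv Z$ at the level of $\bold D^{op}$-categories is the adjunction $Z \dashv Y$ at the level of $\bold D$-categories, it supplies a natural equivalence
\[
\mathrm{Hom}_{\bold D_1}\bigl(F_{1,n},X\bigr) \simeq \mathrm{Hom}_{\bold D_2}\bigl(F_{2,n},Y(X)\bigr),
\]
i.e.~$\Theta_1 \simeq \Theta_2 \circ Y$. Conservativity of $\Theta_1$ then follows from conservativity of both $Y$ and $\Theta_2$. For preservation of small sifted colimits, the statement that ``$Y$ preserves small sifted limits'' as a functor $\bold D_1^{op} \to \bold D_2^{op}$ is the same as ``$Y$ preserves small sifted colimits'' as a functor $\bold D_1 \to \bold D_2$; combining this with the analogous property assumed for $\Theta_2$ gives the result.

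I expect the only genuinely delicate point to be the coherence step in the first paragraph, namely the verification that the composite equivalence $\mathfrak D_2(\theta_d)$ followed by the Context-2 counit really does identify with $Y(\varepsilon_d)$. This is where all three of conditions~\textbf{(A)}, \textbf{(C)}, \textbf{(D)} are used simultaneously, and it amounts to a triangle-identity computation read off the pictorial definition of the mate $\theta$. Once this formal identification is in place, the rest of the argument is essentially book-keeping with adjunctions and conservativity.
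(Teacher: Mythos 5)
Your proposal is correct and follows essentially the same route as the paper: the identity $\Theta_1\simeq\Theta_2\circ Y^{op}$ via condition \textbf{(B)} and the $(Y,Z)$-adjunction handles conservativity and sifted colimits, while the counit condition is obtained by applying $Y$ to $\varepsilon_d$, identifying it (through the mate construction and the equivalence $\mathfrak D_2 S\cong Y\mathfrak D_1$) with the composite of $\mathfrak D_2(\theta_d)$ and the Context-2 counit at $Y(d)$, and then invoking \textbf{(D)}, reflexivity of $Y(d)$ from \textbf{(C)}, and conservativity of $Y$. The ``delicate'' coherence step you flag is exactly the commuting diagram the paper checks by unwinding the mate and the triangle identities, so nothing is missing.
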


\begin{proof}
We take the notation of Definition \ref{giletjaunes}. We have two functors $\Theta_1$ and $\Theta_2$, defined on $\bold{D}_1$ and $\bold{D}_2$ respectively and with values in $\SPA$, defined by
\[
\begin{cases}
\Theta_1(d)_n=\mathrm{Hom}_{\bold{D}_1}(F_{1, n}, d) \\
\Theta_2(d)_n=\mathrm{Hom}_{\bold{D}_2}(F_{2, n}, d) \\
\end{cases}
\]

For any object $d$ in $\bold D_1$, we have 
\begin{align*}
\Theta_1(d)_n&=\mathrm{Hom}_{\bold{D}_1}(F_{1, n}, d) \simeq \mathrm{Hom}_{\bold{D}_1^{op}}(d, Z(F_{2, n}))  \\
&\simeq \mathrm{Hom}_{\bold{D}_2^{op}}(Y(d), F_{2, n}) \simeq \mathrm{Hom}_{\bold{D}_2}(F_{2, n}, Y^{op}(d)) \\
&\simeq \Theta_2(Y^{op}(d))_n
\end{align*}
So $\Theta_1=\Theta_2 \circ Y^{op}$, and thus $\Theta_1$ is conservative (because $Y$ and $\Theta_2$ are). 
Moreover, $Y$ preserves small sifted limits, so $Y^{op}$ preserves small sifted colimits, and hence so does $\Theta_1$ (because $\Theta_2$ does too). Lastly, we consider the following diagram in $\bold D_2^{op}$, where $d$ is still an object in $\bold D_1$: 
\[
\xymatrix@C=60pt{
Y \mathfrak{D}_1 \mathfrak{D}'_1 \ar@{<=>}[d]_-{\sim}  \ar@{=>}[r]^-{Y \circ \,\textrm{co-unit} \,} & Y \\
\mathfrak{D}_2 S  \mathfrak{D}'_1 \ar@{=>}[r]_-{\mathfrak{D}_2\circ\theta} & \mathfrak{D}_2 \mathfrak{D}'_2 Y \ar@{=>}[u]_-{\textrm{co-unit}\,\circ Y}.
}
\]
We claim that it commutes. Indeed, writing the mate explicitely gives a diagram
\[
\xymatrix@C=60pt{
Y \mathfrak{D}_1 \mathfrak{D}'_1 \ar@{<=>}[d]_-{\sim}  \ar@{=>}[r]^-{Y \circ \,\textrm{co-unit} \,} & Y \\
\mathfrak{D}_2 S  \mathfrak{D}'_1 \ar@{=>}[d]_-{\mathfrak{D}_2\circ\,\textrm{unit}\,\circ S  \mathfrak{D}'_1} & \mathfrak{D}_2 \mathfrak{D}'_2 Y \ar@{=>}[u]_-{\textrm{co-unit}\,\circ Y} \\
\mathfrak{D}_2\mathfrak{D}_2'\mathfrak{D}_2 S  \mathfrak{D}'_1 \ar@{<=>}[r]^-{\sim} & \mathfrak{D}_2\mathfrak{D}_2'Y\mathfrak{D}_1\mathfrak{D}_1' \ar@{=>}[u]_-{\mathfrak{D}_2\mathfrak{D}_2'Y\circ\,\textrm{co-unit}}
\ar@{=>}[uul]|-{\textrm{co-unit}\,\circ Y\mathfrak{D}_1\mathfrak{D}_1'}
}
\]
where the two triangles commute. 
Given now a good object $d$ in $\bold D_1$, we have that $Y(d)$ is reflexive (after \textbf{(C)} in Definition \ref{def-Koszulmorphism}): thus the co-unit morphism of the adjunction $(\mathfrak{D}_2, \mathfrak{D}'_2)$ is an equivalence on $Y(d)$. Besides, $\theta_d$ is also an equivalence (thanks to \textbf{(D)} in Definition \ref{def-Koszulmorphism}). Therefore $Y\circ \textrm{co-unit}(d)$ is an equivalence and thus, by conservativity of $Y$, the co-unit morphism $\mathfrak{D}_1\mathfrak{D}_1'(d)\to d$ is an equivalence.
\end{proof}
\begin{Exa}\label{ex-tricky-strikes-back}
Going back to Example \ref{ex-tricky}, one sees that the weak morphism is a morphism if and only if $L$ is reflexive (in which case, $L^\vee$ is so as well). Indeed: 
\begin{itemize}
\item[--] The functor $\textit{fib}$ is conservative. 
\item[--] If $K\to L^\vee$ is good then $K$ itself is good, thus perfect, in $\MOD_A$, and thus $\textit{fib}(K\to L^\vee)$ is reflexive as soon as $L^\vee$ is so.  
\item[--] the mate $\theta_{K\to L^\vee}$ is the natural morphism $\textit{cofib}(L\to K^\vee)\to \textit{fib}(K\to L^\vee)^\vee$, which is an equivalence if and only if the unit morphism $L\to (L^\vee)^\vee$ is an equivalence. 
\end{itemize}
Hence the Koszul duality context from Example \ref{Koszul-mod-over-under} is in general not obtained by transfer from the one of \ref{Koszul-modules}, but it is in the case when $L$ is reflexive.
\end{Exa}
\begin{Prop} \label{opinel}
The adjunction
\[
\mathfrak D \colon (\CALG_{\kk}, \kk \oplus \kk[n]) \adjointHA (\LIE_{\kk}^{op}, \textit{free}\, \kk[-n-1]) \colon \mathrm{CE}^{\bullet}
\]
defines a Koszul duality context.
\end{Prop}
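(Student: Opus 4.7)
The plan is to verify the three conditions defining a Koszul duality context: (i) the weak Koszul duality condition $\kk\oplus\kk[n]\simeq\mathrm{CE}^\bullet(\textit{free}\,\kk[-n-1])$; (ii) the counit equivalence $\mathfrak D\,\mathrm{CE}^\bullet(\mathfrak g)\to\mathfrak g$ for every good object $\mathfrak g\in\LIE_\kk$; and (iii) that $\Theta=\mathrm{Map}_{\LIE_\kk}(\textit{free}\,\kk[-n-1],-)$ is conservative and preserves small sifted colimits. Condition (i) is an immediate consequence of the third bullet of Remark~\ref{muraillette} applied to $V=\kk[-n-1]$, since $V^*[-1]\simeq\kk[n]$. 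Condition (iii) reduces via the universal property of free Lie algebras to the identification $\Theta(\mathfrak g)_n\simeq\mathrm{Map}_{\mod_\kk}(\kk[-n-1],\mathfrak g)$; under $\mathrm{Stab}(\MOD_\kk)\simeq\MOD_\kk$ this expresses $\Theta$ as the underlying complex functor $\LIE_\kk\to\MOD_\kk\to\SPA$, which is conservative and commutes with sifted colimits by standard arguments about algebras over an operad.

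The substantive condition is (ii), which I would verify by induction on the number of elementary steps in a presentation of the good dg Lie algebra. The base case $\mathfrak g=0$ is immediate, since $\mathrm{CE}^\bullet(0)=\kk$ is the initial object of $\CALG_\kk$, so $\mathfrak D(\kk)=0$. For the inductive step, assume $\mathcal L_{i+1}$ fits in a pushout
\[
\xymatrix{\textit{free}\,\kk[-n-1] \ar[r]\ar[d] & \mathcal L_i \ar[d] \\ 0 \ar[r] & \mathcal L_{i+1}}
\]
in $\LIE_\kk$. The right adjoint $\mathrm{CE}^\bullet\colon\LIE_\kk^{op}\to\CALG_\kk$ sends this to a pullback in $\CALG_\kk$ whose bottom-right corner is $\kk\oplus\kk[n]$ by (i) and whose upper-right corner is $\kk$. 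If one knows both (a) $\mathfrak D(\kk\oplus\kk[n])\simeq\textit{free}\,\kk[-n-1]$ and (b) $\mathfrak D$ preserves this particular pullback, then together with the induction hypothesis $\mathfrak D\,\mathrm{CE}^\bullet(\mathcal L_i)\simeq\mathcal L_i$ one recovers $\mathcal L_{i+1}$ as the pushout of $0\leftarrow\textit{free}\,\kk[-n-1]\to\mathcal L_i$, closing the induction.

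The main obstacle lies in (a) and (b). For (a), the adjunction $\mathfrak D\dashv\mathrm{CE}^\bullet$ reduces the computation to producing a natural equivalence $\mathrm{Map}_{\CALG_\kk}(\kk\oplus\kk[n],\mathrm{CE}^\bullet(\mathfrak h))\simeq\mathrm{Map}_{\mod_\kk}(\kk[-n-1],\mathfrak h)$; this is a direct manifestation of operadic Koszul duality between the commutative and Lie operads in characteristic zero, and can be checked via the Maurer--Cartan interpretation of maps out of trivial square-zero extensions discussed in Remark~\ref{muraillette}. For (b), the issue is that $\mathfrak D$ is only a left adjoint and therefore does not automatically preserve pullbacks; nevertheless, the pullbacks appearing in the induction are of a very special form along the elementary morphism $\kk\to\kk\oplus\kk[n]$, and one can replace them by an explicit cofibrant presentation (essentially a pushout in disguise, via bar/cobar constructions) which $\mathfrak D$ does preserve. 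This is the technical heart of Lurie's proof~\cite[Proposition 2.2.17]{DAG-X}; once established, the induction closes and condition (ii) is verified, completing the proof that the adjunction is a Koszul duality context.
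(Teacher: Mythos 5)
Your reduction of the definition to (i), (ii), (iii) is faithful to the paper's notion of Koszul duality context, and (i) and (iii) are handled essentially as the paper does (the third bullet of Remark \ref{muraillette} for (i); conservativity plus preservation of sifted colimits of the forgetful functor, i.e.\ \cite[Proposition 2.1.16]{DAG-X}, for (iii)). The gap is in (ii): the whole content of the proposition is concentrated in your claims (a) and (b), and neither of the justifications you give actually establishes them. For (a), note that $\kk\oplus\kk[n]$ is not free as an augmented cdga, so $\mathrm{Map}_{\CALG_\kk}(\kk\oplus\kk[n],\mathrm{CE}^{\bullet}(\mathfrak h))$ has no elementary description; the Maurer--Cartan discussion of Remark \ref{muraillette} goes in the opposite direction (it concerns maps \emph{out of} $\mathrm{CE}^{\bullet}(\mathfrak g)$ \emph{into} an algebra $A$, and even there suffers from completion and cofibrancy caveats), so it cannot be invoked to identify this mapping space with $\mathrm{Map}_{\MOD_\kk}(\kk[-n-1],\mathfrak h)$. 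In fact (a) is exactly the one-cell case of the counit statement you are trying to prove, so it cannot be taken as an input. For (b), the assertion that the left adjoint $\mathfrak D$ preserves the pullback obtained by applying $\mathrm{CE}^{\bullet}$ to the defining pushout is an instance of Proposition \ref{yolo} \textbf{(D)}, which in this formalism is a \emph{consequence} of the Koszul duality axioms, not something available beforehand; and the reference to \cite[Proposition 2.2.17]{DAG-X} is misplaced, since that statement only says that $(\mathrm{CE}^{\bullet})^{op}$ preserves small colimits and is used to produce the adjoint $\mathfrak D$ in the first place. As written, the technical heart of the proof is asserted rather than proved.

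For comparison, the paper never attempts to control $\mathfrak D$ on such pullbacks. It constructs a (weak) morphism of Koszul duality contexts to the linear duality context of Example \ref{Koszul-modules}, with vertical adjunctions $\mathfrak L\dashv(\kk\oplus(-)[-1])$ and $\textit{free}\dashv\textit{forget}$, and invokes the transfer theorem (Proposition \ref{suppositoire}). Conservativity of $\textit{forget}$ then reduces the counit equivalence on good objects to the Beck--Chevalley condition, i.e.\ to showing that $\theta_{\mathfrak g}\colon \mathfrak L\big(\mathrm{CE}^{\bullet}(\mathfrak g)\big)\to\mathfrak g^{*}$ is an equivalence for good $\mathfrak g$: a computation of the cotangent fiber of the Chevalley--Eilenberg algebra. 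That computation is where the genuine work lies: one replaces $\mathfrak g$ by a very good model (Lemma \ref{verygoodlemma}), introduces the uncompleted algebra $\widetilde{\mathrm{CE}}^{\bullet}(\mathfrak g)$, which is cofibrant, applies Lemma \ref{augmentedlemma} identifying $\mathfrak L$ of a cofibrant augmented cdga with $\mathcal J/\mathcal J^{2}[1]$, and concludes by a flatness argument for $\mathfrak L(\iota_{\mathfrak g})$. If you wish to keep your direct induction, you must supply an argument of comparable substance for (a) and (b) (for instance, something playing the role of this cotangent-fiber computation); without it, the induction does not close.
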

\begin{proof}
We consider the diagram (\textit{see} Remark \ref{muraillette}):
\[
\xymatrix@C=80pt@R=40pt{
\CALG_{\kk} &   \ar[l]_-{\mathrm{CE}^{\bullet}} \LIE_{\kk}^{op} \\
\mod_{\kk}\ar[u]^-{V \mapsto \kk \oplus V[-1]} & \mod_{\kk}^{op}  \ar[l]_-{\mathrm{Hom}(-, \kk)} \ar[u]_-{\textit{free}}
}
\]
We can fill it with left adjoints everywhere. This gives the following (nice!) diagram:
\[
\xymatrix@C=100pt@R=50pt{
\CALG_{\kk} \ar@<-2pt>[d]_-{\mathfrak{L}} \ar@<2pt>[r]^-{\mathfrak{D}}&   \ar@<2pt>[l]^-{\mathrm{CE}^{\bullet}} \LIE_{\kk}^{op} \ar@<-2pt>[d]_-{\textit{forget}} \\
\MOD_{\kk}\ar@<-2pt>[u]_-{V \mapsto \kk \oplus V[-1]} \ar@<2pt>[r]^-{\mathrm{Hom}(-, \kk)}  & \MOD_{\kk}^{op}  \ar@<2pt>[l]^-{\mathrm{Hom}(-, \kk)} \ar@<-2pt>[u]_-{\textit{free}}
}
\]
where $\mathfrak{L}(R) =\mathbb{L}_{\kk/R}\simeq \mathbb{L}_{R/\kk}\otimes_R\kk[1]$. 

We claim that these four adjunctions define a morphism of weak Koszul duality contexts. Properties \textbf{(A)} and \textbf{(B)} are true, so that we have a weak morphism. 
We will now prove that it is actually a morphism, so that we get the result, using Proposition \ref{suppositoire} and the fact that the bottom adjunction is a Koszul duality context (Example \ref{Koszul-modules}).  
\par \medskip
The functor $\textit{forget}$ is conservative. Let us now prove that, if $\mathfrak{g}$ is good, then it is equivalent to a \textit{very good} dgla: a good dgla with underlying graded Lie algebra being generated by a finite dimensional graded vector space sitting in positive degrees. 
\begin{Lem}\label{verygoodlemma}
Any good dgla $\mathfrak{g}$ is quasi-isomorphic to a very good one. 
\end{Lem}
\begin{proof}[Proof of the lemma]
We first observe that $0$ is very good. We then proceed by induction: assume that $\mathfrak{g}$ is very good, and consider a dgla $\mathfrak{g}'$ obtained by a pushout 
\[
\xymatrix{\textit{free}\,\, \kk[-n-1]   \ar[r] \ar[d] & \mathfrak{g} \ar[d] \\
0 \ar[r]& \mathfrak{g}'
}
\]
in $\LIE_{\kk}$. The first trick is that $\textit{free}\,\, \kk[-n-1]$ is cofibrant and $\mathfrak{g}$ is fibrant (every object in $\lie_\kk$ is). Hence the morphism $\textit{free}\,\, \kk[-n-1]$ in the $\infty$-category $\LIE_{\kk}$ can be represented by a honest morphism in $\lie_{\kk}$. The next step consists in picking a cofibrant replacement of the left vertical arrow. A cofibrant replacement is given by the morphism
\[
\textit{free}\,\, \kk[-n-1] \rightarrow \textit{free}\,\, \left\{\textrm{cone}\,(\kk \xrightarrow{\mathrm{id}} \kk)[-n-1] \right\}.
\]
Hence our pushout in $\LIE_\kk$ is represented by the following honest non-derived pushout in $\lie_\kk$: 
\[
\xymatrix{\textit{free}\,\, \kk[-n-1]   \ar[r] \ar[d] & \mathfrak{g} \ar[d] \\
\textit{free}\,\, \left\{\textrm{cone}\,(\kk \xrightarrow{\mathrm{id}} \kk)[-n-1] \right\} \ar[r]& \mathfrak{g}'
}
\]
This pushout is obtained by making the free product of $\mathfrak{g}$ and $\textit{free}\,\, \left\{\textrm{cone}\,(\kk \xrightarrow{\mathrm{id}} \kk)[-n-1] \right\}$, and then by taking the quotient by the image of the ideal generated by $\textit{free}\,\, \kk[-n-1]$. This completes the induction step: $\mathfrak{g}'$ is still very good. 
\end{proof}
This in particular shows that $\textit{forget}$ sends good objects to reflexive objects for the dual deformation context $(\MOD_{\kk}, \kk[-n-1])$. Lastly, thanks to \cite[Proposition 2.1.16]{DAG-X}, the forgetful functor from $\LIE_{\kk}^{op} $ to $\MOD_{\kk}^{op}$ preserves small sifted limits. Thus \textbf{(C)} holds. It remains to prove \textbf{(D)}, which is the main delicate point of the proof. Recall for that purpose that, for a dgla $\mathfrak{g}$, $\theta_{\mathfrak{g}}$ is defined as the following composition, where we omit the $\textit{forget}$ functor (its appearance being obvious): 
\[
\mathfrak{L}\big(\mathrm{CE}^{\bullet}(\mathfrak{g})\big)\to \mathfrak{L}\big(\mathrm{CE}^{\bullet}(\textit{free}\,\mathfrak{g})\big)\to \mathfrak{L}(\kk\oplus\mathfrak{g}^*[-1])\to\mathfrak{g}^*\,.
\]
At this point it is important to make a rather elementary observation: the composed cdga morphism $\mathrm{CE}^{\bullet}(\mathfrak{g})\to \mathrm{CE}^{\bullet}(\textit{free}\,\mathfrak{g})\to \kk\oplus\mathfrak{g}^*[-1]$ is nothing but the projection onto the quotient by the square $\mathcal I^2$ of the augmentation ideal $\mathcal I=\ker(\mathrm{CE}^{\bullet}(\mathfrak{g})\to\kk)$ whenever $\mathfrak{g}$ is very good\footnote{Indeed, in this case the natural map $S^k(\mathfrak g^*[-1])\to S^k(\mathfrak{g}[1])^*$ is an isomorphism. Thus, forgetting the differential, $\mathrm{CE}^\bullet(\mathfrak g)=\hat{S}(\mathfrak g^*[-1])$. }. 

We now introduce the \textit{uncompleted} Chevalley--Eilenberg cdga $\widetilde{\mathrm{CE}}^{\bullet}(\mathfrak g)$ as a sub-cdga $S(\mathfrak{g}^*[-1])\subset\mathrm{CE}^{\bullet}(\mathfrak g)$ (it is obviously a graded subalgebra, and it can be easily checked as an exercise that it is stable under the differential). The quotient by the square of its augmentation ideal is still $\kk\oplus\mathfrak{g}^*[-1]$, and we have the following commuting diagram and its image through the left-most vertical adjunction of our square: 
\[
\xymatrix{ 
\widetilde{\mathrm{CE}}^{\bullet}(\mathfrak{g}) \ar[d]_{\iota_{\mathfrak{g}}}\ar[rd] & \\
\mathrm{CE}^{\bullet}(\mathfrak{g}) \ar[r] & \kk\oplus\mathfrak{g}^*[-1]
}
\qquad
\xymatrix{
\mathfrak{L}\big(\widetilde{\mathrm{CE}}^{\bullet}(\mathfrak{g})\big) \ar[d]_{\mathfrak{L}(\iota_{\mathfrak{g}})}\ar[rd]^{\widetilde{\theta}_{\mathfrak{g}}} & \\
\mathfrak{L}\big(\mathrm{CE}^{\bullet}(\mathfrak{g})\big) \ar[r]_-{\theta_{\mathfrak g}} & \mathfrak{g}^*
}
\]
In order to prove that $\theta_{\mathfrak{g}}$ is an equivalence when $\mathfrak{g}$ is good, we will prove that both $\widetilde{\theta}_{\mathfrak{g}}$ and $\mathfrak{L}(\iota_{\mathfrak{g}})$ are. 
Let us start with the following lemma: 
\begin{Lem}\label{augmentedlemma}
Let $A$ be an augmented cdga, with augmentation ideal $\mathcal J$, that is cofibrant as a cdga. 
Then the morphism $\mathfrak{L}(A)\to \mathcal J/\mathcal J^2[1]$ associated with the projection $A\to \kk\oplus \mathcal J/\mathcal J^2$ is an equivalence. 
\end{Lem}
\begin{proof}
First of all, the projection $A\to \kk\oplus \mathcal J/\mathcal J^2$ is an actual morphism in the category $\cdga_\kk$, so that we have a factorization 
\[
\mathfrak{L}(A)\simeq \mathbb{L}_{A/\kk}\otimes_A\kk[1]\to \Omega^1_{A/\kk}\otimes_A\kk[1]\to \mathcal J/\mathcal J^2[1]\,, 
\]
where: 
\begin{itemize}
\item[--] The morphism $\mathbb{L}_{A/\kk}\otimes_A\kk\to \Omega^1_{A/\kk}\otimes_A\kk$ is an equivalence because $A$ is cofibrant. 
\item[--] The second morphism is an \textbf{isomorphism} in $\mod_\kk$. 
\end{itemize}
The lemma is proved. 
\end{proof}
We then observe that when $\mathfrak{g}$ is very good (which we can always assume without loss of generality when dealing with good dglas), then $\widetilde{\mathrm{CE}}^{\bullet}(\mathfrak{g})$ is cofibrant and thus $\widetilde{\theta}_{\mathfrak g}$ is an equivalence\footnote{Here is another approach, avoiding the use of very good models. If $\mathfrak{g}$ is a dgla then there is an $L_\infty$-structure on $H^*(\mathfrak{g})$ that makes it equivalent to $\mathfrak{g}$ in $\LIE_\kk$. If moreover $\mathfrak{g}$ is good then $H^*(\mathfrak g)$ is finite dimensional and concentrated in positive degree, so that the $L_\infty$-structure has only finitely many non-trivial structure maps. Thus the uncompleted Chevalley-Eilenberg cdga $\widetilde{\mathrm{CE}}^{\bullet}\big(H^*(\mathfrak g)\big)$ can be defined, is cofibrant, and is equivalent to $\widetilde{\mathrm{CE}}^{\bullet}(\mathfrak g)$. }. 

Lastly, it can be shown that $\mathrm{CE}^{\bullet}(\mathfrak{g})$ is flat over $\widetilde{\mathrm{CE}}^{\bullet}(\mathfrak{g})$, which implies that the natural map 
\[
\mathbb{L}_{\widetilde{\mathrm{CE}}^{\bullet}(\mathfrak{g})/\kk}\otimes_{\widetilde{\mathrm{CE}}^{\bullet}(\mathfrak{g})} {\mathrm{CE}}^{\bullet}(\mathfrak{g})\longrightarrow\mathbb{L}_{{\mathrm{CE}}^{\bullet}(\mathfrak{g})/\kk}
\]
is also an equivalence, which shows (after applying $-\otimes_{{\mathrm{CE}}^{\bullet}(\mathfrak{g})}\kk$) that $\mathfrak{L}(\iota_{\mathfrak g})$ is an equivalence. 
\end{proof}

\subsection{Description of general formal moduli problems}
Assume that we are given a Koszul duality context
\[
\mathfrak D \colon (\bold C, E) \adjointHA (\bold D^{op}, F) \colon \mathfrak D'
\]
We can define a functor $\Psi \colon \bold{D} \rightarrow \bold{Fun}(\bold{C}, \SSET)$ by the composition
\[
\Psi \colon \bold{D} \xrightarrow{\mathscr{Y}} \mathbf{Fun}(\bold{D}^{op}, \SSET) \xrightarrow{\circ \,\mathfrak{D}} \mathbf{Fun}(\bold{C}, \SSET).
\]
where $\mathscr{Y}$ is the Yoneda functor $d \rightarrow \mathrm{Hom}_{\bold{D}^{op}}(d,-)$.
\begin{Thm}[Lurie \cite{DAG-X}] \label{bomb}
Given a Koszul duality context
\[
\mathfrak D \colon (\bold C, E) \adjointHA (\bold D^{op}, F) \colon \mathfrak D',
\]
The functor $\Psi$ factors through $\mathbf{FMP}(\bold{C}, E)$ and the induced functor
\[
\Psi \colon \bold{D} \rightarrow \mathbf{FMP}(\bold{C}, E)
\] 
is an equivalence.
\end{Thm}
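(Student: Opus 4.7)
The plan is to show that $\Psi$ takes values in $\mathbf{FMP}(\bold{C},E)$, and then to establish that the resulting functor $\Psi \colon \bold{D}\to\mathbf{FMP}(\bold{C},E)$ is both fully faithful and essentially surjective, the latter being where the real work lies.

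\textbf{Factorization through FMPs.} Since $*_{\bold{C}}$ is small and the adjunction restricts to an equivalence $\mathfrak{D}\colon (\bold{C},E)^{sm}\simeq \big((\bold{D},F)^{gd}\big)^{op}$ by Proposition~\ref{yolo}(C), we have $\mathfrak{D}(*_{\bold{C}})\simeq \varnothing_{\bold{D}}$ (the terminal object of $\bold{D}^{op}$), hence $\Psi(d)(*_{\bold{C}})\simeq \mathrm{Hom}_{\bold{D}}(\varnothing,d)$ is contractible. For a small morphism $f\colon A\to B$ with $A,B,M,N$ small, Proposition~\ref{yolo}(D) turns the pullback in $\bold{C}$ into a pullback in $\bold{D}^{op}$, and the representable functor $\mathrm{Hom}_{\bold{D}^{op}}(d,-)$ then converts it back into a pullback in $\SSET$.

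\textbf{Construction of a left adjoint $\Phi$ and full faithfulness.} Using Proposition~\ref{yolo}(C) to identify $(\bold{D}^{gd})^{op}\simeq\bold{C}^{sm}$, the composition of $\Psi$ with restriction $\mathbf{FMP}(\bold{C},E)\hookrightarrow \mathbf{Fun}(\bold{C}^{sm},\SSET)$ becomes the Yoneda-type functor $d\mapsto \mathrm{Hom}_{\bold{D}}(-,d)$ restricted to $\bold{D}^{gd}\subset\bold{D}$. I would then construct $\Phi$ as the left Kan extension along this restricted Yoneda, and check that the unit $d\to \Phi\Psi(d)$ is an equivalence by applying the conservative functor $\Theta$ of the Koszul duality context. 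The point is that $\Theta(d)_n=\mathrm{Hom}_{\bold{D}}(F_n,d)=\Psi(d)(E_n)$ is preserved strictly by the construction; combined with the fact that $\Theta$ commutes with sifted colimits, this pins down $\Phi\Psi(d)\simeq d$ and yields full faithfulness.

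\textbf{Essential surjectivity.} This is the main obstacle. Given a formal moduli problem $X$, one constructs $\Phi(X)\in \bold{D}$ as the colimit, over the category of pairs $(c,x)$ with $c\in\bold{C}^{sm}$ and $x\in X(c)$, of the good objects $\mathfrak{D}(c)$; equivalently, $\Phi$ is the unique colimit-preserving extension of $\mathfrak{D}\colon \bold{C}^{sm}\to\bold{D}^{gd}\subset\bold{D}$ along the restricted Yoneda $\bold{C}^{sm}\hookrightarrow\mathbf{FMP}(\bold{C},E)$. The substantive step is then the counit $\Psi(\Phi(X))\to X$, which one attacks by an induction along the cellular tower presenting each small object as a sequence of elementary extensions of $*_{\bold{C}}$. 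At each step, Proposition~\ref{yolo}(D) identifies the relevant square, the pullback property of $X$ identifies the required fiber, and the sifted-colimit preservation of $\Theta$ allows one to commute $\Psi$ with the colimit defining $\Phi(X)$ after evaluation on the generating spectrum objects $E_n$. Conservativity of $\Theta$ then upgrades this to a genuine equivalence $\Psi(\Phi(X))\simeq X$. The delicate point — and this is where I expect the main technical resistance — is verifying that $\Phi(X)$ defined as a colimit of good objects really does satisfy $\Theta(\Phi(X))_n\simeq X(E_n)$; this relies crucially on both conservativity and sifted-cocontinuity of $\Theta$, and is the abstract incarnation of Lurie's original cellular argument for dg-Lie algebras.
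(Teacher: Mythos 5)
Your overall skeleton does parallel the paper's sketch (factorization through $\mathbf{FMP}(\bold{C},E)$ via Proposition \ref{yolo}(D), existence of a left adjoint $\Phi$, identification $\Phi(\mathrm{Hom}_{\bold{C}}(c,-))\simeq\mathfrak{D}(c)$ for small $c$), but there is a genuine gap exactly at the step you yourself flag as delicate: proving $\Theta(\Phi X)_n\simeq X(E_n)$ (equivalently, that the unit $X\to\Psi\Phi X$ is an equivalence) for an \emph{arbitrary} fmp $X$. The mechanism you propose --- ``sifted-colimit preservation of $\Theta$ allows one to commute \dots with the colimit defining $\Phi(X)$'' --- does not work as stated. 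The colimit presenting $\Phi(X)$ is indexed by the category of points $(c,x)$ with $c\in\bold{C}^{sm}$, and this indexing category is not sifted in general (you give no argument that it is), so sifted-cocontinuity of $\Theta$ cannot be applied to it directly. Moreover, even after writing $X$ as the colimit in $\mathbf{FMP}(\bold{C},E)$ of the corepresentables $\mathrm{Hom}_{\bold{C}}(c,-)$, you cannot evaluate that colimit at $E_n$ termwise, because the inclusion $\mathbf{FMP}(\bold{C},E)\hookrightarrow\mathbf{Fun}(\bold{C}^{sm},\SSET)$ does not preserve colimits; so the co-Yoneda computation does not close the loop. The cellular-tower induction you invoke settles a different and easier point --- that a map of fmps inducing equivalences on all values $X(E_n)$ is an equivalence --- not the computation of $\Theta\Phi X$. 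The missing idea, which the paper explicitly names as the technical heart (following Lurie), is the resolution of an arbitrary fmp by prorepresentable ones via hypercoverings: this replaces the non-sifted category of points by a diagram built from geometric realizations and filtered colimits, on which $\Theta\Phi$ can genuinely be computed starting from the representable case, where Proposition \ref{yolo}(B) gives the unit equivalence.

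Two further remarks. Your ``full faithfulness'' step silently relies on the same unproven identification (applied to $X=\Psi(d)$), so it is not an independent half of the argument; and the unit and counit of $\Phi\dashv\Psi$ are swapped throughout your write-up. The paper's route is also more economical here: it first checks that $\Psi$ is conservative (using $\mathfrak{D}(E_n)\simeq F_n$ and conservativity of $\Theta$), after which only the unit $\mathrm{id}\Rightarrow\Psi\circ\Phi$ needs to be shown to be an equivalence --- the counit then comes for free from the triangle identities.
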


\begin{proof}[Sketch of proof]
The first point is a direct consequence of Proposition \ref{yolo} (\textbf{D}). The proof that $\Psi$ is an equivalence proceeds on several steps.  \vspace{0.2cm}
\begin{enumerate}
\item[--] The first step consists of proving that $\Psi$ is conservative. This follows almost immediately from the hypotheses. Indeed, let $f \colon X \rightarrow Y$ an arrow in $\bold{D}$ inducing isomorphic formal moduli problems. We have
$\Psi(X)=\mathrm{Hom}_{\bold{D}}(\mathfrak{D}(*), X)$ and similarly for $Y$. Since all $E_n$ are small and $\mathfrak{D}(E_n) \simeq F_n$, $f$ induces an equivalence of spectra
\[
\mathrm{Hom}_{\bold{D}}(F_n, X) \simeq \mathrm{Hom}_{\bold{D}}(F_n, Y) .
\]
Since $\mathrm{Hom}_{\mathbf{D}}(F_n,-) \colon \mathbf{D} \rightarrow \SPA$ is conservative, $f$ is an equivalence.
\vspace{0.2cm}
\item[--] The next step consists in proving that $\Psi$ commutes with limits and accessible colimits, so it has a left adjoint. This part is elementary, and uses the same techniques as in Proposition \ref{yolo}. We denote this left adjoint by $\Phi$.
\vspace{0.2cm}
\item[--] The functor $\Psi$ being conservative, it suffices to prove that the unit $\mathrm{id}_{\mathbf{FMP}(\bold{C}, E)} \Rightarrow \Psi \circ \Phi $ is an equivalence. This is the most technical part in the proof: it involves hypercoverings to reduce to pro-representable moduli problems; this is where the condition on \textit{sifted} colimits plays a role. Here pro-representable means small limit of fmp representables by small objects.
\vspace{0.2cm}
\item[--] In the representable case, we can explain what happens: we have an adjunction diagram
\[
\xymatrix{
\bold{D} \ar@<-2pt>[rr]_-{\Psi} && \ar@<-2pt>[ll]_-{\Phi} \bold{FMP}(\bold{C}, E) \\
& \bold{C}^{op} \ar[ur]_-{\mathscr{Y}}  \ar@<-2pt>[lu]^-{\mathfrak{D}^{op}}  &
}
\]
where $\mathscr{Y}$ is the Yoneda functor $c \rightarrow \mathrm{Hom}_{\bold{C}}(c,-)$. We claim that $\Phi \circ \mathscr{Y}\simeq\mathfrak{D}^{op}$ on small objects, that is on $(\bold{C}^{sm})^{op}$. Indeed, for any objects $c$ and $d$ of $\bold{C}^{sm}$ and $\bold{D}$ respectively,
\begin{align*}
\mathrm{Hom}_{\bold{D}}(\Phi(\mathscr{Y}_c), d) &\simeq \mathrm{Hom}_{\bold{FMP}(\bold{C}, E)}(\mathscr{Y}_c, \Psi(d)) \\
&= \mathrm{Hom}_{\bold{Fun}({\bold{C}}^{sm}, E)}(\mathscr{Y}_c, \Psi(d)) \\
&= \psi(d)(c) \\
&= \mathrm{Hom}_{\bold{D}^{op}}(d, \mathfrak{D}(c)) \\
&=  \mathrm{Hom}_{\bold{D}}(\mathfrak{D}^{op}(c), d) 
\end{align*}
Using this, the unit map of $\mathscr{Y}_c$ is given by the unit map of the adjunction between $\mathfrak{D}$ and $\mathfrak{D}'$ via the natural equivalence
\[
\mathscr{Y}_c \rightarrow \Psi \circ \Phi (\mathscr{Y}_c) \simeq\Psi (\mathfrak{D}^{op}(c))=\mathrm{Hom}_{\bold D^{op}}(\mathfrak{D}(c), \mathfrak{D}(-)) \simeq \mathrm{Hom}_{\bold{C}}(c, \mathfrak{D}' \mathfrak{D}(-)).
\]
Using Proposition \ref{yolo} \textbf{(B)}, we see that this map is an equivalence of formal moduli problems.
\end{enumerate}
\end{proof}

\subsection{The tangent complex}
In this section, we introduce the tangent complex associated to a formal moduli problem. We start with a very general definition.

\begin{Def}
Let $(\bold C, E)$ be a deformation context. For any fmp $X$ in $\bold{FMP}(\bold{C}, E)$, its tangent complex $\mathbb{T}_X$ is the spectrum $X(E)=X(E_n)_n$.
\end{Def}

\begin{Rem}
There is a slight subtelty in the definition of $\mathbb{T}_X$, as $X(E_n)$ is only defined for $n \geq 0$. 
However, this suffices to define it uniquely as a spectra, by putting $(\mathbb{T}_X)_{-m}:=\Omega^{m}_*\big((\mathbb{T}_X)_0\big)$.
\end{Rem}
Assume now to be given a Koszul duality context
\[
\mathfrak D \colon (\bold C, E) \adjointHA (\bold D^{op}, F) \colon \mathfrak D'.
\]
Then we have the following result: 

\begin{Prop} \label{olan}
The following diagram commutes: 
\[
\xymatrix{
\bold D \ar[rr]^-{\Psi}  \ar[rd]_-{\Theta} && \mathbf{FMP}(\bold C, E) \ar[ld]^-{\mathbb{T}}  \\
&\SPA^{\vphantom{a}}&
}
\]
\end{Prop}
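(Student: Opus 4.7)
My plan is to identify $\mathbb{T}\circ\Psi$ with $\Theta$ level by level, and then check that the resulting identification is compatible with the spectrum structure maps. Fix $d\in\mathbf{D}$. For each $n\geq 0$ I unfold directly from the definitions:
\[
(\mathbb{T}_{\Psi(d)})_n \;=\; \Psi(d)(E_n) \;=\; \mathrm{Hom}_{\mathbf{D}^{op}}\bigl(d,\mathfrak{D}(E_n)\bigr) \;\simeq\; \mathrm{Hom}_{\mathbf{D}}\bigl(\mathfrak{D}(E_n),d\bigr),
\]
and then I apply Proposition \ref{yolo} (A), which supplies $\mathfrak{D}(E_n)\simeq F_n$. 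This yields an equivalence $(\mathbb{T}_{\Psi(d)})_n\simeq \mathrm{Hom}_{\mathbf{D}}(F_n,d)=\Theta(d)_n$, natural in $d$.

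Next I would promote this family of levelwise equivalences to an equivalence of spectra. The structure map $(\mathbb{T}_{\Psi(d)})_n\simeq \Omega(\mathbb{T}_{\Psi(d)})_{n+1}$ comes from the defining pullback square
\[
\xymatrix{E_n \ar[r]\ar[d] & * \ar[d] \\ * \ar[r] & E_{n+1}}
\]
in $\mathbf{C}$ together with the fact that $\Psi(d)$, being an fmp, preserves this pullback along the elementary morphism $*\to E_{n+1}$. On the other side, Proposition \ref{yolo} (D) guarantees that $\mathfrak{D}$ sends the same square to a pullback in $\mathbf{D}^{op}$, which exhibits $\mathfrak{D}(E_{n+1})$ as a delooping of $\mathfrak{D}(E_n)$ in $\mathbf{D}^{op}$ and thereby assembles the collection $\bigl(\mathfrak{D}(E_n)\bigr)_n$ into a spectrum object of $\mathbf{D}^{op}$ equivalent to $F$.

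The only point requiring care is to check that the two spectrum structures --- the one on $\mathbb{T}_{\Psi(d)}$ coming from $\Psi(d)$ preserving the pullback, and the one on $\Theta(d)$ induced by $F\in\mathrm{Stab}(\mathbf{D}^{op})$ --- agree under these levelwise equivalences. I expect this to be the main bookkeeping step, but not a genuine obstacle, because both structures are obtained by applying the Hom functor $\mathrm{Hom}_{\mathbf{D}}(-,d)$ to the image of the single pullback diagram above: on one side, after first applying $\Psi(d)=\mathrm{Hom}_{\mathbf{D}}(\mathfrak{D}(-),d)$; on the other, after first applying $\mathfrak{D}$ and invoking $\mathfrak{D}(E_n)\simeq F_n$. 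The required coherence is therefore encoded in the naturality of the equivalences supplied by Proposition \ref{yolo} (A) and (D), and no further input is needed.
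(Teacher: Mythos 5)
Your proposal is correct and follows essentially the same route as the paper: the paper's proof is exactly the levelwise computation $(\mathbb{T}_{\Psi(d)})_n=\mathrm{Hom}_{\bold{D}^{op}}(d,\mathfrak{D}(E_n))\simeq\mathrm{Hom}_{\bold{D}}(F_n,d)=\Theta(d)_n$ using Proposition \ref{yolo} (A). Your additional verification that the levelwise equivalences are compatible with the spectrum structure maps (via Proposition \ref{yolo} (D) and the fmp property of $\Psi(d)$) is a legitimate piece of bookkeeping that the paper simply leaves implicit.
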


\begin{proof}
This is straightforward: given an object $d$ in $D$, $\Psi(d)=\mathrm{Hom}_{\bold{D}^{op}}(d, \mathfrak D(*))$, so
\[
\mathbb{T}_{\Psi(d)}=\mathrm{Hom}_{\bold{D}^{op}}(d,\mathfrak D(E_n))=\mathrm{Hom}_{\bold{D}}(F_n, d)=\Theta(d).
\] 
\end{proof}

\begin{Rem}
If $\bold{D}$ is $\kk$-linear (resp.~$A$-linear), then $\Theta$ actually lifts to $\MOD_{\kk}$ (resp.~$\MOD_A$): indeed, replacing $\mathrm{Hom}_{\bold D}(F_n,-)$ by its enriched version $\mathrm{HOM}_{\bold D}(F_n,-)$ gives us the lift of $\Theta$. 
\end{Rem}
\begin{Exa}
Going back to Example \ref{Koszul-modules} we get that, for an $A$-module $M$, 
$$
\Theta(M)=\mathrm{Hom}_{\MOD_A}(A[-n-1],M)_n\,.
$$
In the $\MOD_A$-enriched version, we have 
$\Theta(M)=(M[n+1])_n$ which gives $\Theta(M)\simeq M[1]$ \textit{via} the canonical identification $\mathrm{Stab}(\MOD_A)\simeq \MOD_A$. 
\end{Exa}
We now deal with functoriality: 
\begin{Prop}\label{functorialprop}
Assume to be given a weak morphism 
\[
\xymatrix@C=60pt@R=30pt{
\bold C_1 \ar@<-2pt>[d]_-{S} \ar@<2pt>[r]^-{\mathfrak D_1} &   \ar@<2pt>[l]^-{\mathfrak D_1'} \bold{D}_1^{op} \ar@<-2pt>[d]_-{Y} \\
\bold C_2\ar@<-2pt>[u]_-{T} \ar@<2pt>[r]^-{\mathfrak D_2}  &  \bold{D}_2^{op}  \ar@<2pt>[l]^-{\mathfrak D_2'} \ar@<-2pt>[u]_-{Z}
}
\]
between two Koszul duality contexts. Then there is an induced commuting diagram
\[
\xymatrix@C=40pt@R=20pt{
\bold D_1 \ar[r]^-{\sim} \ar[dd]^{Y^{op}}& \bold{FMP}\,(\bold{C}_1, E_1) \ar[dd]^-{\circ T} \ar[rd]^-{\mathbb{T}}& \\
&&\SPA \\
\bold D_2 \ar[r]^-{\sim} & \bold{FMP}\,(\bold{C}_2, E_2) \ar[ru]_-{\mathbb{T}}&
}
\]
In particular, the functor $\bold{FMP}\,(\bold{C}_1, E_1) \rightarrow \bold{FMP}\,(\bold{C}_2, E_2)$ is conservative.
\end{Prop}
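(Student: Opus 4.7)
The plan is to identify the horizontal equivalences as the functors $\Psi_1$ and $\Psi_2$ provided by Theorem \ref{bomb}, and then reduce the commutativity of the left-hand square and the two right-hand triangles to formal manipulations with the four adjunctions at hand, using only the data (A) and (B) of a weak morphism.

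First, I would verify that the left square commutes. Fix $d \in \bold D_1$ and $c \in \bold C_2$. Unwinding definitions,
\[
\Psi_2(Y^{op}d)(c) = \mathrm{Hom}_{\bold D_2^{op}}\bigl(Y(d),\mathfrak D_2(c)\bigr).
\]
Applying the adjunction $Y \dashv Z$ between $\bold D_1^{op}$ and $\bold D_2^{op}$ gives $\mathrm{Hom}_{\bold D_1^{op}}\bigl(d,Z\mathfrak D_2(c)\bigr)$, and then the natural equivalence $Z\circ \mathfrak D_2 \simeq \mathfrak D_1 \circ T$ (the commutativity of right adjoints, i.e.~condition (A) of a weak morphism) turns this into
\[
\mathrm{Hom}_{\bold D_1^{op}}\bigl(d,\mathfrak D_1 T(c)\bigr) = \Psi_1(d)(T(c)) = \bigl(\Psi_1(d)\circ T\bigr)(c).
\]
These identifications are natural in $c$ and $d$, yielding $\Psi_2\circ Y^{op} \simeq (\circ T)\circ \Psi_1$.

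Next, for the right-hand triangles, I would invoke Proposition \ref{olan}, which gives $\mathbb T \circ \Psi_i \simeq \Theta_i$ for $i=1,2$. Commutativity of the whole diagram therefore reduces to the natural equivalence $\Theta_1 \simeq \Theta_2 \circ Y^{op}$; but this is precisely the computation already carried out at the beginning of the proof of Proposition \ref{suppositoire}, combining $Y \dashv Z$ with condition (B), namely $Z(F_{2,n}) \simeq F_{1,n}$. Finally, the conservativity of $\circ T\colon \bold{FMP}(\bold C_1,E_1)\to \bold{FMP}(\bold C_2,E_2)$ is obtained by transporting across the equivalences $\Psi_i$: it is identified with $Y^{op}\colon \bold D_1 \to \bold D_2$, which is conservative because $Y$ is (this is where one promotes the hypothesis from a weak morphism to a genuine morphism, using condition (C) of Definition \ref{def-Koszulmorphism}).

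No step involves serious technical difficulty; the entire argument is a diagram chase. The only mildly delicate point is being careful with the $(-)^{op}$'s when passing between $\bold D_i$ and $\bold D_i^{op}$, so that the adjunction $Y \dashv Z$ and the equivalence $Z\mathfrak D_2 \simeq \mathfrak D_1 T$ are applied on the correct side; once that bookkeeping is done the equivalences are automatic.
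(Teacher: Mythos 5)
There is a genuine gap at the heart of your argument for the left square. You replace $Z\mathfrak D_2(c)$ by $\mathfrak D_1 T(c)$ and justify this by ``the commutativity of right adjoints, i.e.~condition (A)''. But condition (A) states that $T\mathfrak D_2'\simeq \mathfrak D_1' Z$ (all four of $T,Z,\mathfrak D_1',\mathfrak D_2'$ are right adjoints), and its formal consequence is the commutation of the square of \emph{left} adjoints, $\mathfrak D_2 S\simeq Y\mathfrak D_1$. The composites $Z\mathfrak D_2$ and $\mathfrak D_1 T$ mix a right adjoint with a left adjoint, and there is no formal reason for them to agree; this is exactly the delicate point. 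The paper's proof constructs the mate $\mathfrak D_1 T\Rightarrow Z\mathfrak D_2$ of the given equivalence and proves it is an equivalence \emph{on small objects only}, using that both rows are genuine Koszul duality contexts: the unit $\mathrm{id}\Rightarrow\mathfrak D_2'\mathfrak D_2$ is an equivalence on smalls, $Z\mathfrak D_2$ carries smalls to goods (one checks that $Z$ preserves good objects, by the same right-adjoint argument showing $T$ preserves smalls), and the counit $\mathfrak D_1\mathfrak D_1'\Rightarrow\mathrm{id}$ is an equivalence on goods. Since fmps are functors on small objects, this restricted equivalence suffices to run your $\mathrm{Hom}$-computation, but without it the computation does not go through; as written, your step would fail.

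Two secondary points. First, part of the proposition's content is that $-\circ T$ is well defined, i.e.~sends fmps for $(\bold C_1,E_1)$ to fmps for $(\bold C_2,E_2)$; this requires checking $T(E_{2,n})\simeq E_{1,n}$ (via condition (A) applied to $F_{2,n}$ and condition (B)) and that $T$, being a right adjoint, preserves the relevant pullbacks and small objects — your proposal skips this. Second, the hypothesis is only a \emph{weak} morphism between Koszul duality contexts, so condition (C) of Definition \ref{def-Koszulmorphism} (conservativity of $Y$) is not available; you cannot ``promote'' the hypothesis. Conservativity of $-\circ T$ instead follows from what you already have: the identity $\Theta_1\simeq\Theta_2\circ Y^{op}$ (from $Y\dashv Z$ and condition (B)) together with conservativity of $\Theta_1$, or equivalently from the commuting tangent-complex triangle plus the fact that $\mathbb T$ is conservative on $\bold{FMP}(\bold C_1,E_1)$.
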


\begin{proof}
We begin by proving that $T^*:=-\circ T:\mathbf{Fun}(\bold C_1,\mathbf{sSet})\to \mathbf{Fun}(\bold C_2,\mathbf{sSet})$ indeed defines a functor $\bold{FMP}\,(\bold{C}_1, E_1)\to  \bold{FMP}\,(\bold{C}_2, E_2)$: 
\begin{itemize}
\item[(1)] First, for every $n\geq0$: $T(E_{n,2})\simeq T\mathfrak{D}'_2(F_{n,2})\simeq \mathfrak{D}_1'Z(F_{n,2})\simeq\mathfrak{D}_1'(F_{n,1})\simeq E_{n,1}$. 
\item[(2)] Then, $T$ being a right adjoint it preserves in particular pull-backs along $* \to E_{n,2}$, and thus send them to pull-backs along $*\to E_{n,1}$ whenever $n\geq0$. Hence it sends small objects to small objects. 
\item[(3)] Finally, let $F$ be an fmp for $(\bold{C}_1,E_1)$. Then $F\circ T(*)\simeq F(*)\simeq *$ (because $T$ is a right adjoint and $F$ is an fmp), and $F\circ T$ preserves pull-back along $* \to E_{n,2}$ (thanks to the second point and that $F$ is an fmp).  
\end{itemize}
Note that $Z$ also sends good objects to good objects (the proof is the same as for the second point above). 

We now come to the proof of the commutativity of the square, which is essentially based on the following Lemma: 
\begin{Lem}
There is a natural transformation $\mathfrak{D}_1T\Rightarrow Z\mathfrak{D}_2$ that is an equivalence on small objects. 
\end{Lem}
\begin{proof}[Proof of the Lemma]
The commutativity of the square of right adjoints tells us there is a natural equivalence $T\mathfrak{D}_2'\cong \mathfrak{D}_1'Z$. Hence we have a mate $\mathfrak{D}_1T\Rightarrow Z\mathfrak{D}_2$ defined as the composition 
$$
\mathfrak{D}_1T\Rightarrow \mathfrak{D}_1T\mathfrak{D}_2'\mathfrak{D}_2\cong\mathfrak{D}_1\mathfrak{D}_1'Z\mathfrak{D}_2\Rightarrow Z\mathfrak{D}_2\,,
$$
that can also be depicted as \raisebox{-1ex}{~\begin{tikzpicture}
\draw [>=stealth,<-] (0.4,0.4) to (0,0.4) ;
\draw [>=stealth,<-] (0,0.4) to (0,0) ;
\draw [>=stealth,thin,double,->] (0.5,0.2) to (0.9,0.2) ;
\draw [>=stealth,<-] (1.4,0.4) to (1,0.4) ;
\draw [>=stealth,<-] (1,0.4) to (1,0.1) ;
\draw [>=stealth,<-] (1,0.1) to (1.4,0.1) to (1.4,0) to (1,0) ;
\draw [>=stealth,thin,double,<->] (1.5,0.2) to (1.9,0.2) ;
\draw [>=stealth,<-] (2.4,0.4) to (2,0.4) to (2,0.3) to (2.4,0.3) ;
\draw [>=stealth,<-] (2.4,0.3) to (2.4,0);
\draw [>=stealth,<-] (2.4,0) to (2,0) ;
\draw [>=stealth,thin,double,->] (2.5,0.2) to (2.9,0.2) ;
\draw [>=stealth,<-] (3.4,0.4) to (3.4,0) ;
\draw [>=stealth,<-] (3.4,0) to (3,0) ;
\end{tikzpicture}~}. 
We then observe that 
\begin{itemize}
\item[--] on small objects, the unit $id\Rightarrow \mathfrak{D}_2'\mathfrak{D}_2$ is an equivalence. 
\item[--] $Z\mathfrak{D}_2$ sends small objects to good objects ($\mathfrak{D}_2$ realizes an equivalence between smalls and goods, and $Z$ preserves the goods). 
\item[--] on good objects, the co-unit $\mathfrak{D}_1\mathfrak{D}_1'\Rightarrow id$ is an equivalence. 
\end{itemize}
Hence the mate $\mathfrak{D}_1T\Rightarrow Z\mathfrak{D}_2$ is an equivalence on small objects. 
\end{proof}
The commutativity of the square then reads as follows (recall that we are reasoning on the category of small objects): 
\begin{eqnarray*}
T^*\Psi_1 & = & \mathrm{Hom}_{\bold D_1^{op}}\big(-,\mathfrak D_1T(-)\big)\quad\textrm{(by definition)} \\
& \simeq & \mathrm{Hom}_{\bold D_1^{op}}\big(-,Z\mathfrak D_2(-)\big)\quad\textrm{(using the mate)} \\
& \simeq & \mathrm{Hom}_{\bold D_2^{op}}\big(Y(-),\mathfrak D_2(-)\big)\quad\textrm{(by adjunction)} \\
& = &  \Psi_2Y^{op}\quad\textrm{(by definition).}
\end{eqnarray*}
Finally, we have to prove that the triangle commutes. This is obvious: $X\circ T(E_{n,2})\simeq X(E_{n,1})$. 
\end{proof}
\begin{Exa}
Let us see what it implies for our preferred morphism of Koszul duality context:
\[
\xymatrix@C=100pt@R=50pt{
  \CALG_{\kk} \ar@<-2pt>[d]_-{\mathfrak{L}} \ar@<2pt>[r]^-{\mathfrak{D}}&   \ar@<2pt>[l]^-{\mathrm{CE}^{\bullet}} \LIE_{\kk}^{op} \ar@<-2pt>[d]_-{\textit{forget}} \\
\MOD_{\kk}\ar@<-2pt>[u]_-{V \mapsto \kk \oplus V[-1]} \ar@<2pt>[r]^-{\mathrm{Hom}(-, \kk)}  & \MOD_{\kk}^{op}  \ar@<2pt>[l]^-{\mathrm{Hom}(-, \kk)} \ar@<-2pt>[u]_-{\textit{free}}
}
\]
Note that we are in the $\kk$-linear situation, hence viewing the tangent complex as an object in $\MOD_\kk$. Let $\vartheta$ denote the functor $V \mapsto \kk \oplus V[-1]$. 
If $X$ is in $\bold{FMP}_{\kk}$, then $X\circ\vartheta$ belongs to $\bold{FMP}\big(\MOD_\kk,(\kk[n+1])_n\big)$, and thus $\mathbb{T}_X=\mathbb{T}_{X\circ\vartheta}$. Now: 
\[
\mathbb{T}_{X\circ \vartheta}=\Theta_2\Phi_2(X\circ \vartheta)=\Phi_2(X\circ \vartheta)[1]=\textit{forget}\,\Phi_1(X)[1]\,.
\] 
Hence we obtain the following beautiful result, explaining one of the phenomenon mentionned in the introduction: 
\par \medskip
\begin{center}
\fbox{\textit{The underlying complex of the dgla $\mathfrak{g}_X:=\Phi_1(X)$ attached to $X$ is $\mathbb{T}_X[-1]$.}}
\end{center}
\end{Exa}
The following proposition describes the tangent complex of a representable moduli problem: 
\begin{Prop}\label{prop-tangent-repr}
Let $\underline{A}$ be a representable element in $\bold{FMP}_\kk$. Then its tangent complex $\mathbb{T}_{\underline{A}}$, considered in $\MOD_\kk$, is equivalent to $\mathbb{T}_{A}\otimes_A\kk$, with $\mathbb{T}_{A}:=(\mathbb{L}_{A/k})^\vee$. 
\end{Prop}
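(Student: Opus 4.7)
The plan is to compute $\mathbb{T}_{\underline{A}}$ directly, using only the universal property of the cotangent complex and the explicit form of the spectrum object $E=(\kk\oplus\kk[n])_n$. Unwinding the definition of the tangent complex, one has
\[
(\mathbb{T}_{\underline{A}})_n \;=\; \underline{A}(\kk\oplus\kk[n]) \;=\; \mathrm{Map}_{\CALG_\kk}\!\bigl(A,\kk\oplus\kk[n]\bigr).
\]
The right-hand side is, by the very defining (co)representability property of the relative cotangent complex, the space of $\kk$-linear derivations $A\to \kk[n]$ over the augmentation, i.e.~$\mathrm{Map}_{\MOD_A}(\mathbb{L}_{A/\kk},\kk[n])$, where $\kk$ is viewed as an $A$-module through the augmentation $A\to\kk$. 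Base change along this augmentation then gives
\[
\mathrm{Map}_{\MOD_A}\!\bigl(\mathbb{L}_{A/\kk},\kk[n]\bigr)\;\simeq\;\mathrm{Map}_{\MOD_\kk}\!\bigl(\mathbb{L}_{A/\kk}\otimes_A^{\mathbb{L}}\kk,\,\kk[n]\bigr).
\]

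The next step is to convert the spectrum object into an honest $\kk$-module via the equivalence $\mathrm{Stab}(\MOD_\kk)\simeq\MOD_\kk$ from Example \ref{examod}, using the lift of $\Theta$ to $\MOD_\kk$ mentioned right after Proposition \ref{olan}. The assignment $M\mapsto (M[n])_n$ identifies the spectrum above with the $\kk$-module $\mathrm{Hom}_\kk(\mathbb{L}_{A/\kk}\otimes_A\kk,\kk)$. Equivalently, by the usual adjunction, this is $\mathrm{Hom}_A(\mathbb{L}_{A/\kk},\kk)$.

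It remains to identify this with $\mathbb{T}_A\otimes_A\kk=\mathrm{Hom}_A(\mathbb{L}_{A/\kk},A)\otimes_A\kk$. This is where the representability/smallness of $A$ enters: when $\mathbb{L}_{A/\kk}$ is dualizable (perfect) in $\MOD_A$ — which holds for any small $A$, hence for the pro-representing objects appearing in $\bold{FMP}_\kk$ — the natural base change map
\[
\mathrm{Hom}_A(\mathbb{L}_{A/\kk},A)\otimes_A\kk \;\longrightarrow\; \mathrm{Hom}_A(\mathbb{L}_{A/\kk},\kk)
\]
is an equivalence, concluding the proof. I expect this last step to be the main (minor) obstacle: one must either invoke perfectness of the cotangent complex in the small case and argue by pro-representability, or interpret $\mathbb{T}_A\otimes_A\kk$ formally. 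As a sanity check, the same answer can be recovered \emph{via} the machinery already built: by the ``beautiful result'' highlighted before Proposition \ref{prop-tangent-repr}, $\mathbb{T}_{\underline{A}}[-1]$ is the underlying complex of $\mathfrak{D}(A)\in\LIE_\kk$, and the commuting square of left adjoints in the proof of Proposition \ref{opinel} identifies this with $\mathrm{Hom}_\kk(\mathfrak{L}(A),\kk)=\mathrm{Hom}_\kk(\mathbb{L}_{A/\kk}\otimes_A\kk[1],\kk)$, giving the same conclusion after a shift.
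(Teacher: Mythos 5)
Your computation is essentially the paper's own proof: both unwind $(\mathbb{T}_{\underline{A}})_n=\mathrm{Hom}_{\CALG_\kk}(A,\kk\oplus\kk[n])$ via the cotangent-complex adjunction (the paper phrases it through $\mathfrak{L}(A)=\mathbb{L}_{\kk/A}\simeq\mathbb{L}_{A/\kk}\otimes_A\kk[1]$, you through derivations over the augmentation, which is the same identification) and then read off $\mathbb{T}_A\otimes_A\kk$ after stabilization. The only difference is that you make the final base-change step $\mathrm{Hom}_A(\mathbb{L}_{A/\kk},A)\otimes_A\kk\to\mathrm{Hom}_A(\mathbb{L}_{A/\kk},\kk)$ explicit where the paper performs it silently; note only that a representable fmp may be represented by a non-small augmented cdga (e.g.\ $A\otimes_\kk A$ in the relative setting), so the honest hypothesis for that step is perfectness/reflexivity of $\mathbb{L}_{A/\kk}$ rather than smallness of $A$.
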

\begin{proof}
It is a simple calculation: 
\begin{align*}
(\mathbb{T}_{\underline{A}})_n:=\mathrm{Hom}_{\CALG_\kk}(A, \kk \oplus \kk[n]) &\simeq \mathrm{Hom}_{\MOD_\kk}(\mathbb{L}_{\kk/A}, \kk[n+1]) \\
& \simeq  \mathrm{Hom}_{\MOD_\kk}(\mathbb{L}_{A/\kk}\otimes_A\kk[1], \kk[n+1]) \\
& \simeq \mathbb{T}_A\otimes_A\kk[n]
\end{align*}
Hence, using the canonical identification $\mathrm{Stab}(\MOD_\kk)\simeq \MOD_\kk$ we get that $\mathbb{T}_{\underline{A}}\simeq \mathbb{T}_A\otimes_A\kk$. 
\end{proof}
As a consequence, we get that $\mathbb{T}_{\kk/A}\simeq\mathbb{T}_A\otimes_A\kk[-1]$ carries a dgla structure. 

\section{DG-Lie algebroids and formal moduli problems under $Spec(A)$}

In this part, $A$ will denote a fixed cdga over $\kk$ that is concentrated in non-positive degree and cohomologically bounded. In what follows the boundedness hypothesis is important; we will explain precisely where it has to be used.

\subsection{Split formal moduli problems under $Spec(A)$} \label{easy}

One of the main purpose of the work \cite{He} (in the local case) is to prove that the equivalence provided by Theorem \ref{yolo} can be extended when replacing the ground field $\kk$ by $A$. We have (\textit{see} Lemma \ref{gioberney})
\[
\mathrm{Stab}\,(\CDGA_{A/A}) \simeq \MOD_A.
\]
We consider the deformation context $\big(\CDGA_{A/A}, (A \oplus A[n])_n\big)$. Then Hennion's result runs as follows:
\begin{Thm}[Hennion \cite{He}]
If $A$ is a cohomologically bounded cdga concentrated in non-positive degree, there is an isomorphism
\[
\bold{FMP}_{A/A} \simeq \LIE_{A}\,,
\]
where $\bold{FMP}_{A/A}:=\bold{FMP}\big(\CDGA_{A/A}, (A \oplus A[n])_n\big)$.
\end{Thm}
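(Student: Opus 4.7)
The plan is to construct a Koszul duality context
\[
\mathfrak{D}_A \colon (\CDGA_{A/A}, (A \oplus A[n])_n) \adjointHA (\LIE_A^{op}, \textit{free}\, A[-n-1]) \colon \mathrm{CE}^\bullet_A
\]
mirroring the absolute one of Proposition \ref{opinel}, and then conclude via Theorem \ref{bomb}. Here $\mathrm{CE}^\bullet_A$ denotes the relative Chevalley--Eilenberg functor sending an $A$-dgla $\mathfrak{g}$ to the completed symmetric $A$-algebra $\hat{S}_A(\mathfrak{g}^\vee[-1])$ equipped with its standard Chevalley differential and augmentation to $A$. It preserves weak equivalences, and its opposite commutes with small colimits (since $\LIE_A$ is presentable), so it admits a left adjoint $\mathfrak{D}_A$.

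To verify the Koszul duality context axioms, I would apply the transfer theorem (Proposition \ref{suppositoire}) to the following candidate morphism of weak Koszul duality contexts, whose target is the baseline context of Example \ref{Koszul-modules}:
\[
\xymatrix@C=80pt@R=30pt{
\CDGA_{A/A} \ar@<-2pt>[d]_-{\mathfrak{L}_A} \ar@<2pt>[r]^-{\mathfrak{D}_A} & \LIE_A^{op} \ar@<2pt>[l]^-{\mathrm{CE}^\bullet_A} \ar@<-2pt>[d]_-{\textit{forget}} \\
\MOD_A \ar@<-2pt>[u]_-{V \mapsto A \oplus V[-1]} \ar@<2pt>[r]^-{(-)^\vee} & \MOD_A^{op} \ar@<2pt>[l]^-{(-)^\vee} \ar@<-2pt>[u]_-{\textit{free}}
}
\]
with $\mathfrak{L}_A(R) := \mathbb{L}_{R/A} \otimes_R A[1] \simeq \mathbb{L}_{A/R}$. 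Conditions (A) and (B) of Definition \ref{giletjaunes} are immediate, exactly as in the absolute case: the $A$-augmented $A$-algebra $\mathrm{CE}^\bullet_A(\textit{free}_A(M))$ agrees with the split square-zero extension $A \oplus M^\vee[-1]$ (by the relative analogue of Remark \ref{muraillette}), and $\textit{forget}(A[-n-1]) \simeq A[-n-1]$.

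For the morphism conditions (Definition \ref{def-Koszulmorphism}), the forgetful functor $\textit{forget} \colon \LIE_A \to \MOD_A$ is conservative by construction and preserves small sifted limits (by the $A$-linear analogue of \cite[Proposition 2.1.16]{DAG-X}). To see it sends good objects to reflexive ones, I would establish the relative version of Lemma \ref{verygoodlemma}: every good $A$-dgla is equivalent to a \emph{very good} one whose underlying graded $A$-module has the form $A \otimes_\kk V$ with $V$ a finite-dimensional positively graded $\kk$-vector space. By Remark \ref{cheat}, such an $A$-module is good, and hence reflexive. For the Beck--Chevalley condition, the mate $\theta_\mathfrak{g} \colon \mathfrak{L}_A(\mathrm{CE}^\bullet_A(\mathfrak{g})) \to \mathfrak{g}^\vee$ on a good dgla is analyzed as in the proof of Proposition \ref{opinel}, by factoring it through the uncompleted Chevalley--Eilenberg algebra $\widetilde{\mathrm{CE}}^\bullet_A(\mathfrak{g}) := S_A(\mathfrak{g}^\vee[-1])$. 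The composite $\widetilde{\theta}_\mathfrak{g}$ is an equivalence via the cofibrancy of $\widetilde{\mathrm{CE}}^\bullet_A(\mathfrak{g})$ (when $\mathfrak{g}$ is very good), together with the relative version of Lemma \ref{augmentedlemma}; and the map $\mathfrak{L}_A(\iota_\mathfrak{g})$ is an equivalence because $\mathrm{CE}^\bullet_A(\mathfrak{g})$ is flat over $\widetilde{\mathrm{CE}}^\bullet_A(\mathfrak{g})$.

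The hardest step is precisely the flatness of the completed Chevalley--Eilenberg cdga over the uncompleted one in the relative setting, and this is where the cohomological boundedness of $A$, together with the positive-degree concentration of very good dglas, is essential: one needs sufficient control on the adic filtration to ensure that completion preserves flatness in this bounded-below situation. Granted this, Proposition \ref{suppositoire} upgrades the top row to a Koszul duality context, and Theorem \ref{bomb} identifies $\LIE_A$ with $\bold{FMP}_{A/A}$ through the associated functor $\Psi$.
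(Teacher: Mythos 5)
Your overall route is the same as the paper's (build the relative Koszul duality context via the square with $\mathfrak{L}_A$, $\textit{forget}$, $V\mapsto A\oplus V[-1]$, $\textit{free}$, transfer along Proposition \ref{suppositoire} from Example \ref{Koszul-modules}, then apply Theorem \ref{bomb}), but your verification of condition \textbf{(C)} has a genuine gap, and it sits exactly where the cohomological boundedness of $A$ has to be used. A very good $A$-dgla does \emph{not} have underlying graded $A$-module of the form $A\otimes_\kk V$ with $V$ finite dimensional: its underlying graded Lie algebra is freely generated over $A$ by finitely many generators in positive degrees, and the free graded Lie algebra on such generators is in general infinite dimensional (only degreewise finite dimensional and positively graded) --- already two generators in degree $1$ produce nonzero brackets in every degree. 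So you cannot invoke Remark \ref{cheat} to conclude that the underlying $A$-module is good (perfect); it typically is not, just as $\bigoplus_{n\geq0}A[-n]$ in Example \ref{Koszul-modules} is reflexive but not good. What \textbf{(C)} actually demands is only reflexivity, and this is precisely the place where boundedness enters: for a module of the form $A\otimes_\kk W$ with $W$ positively graded and degreewise finite dimensional, the $A$-linear dual is the product $\prod_j (W^j)^*\otimes_\kk A$ (with shifts); if $A$ is cohomologically bounded, only finitely many factors contribute in each cohomological degree, the product agrees with the direct sum, and the double dual returns $A\otimes_\kk W$, whereas for unbounded $A$ this fails and very good dglas need not be reflexive as $A$-modules.

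Consequently, your diagnosis that the boundedness of $A$ is needed for the flatness of $\mathrm{CE}^\bullet_A(\mathfrak g)$ over $\widetilde{\mathrm{CE}}^\bullet_A(\mathfrak g)$ is misplaced: that flatness argument goes through exactly as in the absolute case of Proposition \ref{opinel} and does not use the hypothesis; the hypothesis is consumed entirely by the statement that $\textit{forget}$ sends good objects to reflexive ones. A smaller but real point: over a general base $A$ the naive assignment $\mathfrak g\mapsto \hat{S}_A(\mathfrak g^\vee[-1])$ does \emph{not} preserve weak equivalences (contrary to what you assert), because $M\mapsto S_A(M)$ is not homotopy invariant; the relative Chevalley--Eilenberg functor has to be derived, i.e.\ defined on $A$-dglas whose underlying $A$-module is projective (every object of $\lie_A$ admits such a model), and only then does one pass to $\infty$-categories and produce the left adjoint $\mathfrak{D}_A$ by presentability.
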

\begin{Rem}
In \cite{He}, there is the additional assumption that $H^0(A)$ is noetherian, but it does not appear to be necessary. 
\end{Rem}
\begin{proof}[Hints of proof]
The strategy is to produce a Koszul duality context
\[
(\CDGA_{A/A}, A \oplus A[n]) \adjointHA (\LIE_A^{op}, \textit{free}\, \, A[-n-1])
\]
and then to apply Theorem \ref{bomb}. The Chevalley--Eilenberg construction can be performed over $A$, but we must take its derived version (because the functor $M \rightarrow \mathrm{S}_A(M)$ does not respect weak equivalences). Concretely, assuming that $A$ is cofibrant, the Chevalley Eilenberg is defined in the usual way on $A$-dg-Lie algebras that are projective as $A$-modules. Since every element in $\lie_A$ is isomorphic to a dg-Lie algebra whose underlying $A$-module is projective, we get a
functor $\mathrm{CE}^{\bullet}_A \colon \LIE_A^{op} \rightarrow \CDGA_{A/A}$, and it admits a left adjoint $\mathfrak{D}_A$. We consider again a diagram of the form
\[
\xymatrix@C=100pt@R=50pt{
\CDGA_{A/A} \ar@<-2pt>[d]_-{\mathcal{L}_{A}} \ar@<2pt>[r]^-{\mathfrak{D}_A}&   \ar@<2pt>[l]^-{\mathrm{CE}^{\bullet}_A} \LIE_{A}^{op} \ar@<-2pt>[d]_-{\textit{forget}} \\
\MOD_{A}\ar@<-2pt>[u]_-{V \rightarrow A \oplus V[-1]} \ar@<2pt>[r]^-{\mathrm{Hom}_A(-, A)}  & \MOD_{A}^{op}  \ar@<2pt>[l]^-{\mathrm{Hom}_A(-, A)} \ar@<-2pt>[u]_-{\textit{free}}
}
\]
where $\mathfrak{L}_A(R)=\mathbb{L}_{A/R}$ and try to construct a morphism between weak Koszul duality contexts. This works exactly as in the proof of Proposition \ref{opinel}, but some aspects have to be taken care of in condition \textbf{(C)} of morphisms between weak Koszul duality contexts: the fact that theforgetful functor commutes with with sifted limits is explained in the proof of \cite[prop. 1.2.2]{He}. On the other hand, the cohomological boundedness of $A$ is crucial, otherwise the forgetful functor (although conservative) wouldn't send good objects to reflexive objects. 
More precisely, even if Lemma \ref{verygoodlemma} still holds in this context\footnote{A very good dgla over $A$ is a good dgla over $A$ such that 
\begin{itemize}
\item[--] the underlying $A$-module is projective. 
\item[--] the underlying graded Lie algebra is freely generated over $A$ by finitely many generators in positive degree. 
\end{itemize}}, very good dglas over $A$ are not necessarily reflexive as $A$-modules (for instance if $A$ is not a bounded $\kk$-algebra, then $\textit{free}\,\, A[-2]$ is not quasi-isomorphic to its double dual as a complex of $A$-modules). But they are whenever $A$ is bounded. 
\end{proof}

All results of the previous Section remain true if one replaces $\kk$ with a bounded $A$. For instance, we have the following analog of Proposition \ref{prop-tangent-repr}: 
\begin{Prop}
Let $\underline{B}$ be a representable element in $\bold{FMP}_{A/A}$. Then its tangent complex $\mathbb{T}_{\underline{B}}$, considered in $\MOD_A$, is equivalent to $\mathbb{T}_{B/A}\otimes_BA$, with $\mathbb{T}_{B/A}:=(\mathbb{L}_{B/A})^\vee$. $\Box$
\end{Prop}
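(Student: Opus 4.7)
The plan is to run the formal calculation from the proof of Proposition \ref{prop-tangent-repr} verbatim, with the base $\kk$ replaced by $A$. By definition $\underline{B}(C)=\mathrm{MAP}_{\CDGA_{A/A}}(B,C)$, and the deformation context under consideration is $\big(\CDGA_{A/A},(A\oplus A[n])_n\big)$, so
\[
(\mathbb{T}_{\underline{B}})_n \;=\; \mathrm{MAP}_{\CDGA_{A/A}}\!\big(B,\,A\oplus A[n]\big).
\]

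The second step is to apply the cotangent complex / square-zero extension adjunction from Example \ref{exa-AA}, restricted to the slice $\CDGA_{A/A}$. A morphism $B\to A\oplus A[n]$ in $\CDGA_{A/A}$ (i.e.\ compatible with the augmentations) is the datum of a $B$-linear map $\mathbb{L}_{B/A}\to A[n]$, where $A$ is regarded as a $B$-module via the augmentation $B\to A$. Extending scalars along this augmentation then gives
\[
\mathrm{MAP}_{\CDGA_{A/A}}(B, A\oplus A[n]) \;\simeq\; \mathrm{MAP}_{\MOD_B}\!\big(\mathbb{L}_{B/A},\,A[n]\big) \;\simeq\; \mathrm{MAP}_{\MOD_A}\!\big(\mathbb{L}_{B/A}\otimes_B A,\,A[n]\big).
\]

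The third step is to use the $\MOD_A$-enriched lift of the tangent complex (the analogue, over $A$, of the remark following Proposition \ref{olan}): the right-hand side then reads $(\mathbb{L}_{B/A}\otimes_B A)^\vee[n]$ in $\MOD_A$. Under the canonical equivalence $\mathrm{Stab}(\MOD_A)\simeq\MOD_A$ sending $(M[n])_n$ to $M$, this identifies
\[
\mathbb{T}_{\underline{B}} \;\simeq\; (\mathbb{L}_{B/A}\otimes_B A)^\vee,
\]
and the natural morphism $\mathrm{Hom}_B(\mathbb{L}_{B/A},B)\otimes_B A\to \mathrm{Hom}_A(\mathbb{L}_{B/A}\otimes_B A,A)$ is an equivalence once $\mathbb{L}_{B/A}$ is perfect over $B$; this is the implicit hypothesis allowing $\mathbb{T}_{B/A}\otimes_B A$ to be a sensible expression (paralleling the $\kk$-linear case of Proposition \ref{prop-tangent-repr}), and yields the desired identification with $\mathbb{T}_{B/A}\otimes_B A$.

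The main thing to be careful about is bookkeeping rather than any real obstacle: one must track the augmentation $B\to A$ through the adjunction (working in $\CDGA_{A/A}$ rather than $\CDGA_A$), and one must check that the $\MOD_A$-enrichment of $\bold{FMP}_{A/A}$ is compatible with the target spectrum object—both of which are direct transcriptions, over $A$, of arguments already carried out over $\kk$.
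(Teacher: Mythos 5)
Your proposal is correct and follows essentially the same route as the paper, which simply transcribes the proof of Proposition \ref{prop-tangent-repr} with $\kk$ replaced by $A$: the only cosmetic difference is that you apply the square-zero/cotangent adjunction through $\mathbb{L}_{B/A}$ directly, whereas the paper's template goes through $\mathfrak{L}_A(B)=\mathbb{L}_{A/B}\simeq\mathbb{L}_{B/A}\otimes_BA[1]$, which amounts to the same computation. Your explicit remark that dualizability (perfectness) of $\mathbb{L}_{B/A}$ is what identifies $(\mathbb{L}_{B/A}\otimes_BA)^\vee$ with $\mathbb{T}_{B/A}\otimes_BA$ is accurate, and the same hypothesis is implicitly used in the paper's $\kk$-linear argument.
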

In particular if $B=A \otimes_{\kk} A$, with $A$-augmentation being given by the product, then 
$$
\mathbb{L}_{A \otimes_{\kk} A/A}\otimes_{A \otimes_{\kk} A}A\simeq \mathbb{L}_{A}
$$
and thus $\mathbb{T}_{\underline{{A \otimes_{\kk} A}}}\simeq\mathbb{T}_{A}$. 
\begin{Cor} \label{swan}
The $A$-module $\mathbb{T}_A[-1]$ is a Lie algebra object in $\MOD_A$. 
\end{Cor}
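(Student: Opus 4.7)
The plan is to deduce this corollary directly from Hennion's equivalence combined with the preceding proposition and the $A$-linear analog of the boxed ``beautiful result'' identifying the underlying complex of the dgla associated to a formal moduli problem with its shifted tangent complex.

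First, I would specialize the previous proposition to the augmented $A$-algebra $B = A\otimes_{\kk}A$, with augmentation given by the multiplication map $A\otimes_{\kk}A\to A$. Then $\mathbb{L}_{B/A}\otimes_B A\simeq \mathbb{L}_A$ via the standard identification of the relative cotangent complex of the multiplication with the (absolute) cotangent complex of $A$, and consequently $\mathbb{T}_{\underline{A\otimes_\kk A}}\simeq \mathbb{T}_A$ as objects of $\MOD_A$, as noted in the paragraph just above.

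Next, I would invoke Hennion's theorem, $\bold{FMP}_{A/A}\simeq \LIE_A$, which equips every formal moduli problem $X\in\bold{FMP}_{A/A}$ with a canonically associated $A$-dg-Lie algebra $\mathfrak{g}_X:=\Phi_A(X)$. As in the $\kk$-linear setting, the morphism of weak Koszul duality contexts constructed in the proof of Hennion's theorem induces, via the $A$-linear version of Proposition \ref{functorialprop}, a commuting diagram relating $\mathbb{T}_X$ and the forgetful image of $\mathfrak{g}_X$. Unwrapping this exactly as in the boxed statement of the previous section yields that the underlying $A$-module of $\mathfrak{g}_X$ is $\mathbb{T}_X[-1]$.

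Applying this to $X=\underline{A\otimes_\kk A}$, the dgla $\mathfrak{g}_{\underline{A\otimes_\kk A}}\in\LIE_A$ has underlying $A$-module $\mathbb{T}_{\underline{A\otimes_\kk A}}[-1]\simeq \mathbb{T}_A[-1]$, which is precisely the assertion that $\mathbb{T}_A[-1]$ is a Lie algebra object of $\MOD_A$. The only nontrivial step is the $A$-linear transcription of the boxed result; I expect no additional obstacle beyond the routine replacement of $\kk$ by $A$, since cohomological boundedness of $A$ is already in force and was used precisely to ensure Hennion's theorem and the functoriality of tangent complexes go through.
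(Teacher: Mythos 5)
Your argument is exactly the paper's: specialize the $A$-linear analog of Proposition \ref{prop-tangent-repr} to $B=A\otimes_\kk A$ with augmentation the product, use $\mathbb{L}_{A\otimes_\kk A/A}\otimes_{A\otimes_\kk A}A\simeq\mathbb{L}_A$ to get $\mathbb{T}_{\underline{A\otimes_\kk A}}\simeq\mathbb{T}_A$, and then apply Hennion's equivalence together with the $A$-linear version of the boxed identification of the underlying complex of $\mathfrak{g}_X$ with $\mathbb{T}_X[-1]$. This matches the paper's reasoning, so no changes are needed.
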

In \cite{He}, Hennion proves global versions of the above results, and shows in particular that if $X$ is an algebraic derived stack locally of finite presentation, then $\mathbb{T}_X[-1]$ is a Lie algebra object in $\mathbf{QCoh}(X)$. This again explains (and generalizes) a phenomenon that we mentioned in the introduction. 

\subsection{DG-Lie algebroids}

In this section, we introduce the notion of dg-Lie algebroids, which is the dg-enriched version of Lie algebroids. This will be used to construct a Koszul duality context for $A$-augmented $k$-algebras in the next section. Informally, a (dg)-Lie algebroid is a Lie algebra $L$ over $\kk$ such that $L$ is an $A$-module, $A$ is a $L$-module, both structures being compatible. More precisely: 
\begin{Def}
A \textit{$(\kk, A)$-dg-Lie algebroid} is the data of a dgla $L$ over $\kk$ endowed with an $A$-module structure, as well as an action of $L$ on $A$, satisfying the following conditions: 
\begin{itemize}
\item[--] $L$ acts on $A$ by derivations, meaning that the action is given by a $A$-linear morphism of $\kk$-dglas $\rho \colon L \rightarrow \mathrm{Der}_{\kk}(A)$, called the \textit{anchor map}.
\item[--] The following Leibniz type rule holds for any $a\in A$ and any $\ell_1,\ell_2\in L$: 
\[
[\ell_1\,,\,a\ell_2]=(-1)^{|a| \times |\ell_1|} \, a[\ell_1\,,\,\ell_2] + \rho(\ell_1)(a)\ell_2
\]
\end{itemize}
Morphisms of $(\kk, A)$-dg-Lie algebroids are $A$-linear morphisms of dglas over $\kk$ commuting with the anchor map.
\end{Def}
\begin{Rem} $ $ \par
\begin{itemize}
\item[--] Every $A$-linear dgla defines a dg-Lie algebroid: it suffices to keep the same underlying object and to set the anchor map to zero. On the other hand, if $L$ is a $(\kk, A)$-dg-Lie algebroid, the kernel of the anchor map is a true $A$-linear dgla. These two constructions are adjoint. 
\item[--] Given a pair $(\kk, A)$, it is possible (\textit{see} \cite{kapranov_free_2007}) to attach to any object $V$ of $\mod_{A / \mathrm{Der}_{\kk}(A)}$ a free $(\kk, A)$-dg-Lie algebroid, denoted by $\textit{free}\, (V)$. The functor $V \rightarrow \textit{free}\,\,(V)$ is the left adjoint to the forgetful functor from $(\kk, A)$-dg-Lie algebroids to $A$-modules lying over $\mathrm{Der}_{\kk}(A)$. 
\item[--] The Chevalley--Eilenberg construction can be performed for $(\kk, A)$-dg-Lie algebroids. If $L$ is a $(\kk, A)$-dg-Lie algebroid, then 
the $A$-module underlying $\mathrm{CE}_{\kk/A}^{\bullet}(L)$ is the $A$-dual of $S_A(L[1])$ and the differential reads as follows (omitting signs):  
\begin{align*}
d_{\mathrm{CE}}\omega(\ell_0,\dots,\ell_n)= & \pm\sum_{i=0}^n\omega\big(\ell_0,\dots,d_L(\ell_i),\dots,\ell_n\big)\\
& \pm\sum_{i=0}^n\rho(\ell_i)\big(\omega(\ell_0,\dots\hat{\ell_i},\dots,\ell_n)\big)\pm\sum_{0\leq i<j\leq n}\omega([\ell_i,\ell_j],\ell_0,\dots\hat{\ell_i},\dots,\hat{\ell_j},\dots,\ell_n)\,.
\end{align*}
However, the same problem appearing for the Chevalley-Eilenberg complex of $A$-dg-Lie algebras happens: this functor must be derived. Hence, the above definition is set only on $(\kk, A)$-dg-Lie algebroids that are projective as $A$-modules (again $A$ is assumed to be cofibrant), and since every $(\kk, A)$-dg-Lie algebroid is quasi-isomorphic to one whose underlying $A$-module is projective, this defines a functor $\mathrm{CE}_{\kk/A}^{\bullet}$ from $(\kk, A)$ dg-Lie algebroids to $A$-augmented $\kk$-cdgas.
\item[--] If two cdgas $A$ and $A'$ are equivalent, then the $\infty$-categories $\LIE_A$ and $\LIE_{A'}$ are equivalent. It is impossible to expect this kind of result for $(\kk, A)$-dg-Lie algebroids, due to the presence of the derivations of $A$ (which is not an $\infty$-functor). This motivates the next forthcoming definition. 
\end{itemize}
\end{Rem}
\begin{Def}
A derived $(\kk, A)$-dg-Lie algebroid is a $(\kk, QA)$-dg-Lie algebroid, where $QA$ is a cofibrant replacement of $A$. We denote by $\lie_{\kk/A}$ the category of $(\kk, QA)$-dg-Lie algebroids.
\end{Def}
It can be proved that the category $\lie_{\kk/A}$ has a naturel model structure obtained by transferring the model structure of $\mod_{QA/\mathrm{Der}_\kk(QA)}$ 
via the adjunction 
\[
{\textit{free}} \colon \mod_{QA/\mathrm{Der}_\kk(QA)} \adjointHA \lie_{\kk/A} \colon \textit{forget}
\]
(\textit{see} \cite{Ve}). 
Hence we get an $\infty$-category $\LIE_{\kk/A}$ together with an adjunction 
\[
\MOD_{A/\mathbb{T}_{A}}\,\begin{matrix}\longrightarrow \\[-0.3cm] \longleftarrow\end{matrix}\,\LIE_{\kk/A}\,.
\]
where $\mathbb{T}_{A} \simeq \mathrm{Der}_{\kk}(QA)$ is the tangent complex of $A$. Fibrant $(\kk, A)$-dg-Lie algebroids are those with surjective anchor map.
\begin{Rem}[Corrigendum] \label{corrigendum}
Actually, the transferred structure is only a semi-model structure\footnote{The main problem is that the initial object $0$ is not fibrant.}, as shown in \cite{Nuit1}. One can consider the under-category $^{\mathfrak h/}\!\lie_{\kk/A}$, where $\mathfrak h$ is a fibrant-cofibrant replacement of the initial Lie algebroid $0$. It happens to be a genuine combinatorial model category that is obviously Quillen equivalent to $\lie_{\kk/A}$ (\textit{see} \cite[Remark 2.5]{Nuit2}). Hence, from our point of view everything is still fine, as the $\infty$-category $\LIE_{\kk/A}$ is indeed presentable. 
\end{Rem}
In the sequel, we will always assume that $A$ is cofibrant (which is possible after taking a cofibrant replacement). In this way, derived $(\kk,A)$-dg-Lie algebroids will be usual $(\kk,A)$-dg-Lie algebroids. 

Observe that $\LIE_{\kk/A}$ can be made into a dual deformation context: namely, we transfer the dual deformation context $\big(\MOD_{A/\mathbb{T}_A},(A[-n-1]\overset{0}\to\mathbb{T}_A)_n\big)$ from Example \ref{example-dualdefo-over} along the adjunction
\[
\textit{forget} \,\, \colon \LIE_{\kk/A}^{op} \colon\adjointHA  \MOD_{A/\mathbb{T}_A}^{op}  \colon  {\textit{free}}\,.
\]

\subsection{Formal moduli problems under $Spec(A)$}

In this section, we are looking at a more general situation as in \S \ref{easy}: we look at $A$-augmented $\kk$-algebras, where $A$ is a bounded cdga in non-positive degrees, which we assume to be cofibrant. 
Recall from Remark \ref{remalgA} that we have a deformation context $\big(\CDGA_{\kk/A},(A \oplus A[n])_n\big)$. The aim of this section is to give a description of formal moduli problems for $A$-augmented cdgas over $\kk$. 
The motto behind what follows will be the following one:
\par \medskip
\begin{center}
\textit{Going from $A$-augmented $A$-algebras to $A$-augmented $\kk$-algebras corresponds via Koszul duality to go from Lie algebras over $A$ to $(\kk, A)$ Lie algebroids}. 
\end{center}
\par \medskip
\noindent{}The precise result we want to prove is: 
\begin{Thm}[Nuiten \cite{Nuit2}]\label{thm-tooooop}
Given a cdga $A$ as above, there is a Koszul duality context 
\[
\big(\CDGA_{\kk /A}, (A \oplus A[n])_n\big) \adjointHA \big(\LIE_{\kk/A}^{op}, (\textit{free} \,(A[-n-1]\overset{0}{\to}\mathbb{T}_A)_n)
\]
This in particular implies that there is an equivalence 
\[
\bold{FMP}(\CDGA_{\kk/A}, A \oplus A[n]) \simeq \LIE_{\kk/A}\,.
\]
\end{Thm}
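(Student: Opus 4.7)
The plan is to produce a morphism of weak Koszul duality contexts from the pair in the statement to the Koszul duality context of Example \ref{Koszul-mod-over-under} with $L=\mathbb{L}_{A/\kk}$ (whose dual is $\mathbb{T}_A$), and then invoke the transfer theorem (Proposition \ref{suppositoire}) followed by Theorem \ref{bomb}. First I would construct the right adjoint $\mathrm{CE}^{\bullet}_{\kk/A}:\LIE_{\kk/A}^{op}\to\CDGA_{\kk/A}$ using the relative Chevalley--Eilenberg construction recalled in \S3.2 (well-defined on the sub-category of algebroids which are projective as $A$-modules, a cofibrant replacement being always available), and then obtain its left adjoint $\mathfrak D_{\kk/A}$ via the adjoint functor theorem applied to the presentable $\infty$-category $\CDGA_{\kk/A}$. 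At the level of the generators, elementary computations should give $\mathfrak D_{\kk/A}(A\oplus A[n])\simeq \textit{free}(A[-n-1]\overset{0}{\to}\mathbb{T}_A)$, which yields condition \textbf{(B)} of a weak morphism.

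The candidate commuting square of right adjoints is
\[
\xymatrix@C=50pt@R=25pt{
\CDGA_{\kk/A} \ar[d]_-{\mathbb{L}_{A/-}} & \LIE_{\kk/A}^{op} \ar[l]_-{\mathrm{CE}^{\bullet}_{\kk/A}} \ar[d]^-{\textit{forget}^{op}} \\
^{\mathbb{L}_{A/\kk}/}\!\MOD_A & (\MOD_{A/\mathbb{T}_A})^{op} \ar[l]^-{(-)^\vee}
}
\]
where the left vertical is the adjunction of Example \ref{exa-kA} and the right vertical is $\textit{free}\dashv\textit{forget}$ from \S3.2. Commutativity is the equivalence, natural in a Lie algebroid $L$ with anchor $\rho$,
\[
\mathbb{L}_{A/\mathrm{CE}^{\bullet}_{\kk/A}(L)}\;\simeq\; L^\vee
\]
as objects of $\MOD_A$ sitting under $\mathbb{L}_{A/\kk}$, the structure morphism on the right-hand side being the dual $\rho^\vee:\mathbb{L}_{A/\kk}\to\mathbb{T}_A^\vee\to L^\vee$. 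This may be checked on free algebroids by a direct computation (where $\mathrm{CE}^{\bullet}_{\kk/A}$ reduces to a completed symmetric algebra on the generators), and then propagated, using flatness of the completed CE cdga over its uncompleted version as in the proof of Proposition \ref{opinel}, to all good Lie algebroids by an induction along the iterated pushouts defining a good object.

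To upgrade this weak morphism to an actual morphism of Koszul duality contexts in the sense of Definition \ref{def-Koszulmorphism}, one needs to verify (i) that $\textit{forget}^{op}:\LIE_{\kk/A}^{op}\to(\MOD_{A/\mathbb{T}_A})^{op}$ is conservative and preserves small sifted limits, (ii) that it sends good algebroids to reflexive objects, and (iii) the Beck--Chevalley condition locally at good algebroids. Item (i) is established exactly as in the dg-Lie algebra case (the forgetful functor from algebroids to their underlying $A$-modules over $\mathbb{T}_A$ is monadic, see \cite[Prop.~1.2.2]{He}). Item (ii) is the analogue of Lemma \ref{verygoodlemma}: every good algebroid admits a very good model whose underlying graded Lie algebroid is freely generated over $A$ by finitely many generators in positive degree; this is where the assumption that $A$ is cohomologically bounded enters, exactly as in Hennion's proof, to guarantee reflexivity of the underlying $A$-module relative to $\mathbb{T}_A$ in the dual deformation context of Example \ref{example-dualdefo-over}. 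Item (iii) is a reformulation of the equivalence $\mathbb{L}_{A/\mathrm{CE}^{\bullet}_{\kk/A}(L)}\simeq L^\vee$ already established above.

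The main obstacle will be item (iii) together with the verification of the square commutativity: one has to control the behaviour of the relative cotangent complex functor on Chevalley--Eilenberg cdgas of Lie algebroids, which are not cofibrant in general and whose augmentation ideal carries an additional anchor-twist compared to the absolute case of Proposition \ref{opinel}. As in that proof, I would work with the uncompleted Chevalley--Eilenberg cdga $\widetilde{\mathrm{CE}}^{\bullet}_{\kk/A}(L)$, show that it is cofibrant in $\cdga_{\kk/A}$ for $L$ very good, deduce via a lemma analogous to Lemma \ref{augmentedlemma} that its relative cotangent complex is computed by the naive quotient of its augmentation ideal by its square --- which is $L^\vee$ together with the anchor-induced arrow $\mathbb{L}_{A/\kk}\to L^\vee$ --- and finally bridge the gap to $\mathrm{CE}^{\bullet}_{\kk/A}(L)$ via the flatness of the completion. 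Once all of this is in place, Proposition \ref{suppositoire} yields that the top row is a Koszul duality context, and the equivalence $\bold{FMP}(\CDGA_{\kk/A},A\oplus A[n])\simeq\LIE_{\kk/A}$ follows from Theorem \ref{bomb}.
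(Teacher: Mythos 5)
Your overall strategy is the paper's: the same square relating $\CDGA_{\kk/A}$, $\LIE_{\kk/A}^{op}$ and the Koszul duality context of Example \ref{Koszul-mod-over-under} (with $L=\mathbb{L}_{A/\kk}$), transfer via Proposition \ref{suppositoire}, then Theorem \ref{bomb}; and your last paragraph (uncompleted Chevalley--Eilenberg algebra, cofibrancy for very good models, the analogue of Lemma \ref{augmentedlemma}, flatness of the completion) is exactly how the paper handles the Beck--Chevalley condition. The gap is in how you set up the weak morphism itself. Condition \textbf{(A)} of Definition \ref{giletjaunes} asks for commutativity of the square of \emph{right} adjoints, i.e.\ a natural equivalence $\mathrm{CE}^{\bullet}_{\kk/A}\big(\textit{free}\,(M\overset{f}{\to}\mathbb{T}_A)\big)\simeq A\underset{d_f}{\oplus}M^{\vee}[-1]$, the \emph{non-split} square-zero extension classified by the derivation $d_f$, valid for all anchored modules $f$; the same computation, applied to $f=0$, is also what shows $\mathrm{CE}^{\bullet}_{\kk/A}$ sends $\textit{free}\,(A[-n-1]\overset{0}{\to}\mathbb{T}_A)$ to $A\oplus A[n]$, i.e.\ that the top row is a weak Koszul duality context in the first place. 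You never state this identity. Instead you assert, as ``commutativity'', the equivalence $\mathbb{L}_{A/\mathrm{CE}^{\bullet}_{\kk/A}(L)}\simeq L^{\vee}$ ``natural in $L$'': the square you drew mixes a left adjoint ($\mathbb{L}_{A/-}$) with a right adjoint ($\mathrm{CE}^{\bullet}_{\kk/A}$), so it is not the square of right adjoints, it does not commute in general (completion kills it for non-good $L$, exactly as in Proposition \ref{opinel}), and its commutativity at good objects \emph{is} the Beck--Chevalley condition \textbf{(D)}. Hence your item (iii) being ``a reformulation of the equivalence already established above'' is circular rather than an independent check. Likewise, verifying \textbf{(B)} via $\mathfrak{D}_{\kk/A}(A\oplus A[n])\simeq\textit{free}\,(A[-n-1]\overset{0}{\to}\mathbb{T}_A)$ goes the wrong way: that identity is a consequence of the completed Koszul duality context (Proposition \ref{yolo} \textbf{(A)}), not an input; condition \textbf{(B)} concerns the right adjoint $\textit{free}$ applied to the bottom spectrum object and holds by construction of the transferred dual deformation context.

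The repair is precisely the missing computation $\mathrm{CE}^{\bullet}_{\kk/A}\big(\textit{free}\,(f)\big)\simeq A\underset{d_f}{\oplus}M^{\vee}[-1]$, which is the genuinely new ingredient of the algebroid setting (split square-zero extensions get replaced by twisted ones) and which yields conditions \textbf{(A)}, \textbf{(B)} and the weak duality context simultaneously; your sketch of \textbf{(D)} then goes through as in the paper, provided the very good models are taken with zero anchor, of the form $\mathfrak{g}\overset{0}{\to}\mathbb{T}_A$ with $\mathfrak{g}$ a very good dgla over $A$ (this is the content of the paper's lemma, and it is what makes the twisted augmented cdga cofibrant and the analogue of Lemma \ref{augmentedlemma} -- namely Lemma \ref{2augmentedlemma}, with a fibrancy hypothesis on the augmentation -- applicable), and the sifted-limits statement for the forgetful functor on algebroids is quoted from Nuiten \cite{Nuit1} rather than from \cite{He}.
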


\begin{proof}
The first step consists in building the weak Koszul duality context. A reasonable candidate for the right adjoint functor is the Chevalley--Eimenberg functor 
\[
\mathrm{CE}^{\bullet}_{\kk/A}\colon\LIE_{\kk/A}^{op}\longrightarrow \CDGA_{\kk/A}\,.
\]
Here are a few properties of the Chevalley--Eilenberg functor that carry on to the Lie algebroid setting: 
\begin{itemize}
\item[--] $\mathrm{CE}^\bullet_{\kk/A}(L)\simeq \mathrm{RHom}_{U(A,L)}(A,A)$, where $U(A,L)$ is the universal envelopping algebra of $L$ (\textit{see e.g.} \cite{Rine} for the definition of $U(A,L)$)\footnote{In \cite{Rine}, $(\kk,A)$-Lie algebroids are named $(\kk,A)$-Lie algebras, and everything that is done makes sense in the differential graded setting as well.}. 
\item[--] To every $M\overset{f}{\to}\mathbb{T}_A=\mathrm{Der}(A)$ one can associate a derivation $d_f:A\to M^\vee$ and thus a \textbf{non-split} square zero extension $A\underset{d_f}{\oplus}M^\vee[-1]$. It is a fact that $\mathrm{CE}^\bullet_{\kk/A}\big(\textit{free}\,(f)\big)\simeq A\underset{d_f}{\oplus}M^\vee[-1]$. 
\item[--] $\mathrm{CE}^\bullet_{\kk/A}$ commutes with small limits (viewed as a functor on $\LIE_{\kk/A}^{op}$) and thus, since $\LIE_{\kk/A}$ is presentable, admits a left adjoint, that we denote by $\mathfrak{D}_{\kk/A}$. 
\end{itemize}
Thus we have a weak Koszul duality context 
\[
\mathfrak{D}_{\kk/A}\colon\big(\CDGA_{\kk /A}, (A \oplus A[n])_n\big) \adjointHA \big(\LIE_{\kk/A}^{op}, (\textit{free} \,(A[-n-1]\overset{0}{\to}\mathbb{T}_A)_n)\colon\mathrm{CE}^\bullet_{\kk/A}
\]
The next step is to prove that it is a Koszul duality context, by following the strategy of Proposition \ref{opinel}. We first consider the diagram 
\[
\xymatrix@C=80pt@R=40pt{
\CDGA_{\kk /A} & \ar[l]_-{\mathrm{CE}^{\bullet}_{\kk/A}}  \LIE_{\kk/A}^{op} \\
^{\mathbb{L}_A/}\!\MOD_A \ar[u]_-{(\mathbb{L}_A\overset{d}{\to}M)\mapsto A\underset{d}{\oplus}M[-1]} & \ar[l]_-{(-)^\vee}  (\MOD_{A/\mathbb{T}_A})^{op} \ar[u]_-{\textit{free}}
}
\]
that one can fill everywhere with left adjoints, giving rise to the following weak morphism of weak Koszul duality contexts:
\[
\xymatrix@C=80pt@R=40pt{
\CDGA_{\kk /A}  \ar@<-2pt>[d]_-{\mathfrak{L}: (B\to A) \mapsto (\mathbb{L}_A\to \mathbb{L}_{A/B})} \ar@<2pt>[r]^-{\mathfrak{D}_{\kk/A}} & \ar@<2pt>[l]^-{\mathrm{CE}^{\bullet}_{\kk/A}}  \LIE_{\kk/A}^{op} \ar@<-2pt>[d]_-{\textit{forget}} \\
^{\mathbb{L}_A/}\!\MOD_A   \ar@<2pt>[r]^-{(-)^\vee}  \ar@<-2pt>[u]_-{d\mapsto A\underset{d}{\oplus}M[-1]} & \ar@<2pt>[l]^-{(-)^\vee}  (\MOD_{A/\mathbb{T}_A})^{op} \ar@<-2pt>[u]_-{\textit{free}}
}
\]
Knowing, from Example \ref{Koszul-mod-over-under}, that the bottom weak Kozul duality context is actually a Koszul duality context, according to Proposition \ref{suppositoire} it remains to prove that this weak morphism is a morphism. The functor $\textit{forget}$ is conservative, and it is proved in \cite{Nuit1} that it preserves sifted limits. Let us now show that, if $L$ is good, then it is equivalent to a \textit{very good} dg-Lie algebroid: \textit{i.e.} a good dg-Lie algebroid that is of the form $\mathfrak g\overset{0}{\to} \mathbb{T}_A$ for a very good dgla $\mathfrak g$ over $A$. 
\begin{Lem}
Any good dg-Lie algebroid $L$ is quasi-isomorphic to a very good one. 
\end{Lem}
\begin{proof}[Proof of the lemma]
We first observe that $0$ is very good. We then proceed by induction: assume that $L$ is very good, and consider a dg-Lie algebroid $L'$ obtained by a pushout 
\[
\xymatrix{
\textit{free}\,(A[-n-1]\overset{0}{\to}\mathbb{T}_A)   \ar[r] \ar[d] & L \ar[d] \\
0 \ar[r] & L'
}
\]
in $\LIE_{\kk/A}$. Since $\textit{free}\,(A[-n-1]\overset{0}{\to}\mathbb{T}_A)$ is cofibrant, then a morphism $\textit{free}\,(A[-n-1]\overset{0}{\to}\mathbb{T}_A)\to L$ in $\LIE_{\kk/A}$ is represented by a morphism 
$\textit{free}\,(A[-n-1]\overset{0}{\to}\mathbb{T}_A)\to \widetilde{L}$ in $\lie_{\kk/A}$, where $\widetilde{L}$ is a chosen fibrant replacement of $L$. Now, we observe that one can choose $\widetilde{L}=L\oplus\mathfrak{h}$, where $\mathfrak{h}$ is a fibrant replacement of $0$, so that the replacement morphism $L\to \widetilde{L}$ splits. All in all, any morphism $\textit{free}\,(A[-n-1]\overset{0}{\to}\mathbb{T}_A)\to L$ in $\LIE_{\kk/A}$ can be represented by an actual morphism $\textit{free}\,(A[-n-1]\overset{0}{\to}\mathbb{T}_A)\to L$ in $\lie_{\kk/A}$. Next, a cofibrant replacement of the left vertical arrow is given by the morphism
\[
\textit{free}\,(A[-n-1]\overset{0}{\to} \mathbb{T}_A)\rightarrow \textit{free}\,\left\{\textrm{cone}\,(A \xrightarrow{\mathrm{id}} A)[-n-1]\overset{0}{\to}\mathbb{T}_A \right\}.
\]
Hence our pushout in $\LIE_{\kk/A}$ is represented by the following honest non-derived pushout in $\lie_{\kk/A}$: 
\[
\xymatrix{
\textit{free}\,(A[-n-1]\overset{0}{\to} \mathbb{T}_A)   \ar[r] \ar[d] & L \ar[d] \\
\textit{free}\,\left\{\textrm{cone}\,(A \xrightarrow{\mathrm{id}} A)[-n-1]\overset{0}{\to}\mathbb{T}_A)  \right\} \ar[r] & L'
}
\]
Now recall that, by induction, $L=(\mathfrak{g}\overset{0}{\to}\mathbb{T}_A)$ for some very good dgla $\mathfrak g$ over $A$. 
We finally observe that the functor 
\[
(-)\overset{0}{\to}\mathbb{T}_A\colon\lie_{A}\longrightarrow\lie_{\kk/A}
\]
commutes with colimits, so that $L'=(\mathfrak{g}'\overset{0}{\to}\mathbb{T}_A)$ for a very good dgla over $A$. 
\end{proof}
This in particular shows that, again under the boundedness assumption on $A$, $\textit{forget}$ sends good objects to reflexive objects; hence \textbf{(C)} holds. It remains to prove \textbf{(D)}. 

We consider the natural morphism $\theta_L$ defined as the following composition: 
\[
\mathfrak{L}\big(\mathrm{CE}^{\bullet}_{\kk/A}(L)\big)\to \mathfrak{L}\big(\mathrm{CE}^{\bullet}_{\kk/A}(\textit{free}\,\textit{forget}\,L)\big)\to \mathfrak{L}\big(A\underset{d_{\rho}}{\oplus}(\textit{forget}\,L)^\vee[-1]\big)\to (\textit{forget}\,L)^\vee\,, 
\]
where $\rho$ is the anchor map\footnote{In the above, $(\textit{forget}\,L)^\vee$ shall be understood as $\mathbb{L}_A\overset{\rho^\vee}{\to}L^\vee$. } and $\mathfrak{L}$ is the left adjoint to the twisted square zero extension functor given by the formula $\mathfrak{L}(B\to A) = (\mathbb{L}_A\to \mathbb{L}_{A/B}).$ Again, the cdga morphism 
\[
\mathrm{CE}^{\bullet}_{\kk/A}(L)\to \mathrm{CE}^{\bullet}_{\kk/A}(\textit{free}\,\textit{forget}\,L)\to A\underset{d_{\rho}}{\oplus}L^\vee[-1]
\]
is nothing but the obvious projection (onto the quotient by the square of the augmentation ideal whenever $L$ is very good). 

As in the proof of Proposition \ref{opinel} we consider the \textit{uncompleted} Chevalley--Eilenberg sub-cdga $\widetilde{\mathrm{CE}}^{\bullet}_{\kk/A}(L)\hookrightarrow \mathrm{CE}^{\bullet}_{\kk/A}(L)$. 
The quotient by the square of its augmentation ideal is again $A\underset{d_{\rho}}{\oplus}L^\vee[-1]$, and we have the following commuting diagram and its image through the left-most vertical adjunction of our square: 
\[
\xymatrix{ 
\widetilde{\mathrm{CE}}_{\kk/A}^{\bullet}(L) \ar[d]_{\iota_{L}}\ar[rd] & \\
\mathrm{CE}^{\bullet}_{\kk/A}(L) \ar[r] & A\underset{d_{\rho}}{\oplus}(\textit{forget}\,L)^\vee[-1]
}
\qquad
\xymatrix{
\mathfrak{L}\big(\widetilde{\mathrm{CE}}^{\bullet}_{\kk/A}(L)\big) \ar[d]_{\mathfrak{L}(\iota_{L})}\ar[rd]^{\widetilde{\theta}_{L}} & \\
\mathfrak{L}\big(\mathrm{CE}^{\bullet}_{\kk/A}(L)\big) \ar[r]_-{\theta_{L}} & (\textit{forget}\,L)^\vee
}
\]
In order to prove that $\theta_L$ is an equivalence when $L$ is good, it suffices to prove that both $\widetilde{\theta}_L$ and $\mathfrak{L}(\iota_L)$ are. 
Let us start with the following variation on Lemma \ref{augmentedlemma}: 
\begin{Lem}\label{2augmentedlemma}
Let $B$ be an $A$-augmented $\kk$-cdga, with augmentation ideal $\mathcal J$, that is cofibrant as a $\kk$-cdga, and such that the augmentation morphism is a fibration. 
Then $\mathcal{J}/\mathcal J^2[1]$ is naturally an $A$-module under $\mathbb{L}_A$, and there is an equivalence 
$\mathfrak{L}(B)\simeq(\mathbb{L}_A\to\mathcal{J}/\mathcal J^2[1])$ in $ ^{\mathbb{L}_A/}\!\MOD_A$. 
\end{Lem}
\begin{proof}[Proof of the lemma]
First of all, observe that the exact sequence
\[
0 \to \mathcal{J}/\mathcal{J}^2 \to B/\mathcal{J}^2 \to A \to 0
\]
splits in $\mod_{\kk}$. 
Hence there is a $\kk$-derivation $\delta :A\to\mathcal J/\mathcal J^2[1]$ such that $B/\mathcal J^2$ is isomorphic to $A\underset{\delta}{\oplus}\mathcal J/\mathcal J^2$ in $\cdga_{\kk/A}$. 
Hence, 
$$
\mathrm{Hom}_{\cdga_{\kk/A}}(B, B/\mathcal J^2) 
= \mathrm{Hom}_{\cdga_{\kk/A}}(B,  A \underset{\delta}{\oplus} \mathcal J/\mathcal J^2)
\simeq \mathrm{Hom}_{\, ^{\mathbb{L}_A/}\!\MOD_A}(\mathfrak{L}(B), \mathcal J/\mathcal J^2[1])\,.
$$
Therefore, the quotient morphism $B\to B/\mathcal J^2$ gives us a morphism $\mathfrak{L}(B)\to (\mathbb{L}_A\to \mathcal J/\mathcal J^2[1])$. 
To show that this is an equivalence it is sufficient to prove that the morphism $\mathbb{L}_{A/B}\to \mathcal J/\mathcal J^2[1]$ is an equivalence. Now, since $A$ and $B$ are cofibrant, and $B\to A$ is fibrant, 
$$
\mathbb{L}_{A/B}\simeq cone(\Omega^1_B\otimes_BA\to \Omega^1_A)\simeq \ker(\Omega^1_B\otimes_BA\to \Omega^1_A)[1]\,.
$$
The morphism $\mathbb{L}_{A/B}\to \mathcal J/\mathcal J^2[1]$ is thus modeled by the shift of the natural map 
$$
\ker(\Omega^1_B\otimes_BA\to \Omega^1_A)\to \mathcal J/\mathcal J^2\,,
$$
which is an isomorphism. 
%
\end{proof}
We then observe that 
\begin{itemize}
\item[--] the augmentation map $\widetilde{\mathrm{CE}}^{\bullet}_{\kk/A}(L)\to A$ is obviously a fibration. 
\item[--] when $L$ is very good (which we can always assume without loss of generality when dealing with good dg-Lie algebroids), then $\widetilde{\mathrm{CE}}^{\bullet}_{\kk/A}(L)$ is cofibrant. 
\end{itemize}
Thus $\widetilde{\theta}_{L}$ is an equivalence if $L$ is good. 
We again conclude with the very same flatness argument as in the proof of Proposition \ref{opinel} in order to get that $\mathfrak{L}(\iota_L)$ is an equivalence. 
\end{proof}

\subsection{The relative tangent complex}

We want to compute the underlying anchored module of the dg-Lie algebroid associated with a representable fmp under $Spec(A)$. 
We have an equivalence
$$
\Phi_{\kk/A}\colon\mathbf{FMP}(\CDGA_{\kk/A})\tilde{\longrightarrow}\LIE_{\kk/A}
$$
\begin{Prop}\label{prop-TAB}
Let $\underline{B}$ be the representable fmp associated with an $A$-augmented $\kk$-algebra $B$. Then the image of $\Phi_{\kk/A}(\underline{B})$ along the forgetful functor $\LIE_{\kk/A}\to \MOD_{A/\mathbb{T}_A}$ is $\mathbb{T}_{A/B}\to\mathbb{T}_A$. 
\end{Prop}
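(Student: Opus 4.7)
My approach is to reduce the statement to the commuting square of left adjoints built in the proof of Theorem~\ref{thm-tooooop}. First, since $\Phi_{\kk/A}$ is inverse to the equivalence $\Psi$ of Theorem~\ref{bomb}, the representable computation carried out in the proof sketch of that theorem (namely $\Phi\circ\mathscr{Y}\simeq\mathfrak{D}^{op}$ on small objects) identifies $\Phi_{\kk/A}(\underline{B})$ with $\mathfrak{D}_{\kk/A}(B)$ viewed as an object of $\LIE_{\kk/A}$. It thus suffices to compute the image of $\mathfrak{D}_{\kk/A}(B)$ under the forgetful functor $\textit{forget}\colon\LIE_{\kk/A}\to\MOD_{A/\mathbb{T}_A}$.

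For this I will use the morphism of weak Koszul duality contexts displayed in the proof of Theorem~\ref{thm-tooooop}. Condition \textbf{(A)} in the definition of a weak morphism says that the square of right adjoints commutes, which, by passing to mates, produces a natural equivalence of left adjoints
\[
\textit{forget}\circ\mathfrak{D}_{\kk/A}\simeq(-)^\vee\circ\mathfrak{L}\colon \CDGA_{\kk/A}\longrightarrow \MOD_{A/\mathbb{T}_A}.
\]
Here $\mathfrak{L}$ is the explicit cotangent functor $B\mapsto(\mathbb{L}_A\to\mathbb{L}_{A/B})$, and the bottom arrow $(-)^\vee$ is the left adjoint from Example~\ref{Koszul-mod-over-under} applied with $L=\mathbb{L}_A$.

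Recalling from Example~\ref{Koszul-mod-over-under} that $(-)^\vee\colon{}^{\mathbb{L}_A/}\!\MOD_A\to(\MOD_{A/\mathbb{T}_A})^{op}$ sends $(\mathbb{L}_A\overset{\ell}{\to}M)$ to $(M^\vee\overset{\ell^\vee}{\to}\mathbb{T}_A)$, plugging in $\mathfrak{L}(B)$ gives $(\mathbb{L}_{A/B})^\vee\to\mathbb{T}_A$, which is precisely $\mathbb{T}_{A/B}\to\mathbb{T}_A$ by definition of the relative tangent complex; moreover, the induced anchor of the resulting Lie algebroid is the natural morphism of tangent complexes, as expected.

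I do not anticipate any substantial difficulty; the main thing to check carefully is the bookkeeping of opposite categories and mates, i.e.~that the equivalence of left adjoints one obtains from the commuting square of right adjoints is indeed realized by the displayed composition $(-)^\vee\circ\mathfrak{L}$, and that on the Lie-algebroid side the forgetful functor to $\MOD_{A/\mathbb{T}_A}$ genuinely recovers both the underlying $A$-module and the anchor. Once these routine verifications are made, the computation is immediate.
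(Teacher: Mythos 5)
Your proposal is correct in substance, and it takes a slightly different route from the paper. The paper does not pass through the identification $\Phi_{\kk/A}(\underline{B})\simeq\mathfrak{D}_{\kk/A}(B)$; instead it invokes Proposition \ref{functorialprop} for the morphism of Koszul duality contexts built in the proof of Theorem \ref{thm-tooooop}, which says that composing with the square-zero extension functor intertwines $\Phi_{\kk/A}$ with the equivalence $\Phi_2$ of the bottom (module-level) context: the image of $\Phi_{\kk/A}(\underline{B})$ under $\textit{forget}$ is $\Phi_2$ applied to $\underline{B}\circ T\simeq\underline{\mathbb{L}_A\to\mathbb{L}_{A/B}}$ (adjunction $\mathfrak{L}\dashv T$), and this representable fmp in $^{\mathbb{L}_A/}\!\MOD_A$ is then sent by hand to $(\mathbb{L}_A\to\mathbb{L}_{A/B})^\vee=\mathbb{T}_{A/B}\to\mathbb{T}_A$. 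You instead compute $\textit{forget}\,\mathfrak{D}_{\kk/A}(B)$ directly from the commuting square of left adjoints $\textit{forget}\circ\mathfrak{D}_{\kk/A}\simeq(-)^\vee\circ\mathfrak{L}$ (which indeed follows from condition \textbf{(A)} of Definition \ref{giletjaunes}, as noted in the Remark following it, and holds on all of $\CDGA_{\kk/A}$, not only on small objects); the dualization step via Example \ref{Koszul-mod-over-under} is the same in both arguments. Your route is arguably more direct, the paper's has the advantage of only ever needing the "representable $\mapsto$ dual" identification in the elementary module context.

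One caveat: the fact you cite, $\Phi\circ\mathscr{Y}\simeq\mathfrak{D}^{op}$, is established in the sketch of Theorem \ref{bomb} only \emph{on small objects} (the Yoneda step there needs the representing object to lie in $\bold{C}^{sm}$), whereas Proposition \ref{prop-TAB} concerns an arbitrary $A$-augmented $\kk$-algebra $B$ (and is later applied to non-small $B$, e.g.\ $B=A\otimes_\kk A$). So as written this step is not literally covered by the citation. The identification $\Phi_{\kk/A}(\underline{B})\simeq\mathfrak{D}_{\kk/A}(B)$ does hold for general $B$, but you should justify it: for any small $C$,
\[
\Psi_{\kk/A}\big(\mathfrak{D}_{\kk/A}(B)\big)(C)=\mathrm{Hom}_{\LIE_{\kk/A}^{op}}\big(\mathfrak{D}_{\kk/A}(B),\mathfrak{D}_{\kk/A}(C)\big)\simeq\mathrm{Hom}_{\CDGA_{\kk/A}}\big(B,\mathrm{CE}^\bullet_{\kk/A}\mathfrak{D}_{\kk/A}(C)\big)\simeq\mathrm{Hom}_{\CDGA_{\kk/A}}(B,C)=\underline{B}(C),
\]
the middle equivalence using the unit equivalence on small objects from Proposition \ref{yolo} \textbf{(B)}; smallness of the test object $C$, not of $B$, is what matters. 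With that one-line fix your argument is complete.
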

\begin{proof}[Sketch of proof]
Using Proposition \ref{functorialprop} together with Theorem \ref{thm-tooooop}, one sees that the image of $\Phi_{\kk/A}(\underline{B})$ along the forgetful functor is equivalent to the image of the representable fmp $\underline{{\mathbb{L}_A}\to \mathbb{L}_{A/B}}$ on $^{\mathbb{L}_A/}\!\MOD_A$ along the equivalence $\mathbf{FMP}(^{\mathbb{L}_A/}\!\MOD_A)\tilde\to\MOD_{A/\mathbb{T}_A}$. One easily sees that the image of this representable fmp is indeed 
$(\mathbb{L}_A\to \mathbb{L}_{A/B})^\vee=\mathbb{T}_{A/B}\to\mathbb{T_A}$. 
\end{proof}
According to the above proposition, the image of $\Phi_{\kk/A}(X)$ of an fmp under $Spec(A)$ along the forgetful functor $\LIE_{\kk/A}\to \MOD_{A/\mathbb{T}_A}$ deserves to be denoted $\mathbb{T}_{Spec(A)/X}$ and called the ``relative tangent complex of $Spec(A)$ over $X$''. 
\begin{Rem}
Unsurprisingly, an easy calculation allows to prove that the tangent complex to an fmp under $Spec(A)$ is 
$$
\mathbb{T}_X\simeq\textit{cofib}(\mathbb{T}_{Spec(A)/X}\to\mathbb{T}_A)\,, 
$$
where $\mathbb{T}_X$ can be viewed in $\MOD_A$ because $\LIE_{\kk/A}$ is $A$-linear. 

In the case $A=k$, we get back the Lie structure\footnote{A dg-Lie $(\kk,\kk)$-algebroid is nothing but a dgla. } on 
$$
\mathbb{T}_X[-1]\simeq\textit{fib}(0\to\mathbb{T}_X)=\textit{fib}(\mathbb{T}_\kk\to\mathbb{T}_X)\simeq \mathbb{T}_{Spec(\kk)/X} \,.
$$
\end{Rem}

Even if $A$ is not $\kk$ itself, we have a fiber sequence of Lie algebroids
$$
\mathbb{T}_X[-1]\to \mathbb{T}_{Spec(A)/X}\to\mathbb{T}_A
$$
in $\MOD_A$, where $\mathbb{T}_{Spec(A)/X}$ and $\mathbb{T}_A$ are $(\kk,A)$-dg-Lie algebroids, and $\mathbb{T}_{Spec(A)/X}\to\mathbb{T}_A$ is a morphism in $\LIE_{\kk/A}$. 
The forgetful functor $\LIE_{\kk/A}\to \MOD_{A/\mathbb{T}_A}$ being a right adjoint, there is a $(\kk,A)$-dg-Lie algebroid structure on $\mathbb{T}_X[-1]$ that allows to upgrade the above sequence into a fiber sequence of $(\kk,A)$-dg-Lie algebroids. One can then show that the functor ``taking the fiber of the anchor'' actually factors through dglas over $A$. Here is a question we would like to answer : 
\par \medskip
\begin{center}
\fbox{\textit{What is the split formal moduli probem under $Spec(A)$ having $\mathbb{T}_X[-1]$ as associated dgla?}}
\end{center}

We consider the commuting diagram: 
\[
\xymatrix@C=80pt@R=40pt{
\CDGA_{\kk /A} & \ar[l]_-{\mathrm{CE}^{\bullet}_{\kk/A}}  \LIE_{\kk/A}^{op} \\
\CDGA_{A/A} \ar[u]_-{\textit{forget}} & \ar[l]_-{\mathrm{CE}^{\bullet}_A}  \LIE_{A}^{op} \ar[u]_-{\mathfrak{g}\mapsto(\mathfrak{g}\overset{0}{\to}\mathbb{T}_A)}
}
\]
It can be filled with left adjoints in the following manner: 
\[
\xymatrix@C=80pt@R=40pt{
\CDGA_{\kk /A}  \ar@<-2pt>[d]_-{V \rightarrow V \otimes_{\kk} A} \ar@<2pt>[r]^-{\mathfrak{D}_{\kk/A}} & \ar@<2pt>[l]^-{\mathrm{CE}^{\bullet}_{\kk/A}}  \LIE_{\kk/A}^{op} \ar@<-2pt>[d]_-{\Xi} \\
\CDGA_{A/A}   \ar@<2pt>[r]^-{\mathfrak{D}_{A}}  \ar@<-2pt>[u]_-{\textit{forget}} & \ar@<2pt>[l]^-{\mathrm{CE}^{\bullet}_A}  \LIE_{A}^{op} \ar@<-2pt>[u]_-{\mathfrak{g}\mapsto(\mathfrak{g}\overset{0}{\to}\mathbb{T}_A)}
}
\]
Here the right-most vertical adjunction is obtained from a Quillen adjunction: in particular $\Xi$ is the left\footnote{Recall that we are working with opposite categories: $\Xi:\LIE_{\kk/A}^{op}\to\LIE_A^{op}$. } derived functor of the functor given on models by the kernel of the anchor map. 
\begin{Rem}
Note that the forgetful functor 
\[
\CDGA_{A/A} \rightarrow \CDGA_{k/A}
\] 
maps $\CDGA_{A/A}^{sm}$ to $\CDGA_{k/A}^{sm}$; however it is not an equivalence: $\CDGA_{k/A}^{sm}$ is way bigger than $\CDGA_{A/A}^{sm}$. Typically, non-split square-zero extensions do not belong to $\CDGA_{A/A}^{sm}$. 
\end{Rem}
\begin{Lem}
The above square of adjunctions is a weak morphism of Koszul duality contexts. 
\end{Lem}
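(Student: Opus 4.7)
The plan is to verify the two conditions (A) and (B) of Definition \ref{giletjaunes}. The four functors are already specified in the diagram, so the task reduces to producing the two natural equivalences.

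For condition (A), I have to exhibit a natural equivalence $\textit{forget}\circ\mathrm{CE}^{\bullet}_A\simeq\mathrm{CE}^{\bullet}_{\kk/A}\circ\bigl(\mathfrak{g}\mapsto(\mathfrak{g}\overset{0}{\to}\mathbb{T}_A)\bigr)$. My approach is to compare the two sides directly on a projectively cofibrant $A$-dgla $\mathfrak{g}$: both sides have the same underlying graded $A$-algebra $\widehat{S}_A(\mathfrak{g}^{\vee}[-1])$ equipped with its canonical $A$-augmentation, and the multiplication is in both cases the obvious one. The Chevalley--Eilenberg differential for a $(\kk,A)$-dg-Lie algebroid, recalled in the previous subsection, is a sum of three contributions coming from the internal differential, the Lie bracket and the anchor; since the anchor of $\mathfrak{g}\overset{0}{\to}\mathbb{T}_A$ vanishes, the last summand disappears and one recovers on the nose the Chevalley--Eilenberg differential of the $A$-dgla $\mathfrak{g}$. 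The two constructions therefore coincide strictly as $A$-augmented $\kk$-cdgas.

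For condition (B), I must show that $Z(F_{2,n})=(\textit{free}\,A[-n-1]\overset{0}{\to}\mathbb{T}_A)$ agrees with $F_{1,n}=\textit{free}\,(A[-n-1]\overset{0}{\to}\mathbb{T}_A)$ in $\LIE_{\kk/A}$. My plan is to check this by Yoneda on an arbitrary $(\kk,A)$-dg-Lie algebroid $L$ with anchor $\rho_L$. On one hand, $F_{1,n}$ corepresents, by definition of the free Lie algebroid functor, morphisms of anchored modules $(A[-n-1]\overset{0}{\to}\mathbb{T}_A)\to(L\overset{\rho_L}{\to}\mathbb{T}_A)$, i.e.~$A$-linear maps $A[-n-1]\to L$ that vanish after composition with $\rho_L$. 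On the other hand, $Z(F_{2,n})$ corepresents $A$-linear $\kk$-dgla morphisms $\textit{free}\,A[-n-1]\to L$ that are anchor-compatible; since the source has zero anchor, compatibility reduces to the condition $\rho_L\circ\phi=0$, and the universal property of the free $A$-dgla converts such data into an $A$-linear map $A[-n-1]\to L$ satisfying the same vanishing condition. The two corepresenting functors thus agree, giving the desired equivalence.

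The step I expect to need the most attention is (A), where one might worry about derived subtleties in comparing the two Chevalley--Eilenberg constructions; this turns out to be harmless, because both sides are computed on the same cofibrant model and agree strictly, so no homotopical correction is needed.
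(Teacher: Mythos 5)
Your proof is correct, and it is in fact more complete than the one the paper gives. The decomposition is the same — verify conditions \textbf{(A)} and \textbf{(B)} of Definition \ref{giletjaunes} — but the substance differs. The paper treats \textbf{(A)} as given: the square of right adjoints is simply asserted to commute when it is first drawn, the implicit reason being exactly the observation you spell out, namely that for a zero-anchored algebroid the anchor term in the Chevalley--Eilenberg differential drops out, so that $\mathrm{CE}^{\bullet}_{\kk/A}(\mathfrak g\overset{0}{\to}\mathbb{T}_A)$ and $\textit{forget}\,\mathrm{CE}^{\bullet}_A(\mathfrak g)$ coincide on models with projective underlying $A$-module. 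For \textbf{(B)} the paper's proof is a one-liner on the cdga side: it notes that $\textit{forget}\,(A\oplus A[n])$ is again $A\oplus A[n]$, i.e.\ it checks $T(E_{2,n})\simeq E_{1,n}$, which is the consequence of \textbf{(A)}+\textbf{(B)} (via $E_n\simeq\mathfrak D'F_n$) used later in Proposition \ref{functorialprop}, rather than the literal condition $Z(F_{2,n})\simeq F_{1,n}$. You instead verify the literal condition on the Lie side, identifying $(\textit{free}_A\,A[-n-1],\,\text{zero anchor})$ with $\textit{free}\,(A[-n-1]\overset{0}{\to}\mathbb{T}_A)$; this is correct, and can equivalently be seen by noting that the two composites of right adjoints $\LIE_{\kk/A}\to\MOD_A$ agree (both are the fiber of the anchor), so the corresponding composites of left adjoints agree — a formulation that automatically upgrades your hom-set Yoneda argument to mapping spaces. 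As written, your strict corepresentability argument already produces an isomorphism of point-set models, which suffices here because the free objects involved are computed on cofibrant inputs and the zero-anchor functor preserves weak equivalences. So your route buys a genuine verification of the condition as stated, while the paper's line only records the cdga-level identity it needs downstream.
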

\begin{proof}
The only thing left to prove is condition \textbf{(B)}, which is obvious: indeed, $\textit{forget}\,(A\oplus A[n])$ is precisely $A\oplus A[n]$, viewed as a $\kk$-algebra. 
\end{proof}
Consequently, Proposition \ref{functorialprop} tells us that we have the following commuting diagram 
\[
\xymatrix@C=40pt@R=20pt{
\LIE_{\kk/A} \ar[r]^-{\Psi_{\kk/A}} \ar[dd]^{\Xi}& \bold{FMP}_{\kk/A} \ar[dd]^-{\circ \textit{forget}} \ar[rd]^-{\mathbb{T}}& \\
&&\MOD_A \\
\LIE_A \ar[r]^-{\Psi_A} & \bold{FMP}_{A/A} \ar[ru]_-{\mathbb{T}}&
}
\]
Let $X$ be a formal moduli problem under $Spec(A)$, and let $L_X:=\Phi_{\kk/A}(X)$ be its \textit{relative tangent Lie algebroid}. We then have: 
$$
\Xi(L_X)\simeq \mathfrak g_{X\circ\textit{forget}}\,.
$$
Hence the Lie dgla structure on $\mathbb{T}_X[-1]$ obtained by taking the fiber of the anchor map of the relative tangent Lie algebroid $L_X$ coincide with the one given on $\mathbb{T}_{X\circ\textit{forget}}$. 
This answers our question: the split formal moduli problem under $Spec(A)$ having $\Xi(L_X)$ as tangent Lie algebra is $X\circ\textit{forget}$. 
\begin{Rem}\label{heuristicremark}
Geometrically, and following the intuitive ideas presented in the introduction, one shall understand $X$ as a formal thickening $Spec(A)\to X$ of $Spec(A)$. 
The geometric interpretation of $X\circ\textit{forget}$ is then as the split formal thickening $Spec(A)\to Spec(A)\times X\to Spec(A)$ given by the graph\footnote{Actually, the formal neighborhood of the graph. } of the previous one. 
Making this idea more precise is a bit complicated as $Spec(A)$ is initial for the two $\infty$-categories of fmp's that we are considering. 
Hence $Spec(A)\times-$ does nothing, and shall be understood as composing with the forgetful functor. 
\end{Rem}

\subsection{Generalizations}\label{3.5}

Before going to the last, more geometric, Section of this survey, let us mention a few unnecessary assumptions that we have made for the sake of exposition: 
\begin{itemize}
\item[--] one can replace $\kk$ with any cdga sitting (cohomologically) in non-positive degree and containing $\mathbb{Q}$ (as a sub-cdga). 
\item[--] one can replace $A$ with any $\kk$-cdga with a reflexive cotangent complex $\mathbb{L}_{A/\kk}$ in $\MOD_A$. 
\item[--] one can replace $\kk$ and $A$ with presheaves of such cdgas on some given $\infty$-category. 
\end{itemize}
\begin{Rem}\label{comparewithcptvv}
One could ask if all this still works for commutative algebra objects in a gentle enough stable model category $M$, e.g.~satisfying the standing assumptions of \cite[\S1.1]{CPTVV}. 
Ideally, we would like to be able to allow $M$ to be the category of graded mixed modules (\textit{see} \S\ref{appendix-A.1}), and cover the following situation: $\kk=\mathbb{D}_{X_{DR}}$ is the mixed crystalline structure sheaf of a derived Artin stack $X$ and $A=\mathcal{B}_X$ is the mixed principal parts of $X$ (after \cite[Definition 2.4.11]{CPTVV}), which are graded mixed cdgas (\textit{see} \S\ref{appendix-A.1}, again). We will come back to the comparison with the \cite{CPTVV} in the next Section. 
\end{Rem}
In particular, let $\kk\to B\to A$ be a sequence of cdgas, with $\kk$ and $B$ sitting cohomologically in non-positive degree, and both $\mathbb{L}_{A/B}$ and $\mathbb{L}_{A/\kk}$ reflexive in $\MOD_A$. One can show in a very similar fashion that the weak morphism of Koszul duality contexts from the previous subsection generalizes as follows: 
\[
\xymatrix@C=80pt@R=40pt{
\CDGA_{\kk /A}  \ar@<-2pt>[d]_-{V \rightarrow V \otimes_B A} \ar@<2pt>[r]^-{\mathfrak{D}_{\kk/A}} & \ar@<2pt>[l]^-{\mathrm{CE}^{\bullet}_{\kk/A}}  \LIE_{\kk/A}^{op} \ar@<-2pt>[d]_-{-\underset{\mathbb{T}_{A/\kk}}{\times}\mathbb{T}_{A/B}} \\
\CDGA_{B/A}   \ar@<2pt>[r]^-{\mathfrak{D}_{B/A}}  \ar@<-2pt>[u]_-{\textit{forget}} & \ar@<2pt>[l]^-{\mathrm{CE}^{\bullet}_{B/A}}  \LIE_{B/A}^{op} \ar@<-2pt>[u]_-{(L\to \mathbb{T}_{A/B})\mapsto(L\to\mathbb{T}_{A/\kk})}
}
\]
This leads (again after Proposition \ref{functorialprop}) to a commuting square 
\[
\xymatrix@C=40pt@R=20pt{
\LIE_{\kk/A} \ar[r]^-{\Psi_{\kk/A}} \ar[dd]^{-\underset{\mathbb{T}_{A/\kk}}{\times}\mathbb{T}_{A/B}}& \bold{FMP}_{\kk/A} \ar[dd]^-{\circ \textit{forget}} \ar[rd]^-{\mathbb{T}}& \\
&&\MOD_A \\
\LIE_{B/A} \ar[r]^-{\Psi_{B/A}} & \bold{FMP}_{B/A} \ar[ru]_-{\mathbb{T}}&
}
\]
If $X$ lies in $\bold{FMP}_{\kk/A}$, then it has a relative tangent Lie algebroid $\mathbb{T}_{Spec(A)/X}\to\mathbb{T}_{A/\kk}$. 
It follows from the above that the relative tangent Lie algebroid\footnote{Note that we slightly abuse the notation here. As we have seen in Proposition \ref{prop-TAB}, $\mathbb{T}_{A/B}$ is the underlying $A$-module of the $(\kk,A)$-dg-Lie algebroid $\Psi_{\kk/A}(\underline{B})$. } $\mathbb{T}_{Spec(B)/X\circ\textit{forget}}$ of $X\circ\textit{forget}$, viewed as a $(\kk,A)$-dg-Lie algebroid using the Lie algebroid morphism $\mathbb{T}_{A/B}\to\mathbb{T}_{A/\kk}$, is equivalent to the pull-back 
$$
\mathbb{T}_{Spec(A)/X}\underset{\mathbb{T}_{A/\kk}}{\times}\mathbb{T}_{A/B}\,.
$$
In the spirit of Remark \ref{heuristicremark}, one geometrically interprets $X\circ\textit{forget}$ as the following ``$Spec(B)$-split formal thickening of $Spec(A)$'': 
$$
Spec(A)\to Spec(B)\times X\,.
$$
\begin{Rem}
One can actually show that $\LIE_{B/A}\simeq (\LIE_{\kk/A})_{/\mathbb{T}_{A/B}}$. 
In particular, this shows that $\bold{FMP}_{B/A}\simeq (\bold{FMP}_{\kk/A})_{/\underline{B}}$. 

This shall be interpreted geometrically as follows. If we have a $Spec(B)$-split formal thickening of $Spec(A)$, then the map $X\to Spec(B)$ uniquely factors through $X\to\underline{B}$, which is terminal among such formal thickenings (being the formal completion of the map $Spec(A)\to Spec(B)$). 
\end{Rem}

There is yet another geometric situation one may want to understand in terms of Lie algebroids: given a fmp $Spec(B)\to X$ under $Spec(B)$, one can look at it under $Spec(A)$ by just composing\footnote{And, maybe, completing afterwards. } with $Spec(A)\to Spec(B)$. On the level of anchored modules, we have a functor 
\begin{eqnarray*}
\MOD_{B/\mathbb{T}_{B/\kk}} & \longrightarrow & \MOD_{A/\mathbb{T}_{A/\kk}} \\
L & \longmapsto & L\otimes_BA\underset{\mathbb{T}_{B/\kk}\otimes_BA}{\oplus}\mathbb{T}_{A/\kk}\,.
\end{eqnarray*}
Assume that $B\to A$ is a cofibration betwen cofibrant cdgas. Hence the morphism $\mathbb{T}_{B/\kk}\otimes_BA\to\mathbb{T}_{A/\kk}$ in $\MOD_A$ is represented by $d\phi:\mathrm{Der}_\kk(B)\otimes A\to\mathrm{Der}_\kk(A)$ in $\mod_A$, which is moreover a fibration. Hence the above fiber product in $\MOD_A$ can be computed as an ordinary fiber product in $\mod_A$: more precisely, it is $\ker(\rho\otimes_BA-d\phi)$, where $\rho:L\to\mathrm{Der}_\kk(B)$ is the anchor map. In this situation, it has been shown by Higgins and Mackenzie \cite{HM} that one can put a Lie bracket on $L$ that turns it into a Lie algebroid\footnote{In \cite{HM} the authors work in the differential setting and nothing is dg. Nevertheless, their construction carry on without any problem. Their only technical assumption is that the map along which they pull-back is submersive, which is fine in our setting thanks to the cofibrancy of $B\to A$. }. We therefore get a functor $\LIE_{\kk/B}\to \LIE_{\kk/A}$ that actually factors as 
$$
\LIE_{\kk/B}\longrightarrow\, ^{\mathbb{T}_{A/B}/}\!\LIE_{\kk/A} \longrightarrow \LIE_{\kk/A}
$$

The main upshot of this sketchy discussion is that if we are given a commuting square of cdgas
\[
\xymatrix{
B_1 \ar[r] \ar[d] & B_2 \ar[d] \\
A_1 \ar[r] & A_2
}
\] 
where $A_1$ and $B_1$ cohomologically sit in non-positive degree and contain $\mathbb{Q}$, and $\mathbb{L}_{A_2/B_1}$ and $\mathbb{L}_{A_2/A_1}$ are reflexive in $\MOD_{A_2}$, then we have a pull-back functor 
$$
\LIE_{B_1/B_2}\to \LIE_{B_1/A_2}\to\LIE_{A_1/A_2}\,.
$$

\section{Global aspects}

\subsection{Formal derived prestacks and formal thickenings}

We start with several definitions and statements, mainly extracted from \cite{CPTVV,GR1,GR2}. 

Let us denote by $\mathbf{dAff}_\kk^{f.p.}$ the opposite $\infty$-category of non-positively graded $\kk$-cdgas $A$ that are \textit{almost finitely presented}: $H^0(A)$ is finitely generated as a $\kk$-algebra and $H^i(A)$ is a finitely presented $H^0(A)$-module. We then define the $\infty$-category $\mathbf{dPrSt}_\kk^{f.p.}$ of \textit{locally almost finitely presented} derived prestacks over $\kk$ to be the $\infty$-category of presheaves on $\mathbf{dAff}_\kk^{f.p.}$. 

We say that a locally almost finitely presented derived prestack $F$ is \textit{nilcomplete} (\textit{convergent} in the terminology of \cite{GR1}) if the canonical map 
$$
F(A)\to \underset{n}{\lim}F(A_{\leq n})
$$
is an equivalence, where $A_{\leq n}$ denotes the $n$-th Postnikov truncation of $A$. 

The full sub-$\infty$-category $\mathbf{dPrSt}_\kk^{f.t.}$ of $\mathbf{dPrSt}_\kk^{f.p.}$ spanned by nilcomplete prestacks\footnote{The superscript ``f.t.'' stands for \textit{(locally almost of) finite type}. } is equivalent to the essential image of the right Kan extension from the full sub-$\infty$-category $\mathbf{dAff}_\kk^{f.t.}\subset \mathbf{dAff}_\kk^{f.p.}$ spanned by eventually coconnective cdgas (\textit{see} \cite[Proposition 1.4.7 of Chapter 2]{GR1}), where a cdga $A$ is called \textit{eventually coconnective} if $H^n(A)=0$ for $n<\!<0$. 

A \textit{formal derived prestack} is a nilcomplete locally almost finitely presented derived prestack\footnote{Nilcomplete locally almost finitely presented prestacks are called said to be \textit{locally almost of finite type (laft)} in \cite{GR1}. } $F$ with the following to properties: 
\begin{itemize}
\item[--] $F$ is infinitesimally cohesive: for any cartesian square of almost finitely presented non-positively graded $\kk$-cdgas 
$$
\xymatrix{
A \ar[r]\ar[d] & A_1 \ar[d] \\
A_2 \ar[r] & A_0
}
$$
such that each $H^0(A_i)\to H^0(A_0)$ is surjective with nilpotent kernel, then the induced square 
$$
\xymatrix{
F(A) \ar[r]\ar[d] & F(A_1) \ar[d] \\
F(A_2) \ar[r] & F(A_0)
}
$$
is cartesian as well. 
\item[--] $F$ admits a procotangent complex. Here we say that $F$ admits a procotangent complex if it admits a procotangent complex $\mathbb{L}_{F,x}$ at every point $x:Spec(A)\to F$, and for every morphism of points 
$$
\xymatrix{
Spec(A) \ar[rr]^u \ar[rd]^x && Spec(B)\ar[ld]^y \\\
& F & 
}
$$
the induced morphism $u^*:\mathbb{L}_{F,y}\otimes_BA\to \mathbb{L}_{F,x}$ is an equivalence. We say that $F$ admits a procotangent complex at a given point $x:Spec(A)\to F$ if the functor $\mathbb{D}er_F(x,-)$ of derivations is co-prorepresentable. 
\end{itemize}

We denote by $\bold{FdPrSt}_\kk$ the $\infty$-category of formal derived prestacks over $\kk$. 

For a derived prestack $F$, we denote by $F_{red}$ its associated \textit{reduced prestack} $F_{red}:=\underset{Spec(A)\to F}{colim}Spec(A_{red})$, where $A_{red}$ denotes the quotient of $H^0(A)$ by its maximal nilpotent ideal.  We also consider the \textit{de Rham prestack} $F_{DR}$ of $F$, that is defined as follows: $F_{DR}(A):=F(A_{red})$. We now summarize the properties of reduced and de Rham prestacks: 
\begin{itemize}
\item[--] there are canonical maps $F_{red}\to F\to F_{DR}$, inducing equivalences $(F_{red})_{DR}\tilde\to F_{DR}$ and $F_{red}\tilde\to (F_{DR})_{red}$. This implies in particular that a morphism $F\to G$ induces an equivalence $F_{red}\tilde\to G_{red}$ between their  reduced prestacks if and only if it induces an equivalence  $F_{DR}\tilde\to G_{DR}$ between their de Rham prestacks. 
\item[--] $(-)_{red}$ is left adjoint to $(-)_{DR}$. 
\end{itemize}

For any formal derived prestack $X$, one can consider the sub-$\infty$-category $\mathbf{Thick}(X)$ of $^{X/}\!\bold{FdPrSt}$ spanned by those morphisms $X\to F$ that induce an equivalence $X_{red}\to F_{red}$ between the associated reduced prestacks. We call it the $\infty$-category of formal thickenings of $X$. 

\begin{Prop}[\cite{GR2}, Chapter 5, Proposition 1.4.2]\label{propositionGR}
Let $A$ be a non-positively graded cdga that is almost finitely presented and eventually coconnective (\textit{i.e.} $A$ is almost of finite type). Then there is an equivalence of $\infty$-categories 
$$
\bold{FMP}_{\kk/A}\simeq \mathbf{Thick}\big(Spec(A)\big)\,.
$$ 
\end{Prop}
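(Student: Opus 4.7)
The plan is to construct explicit functors in each direction and show they are mutually inverse. First, I would define the restriction functor $\Psi : \mathbf{Thick}\big(Spec(A)\big) \to \bold{FMP}_{\kk/A}$ by
\[
\Psi(F)(B) := F(B) \underset{F(A)}{\times} \{i\}\,,
\]
where $i : Spec(A) \to F$ is the structure map of the thickening, and $B\in\CDGA_{\kk/A}^{sm}$. To see $\Psi(F)$ is a formal moduli problem one checks contractibility at $B = A$ (immediate) and pullback preservation along small morphisms. For the latter, observe that each elementary morphism in $\CDGA_{\kk/A}$ (i.e. a pullback of $A \to A \oplus A[n]$ with $n \geq 1$) induces an isomorphism on $H^0$; therefore a small morphism $B \to A$ corresponds under $Spec$ to a square-zero-type embedding $Spec(A) \hookrightarrow Spec(B)$ whose kernel ideal is nilpotent on $H^0$. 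The required pullback preservation then follows from infinitesimal cohesion of $F$.

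For the inverse, I would construct $\Phi : \bold{FMP}_{\kk/A} \to \mathbf{Thick}\big(Spec(A)\big)$ by sifted colimit extension. The tautological functor $\CDGA_{\kk/A}^{sm,op} \to \mathbf{Thick}\big(Spec(A)\big)$, $B \mapsto \big(Spec(A) \to Spec(B)\big)$, is well-defined by the $H^0$-isomorphism observation above. Since every fmp is built from small-object representables by sifted colimits (as in the proof of Theorem \ref{bomb}), one extends $\Phi$ by requiring it to preserve these colimits. It then remains to verify that the resulting prestack lies in $\bold{FdPrSt}_\kk$: nilcompleteness can be ensured by right Kan extending along the inclusion $\mathbf{dAff}^{f.t.}_\kk \hookrightarrow \mathbf{dAff}^{f.p.}_\kk$; infinitesimal cohesion follows from the pullback-preservation of $X$ applied to fiber products of small $A$-augmented cdgas under nilpotent surjections; and the existence of a procotangent complex at $i$ reduces to the tangent complex of $X$ (\S 2.6), while at a general point it is obtained by base change, using the reflexive cotangent complex hypothesis on $A$ (see \S\ref{3.5}).

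The main obstacle is showing $\Phi \circ \Psi \simeq \mathrm{id}$ on $\mathbf{Thick}\big(Spec(A)\big)$. On representables $Spec(B)$ with $B$ small this is straightforward, as both functors unwind to the representable fmp by $B$. Extending to general formal thickenings $F$ requires showing that $F$ is the colimit (in $\bold{FdPrSt}_\kk$) of the diagram of its small $A$-augmented affine infinitesimal neighborhoods. This is a pro-representability statement, which uses infinitesimal cohesion, nilcompleteness, and the existence of a procotangent complex to reduce an arbitrary point $Spec(R) \to F$ (with $R$ almost of finite type) to a compatible system of points $Spec(B) \to F$ with $B \in \CDGA_{\kk/A}^{sm}$, via the induced map $Spec(R_{red})\to F_{red}\simeq Spec(A_{red})$ and successive Postnikov-type lifts. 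The reverse identity $\Psi \circ \Phi \simeq \mathrm{id}$ is then automatic from preservation of sifted colimits by $\Psi$ (which in turn reduces to the Yoneda lemma on small objects). This pro-representability step is precisely the technical content of \cite[Chapter 5, Proposition 1.4.2]{GR2}, and constitutes the only non-formal ingredient of the argument.
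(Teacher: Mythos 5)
Your construction follows the same skeleton as the paper's proof: the restriction functor sending a thickening $F$ to $B\mapsto F(B)\times_{F(A)}\{i\}$ (i.e.\ evaluation on $Spec(A)\to Spec(B)$ for $B$ small and $A$-augmented), a left adjoint obtained by left Kan extension / colimit extension, the remark that the only non-obvious property of the extension is the procotangent complex, which comes from pro-representability of fmps, and the identification of the crux: a formal thickening of $Spec(A)$ must be recovered from its restriction to small $A$-augmented affines.

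It is exactly at that crux that your argument has a genuine gap. You reduce it to a pro-representability statement established by ``successive Postnikov-type lifts'' and then attribute that statement to \cite[Chapter 5, Proposition 1.4.2]{GR2} --- but that reference \emph{is} the proposition being proved, so as written the decisive step is circular rather than demonstrated. The paper handles it by a concrete and essentially formal two-step reduction: infinitesimal cohesion shows that a formal thickening is determined by its values on finite-type cdgas $B$ with $B_{red}\simeq A_{red}$, and the inclusion of $\CDGA_{\kk/A}^{sm}$ into that category admits a left adjoint $B\mapsto A\times_{A_{red}}B$, which transports everything to small $A$-augmented algebras with no Postnikov or hypercover analysis; supplying this reduction (or an actual proof of your pro-representability claim) is what is missing. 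Two smaller points: an elementary morphism in $\CDGA_{\kk/A}$ does \emph{not} induce an isomorphism on $H^0$ (it is surjective with nilpotent kernel, cf.\ Proposition \ref{prop-referee}); the nilpotent-surjection statement is what you actually need to invoke infinitesimal cohesion, so the argument survives, but the claim as stated is false. Likewise, ``$\Psi\circ\Phi\simeq\mathrm{id}$ is automatic from preservation of sifted colimits by $\Psi$'' is not automatic: $\Psi$ involves a pullback over $F(A)$, and pullbacks of spaces commute with filtered, not general sifted, colimits, so this unit also requires justification.
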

\begin{proof}[Idea of the proof]
We have a pull-back functor $\mathbf{Thick}\big(Spec(A)\big)\to \bold{FMP}_{\kk/A}$ along 
\[
\CDGA_{\kk/A}^{sm}\to \CDGA_\kk^{f.t.}:=\big(\mathbf{dAff}_\kk^{f.t.}\big)^{op}\,,
\]
that consists in evaluating our prestack on $Spec(A)\to Spec(B)$ with $B$ a small $A$-augmented $\kk$-cdga. This pull-back functor has a left adjoint, given by a left Kan extension\footnote{Observe that the left Kan extension of a functor on small $A$-augmented cdgas is a priori just a derived prestack locally almost of finite type. It is an exercise to check that left Kan extensions of fmps are sent to formal thickenings. The only non-obvious property to check that it admits a procotangent complex, which follows from the prorepresentability of fmps. }. 
One can prove (using infinitesimal cohesiveness) that a formal thickening is completely determined by its restriction on $Spec(B)$'s for which $B_{red}\simeq A_{red}$. We conclude by observing that the functor $\CDGA_{\kk/A}^{sm}\to \CDGA_\kk^{f.t.}$ obviously factors through the category of those cdgas of finite type with reduced algebra equivalent to $A_{red}$, and that this has a left adjoint: sending a $B$ such that $B_{red}\simeq A_{red}$ to $A\times_{A_{red}}B$. 
\end{proof}
As a consequence, we have the following useful Corollary: 
\begin{Cor}\label{crazycor}
For any non-positively graded cdga $A$ almost of finite type, there is an equivalence 
$$
\bold{FMP}_{\kk/A}\simeq~^{\underline{A}/}\!\bold{FMP}_{\kk/A_{red}}\,.
$$
Moreover, for any formal derived prestack $X\to Spec(A)$ over $Spec(A)$ that exhibits $Spec(A)$ as a formal thickening of $X$, we have an equivalence 
$$
\mathbf{Thick}(X)_{/Spec(A)}\simeq ~^{X/}\!\bold{FMP}_{A/A_{red}}\,.
$$
\end{Cor}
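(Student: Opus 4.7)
My plan is to reduce both equivalences to Proposition \ref{propositionGR}, applied together with the slice identification $\bold{FMP}_{B/A}\simeq (\bold{FMP}_{\kk/A})_{/\underline{B}}$ from the Remark at the end of \S\ref{3.5}. The guiding observation is that, for a formal derived prestack $F$, a morphism $Spec(R)\to F$ is a formal thickening precisely when $F_{red}\simeq Spec(R_{red})$; consequently, slicing constructions on fmp's translate directly into slice constructions on $\mathbf{Thick}$, with the thickening condition being forced by compatibility with the smaller reduced prestack.

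For the first equivalence, I would begin by applying Proposition \ref{propositionGR} to both $A$ and $A_{red}$ (both almost of finite type, the latter because it is reduced and concentrated in degree zero, hence automatically eventually coconnective), yielding $\bold{FMP}_{\kk/A}\simeq \mathbf{Thick}\big(Spec(A)\big)$ and $\bold{FMP}_{\kk/A_{red}}\simeq \mathbf{Thick}\big(Spec(A_{red})\big)$. Since this equivalence is implemented by left Kan extension, the representable $\underline{A}\in \bold{FMP}_{\kk/A_{red}}$ corresponds to the formal thickening $Spec(A_{red})\to Spec(A)$. Taking the coslice under $\underline{A}$ thus yields triples $(F, Spec(A_{red})\to F, Spec(A)\to F)$ with the first arrow a formal thickening and the two arrows compatible; compatibility forces $F_{red}\simeq Spec(A_{red})\simeq Spec(A)_{red}$, so that $Spec(A)\to F$ is automatically itself a formal thickening. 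Conversely, any $(F, Spec(A)\to F)\in \mathbf{Thick}\big(Spec(A)\big)$ produces such data via $Spec(A_{red})\simeq F_{red}\to F$, giving the first equivalence.

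For the second equivalence, I would apply the identification $\bold{FMP}_{B/A}\simeq (\bold{FMP}_{\kk/A})_{/\underline{B}}$ of \S\ref{3.5} with $B=A$ and $A$ replaced by $A_{red}$, combined with Proposition \ref{propositionGR}, to obtain
\[ \bold{FMP}_{A/A_{red}}\simeq (\bold{FMP}_{\kk/A_{red}})_{/\underline{A}}\simeq \mathbf{Thick}\big(Spec(A_{red})\big)_{/Spec(A)}. \]
The prestack $X$ is naturally an object of $\mathbf{Thick}\big(Spec(A_{red})\big)_{/Spec(A)}$: being a formal thickening of $Spec(A)$, it satisfies $X_{red}\simeq Spec(A)_{red}\simeq Spec(A_{red})$, and comes equipped with its structural map to $Spec(A)$. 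Coslicing under $X$ then produces formal prestacks $F$ with compatible morphisms $X\to F\to Spec(A)$ and $Spec(A_{red})\to F$, and the map $X\to F$ is automatically a formal thickening because $X_{red}\simeq Spec(A_{red})\simeq F_{red}$. This recovers exactly $\mathbf{Thick}(X)_{/Spec(A)}$.

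The main technical point will be to verify the identification of $\underline{A}\in \bold{FMP}_{\kk/A_{red}}$ with the formal thickening $Spec(A_{red})\to Spec(A)$ under Proposition \ref{propositionGR}; this amounts to unwinding the left Kan extension implementing that equivalence and checking that the representable fmp on $A$ is sent to the formal prestack represented by $A$ with its natural augmentation to $A_{red}$. A secondary issue is that the slice identification from \S\ref{3.5} is itself only sketched, so one should carefully verify that its proof carries through with $B=A$ and $A$ replaced by $A_{red}$, which is guaranteed under the hypothesis that $A$ is almost of finite type with suitably reflexive cotangent complexes $\mathbb{L}_{A/\kk}$ and $\mathbb{L}_{A/A_{red}}$ in $\MOD_A$.
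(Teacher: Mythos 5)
Your proof is correct and follows essentially the same route as the paper: both parts reduce to Proposition \ref{propositionGR}, the observation that a thickening $Spec(A_{red})\to Spec(A)$ (resp.\ $Spec(A_{red})\to X$) identifies $\mathbf{Thick}$ of the bigger object with a coslice of $\mathbf{Thick}(Spec(A_{red}))$, the identification of $\underline{A}$ with that thickening, and the slice identification $\bold{FMP}_{A/A_{red}}\simeq(\bold{FMP}_{\kk/A_{red}})_{/\underline{A}}$ from \S\ref{3.5}. The only cosmetic difference is that the identification of $\underline{A}$ is seen most directly in the pullback (restriction) direction of Proposition \ref{propositionGR}, rather than by unwinding the left Kan extension.
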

\noindent In the second part of the Corollary, we abusively denote by the same symbol the formal derived prestack $X$ and its associated formal moduli problem under $Spec(A_{red})$. 
\begin{proof}
Let $X\to Y$ be a morphism of formal derived prestacks that induces an equivalence $X_{red}\tilde\to Y_{red}$ between their associated reduced prestacks. Then we obviously have an equivalence $\mathbf{Thick}(Y)\simeq~^{Y/}\!\mathbf{Thick}(X)$. 

To prove the first part of the Corollary, we begin with the obvious observation that the map $Spec(A_{red})\to Spec(A)$ is a formal thickening of $Spec(A_{red})$. Hence we have an equivalence 
$$
\mathbf{Thick}\big(Spec(A)\big)\simeq~^{Spec(A)/}\!\mathbf{Thick}\big(Spec(A_{red})\big)\,.
$$
Then we notice that the fmp $\underline{A}\in\mathbf{FMP}_{\kk/A_{red}}$ is the pullback of $Spec(A)$ along 
$\CDGA_{\kk/A_{red}}^{sm}\to \CDGA_\kk^{f.t.}$.  
We finally use the above Proposition to get a chain of equivalences 
$$
\bold{FMP}_{\kk/A}\simeq\mathbf{Thick}\big(Spec(A)\big)\simeq~^{Spec(A)/}\!\mathbf{Thick}\big(Spec(A_{red})\big)\simeq~^{\underline{A}/}\!\bold{FMP}_{\kk/A_{red}}\,.
$$
For the second part of the statement, we observe that $X\to Spec(A)$ being a formal thickening means that the induced map $X_{red}\to Spec(A_{red})$ is an equivalence. This in turns teaches us that $Spec(A_{red})\simeq X_{red}\to X$ is a formal thickening, and thus can be understood as a fmp under $Spec(A_{red})$. Hence we have  
$$
\mathbf{Thick}(X)\simeq~^{X/}\!\mathbf{Thick}\big(Spec(A_{red})\big)\simeq~^{X/}\!\mathbf{FMP}_{\kk/A_{red}}\,,
$$
As a consequence we get that $\mathbf{Thick}(X)_{/Spec(A)}\simeq~^{X/}\!(\mathbf{FMP}_{\kk/A_{red}})_{/\underline{A}}$. We conclude by observing that $(\mathbf{FMP}_{\kk/A_{red}})_{/\underline{A}}\simeq \mathbf{FMP}_{A/A_{red}}$ (\textit{see} Subsection \ref{3.5}). 
\end{proof}

\subsection{DG-Lie algebroids and formal thickenings}

The consequences of Corollary \ref{crazycor} are of particular interest. Let $B\to A$ be a morphism in $\CDGA_\kk^{f.t.}$, and assume that the induced morphism $B_{red}\to A_{red}$ is an isomorphism. 
\begin{Prop}\label{gplusdenom}
We have equivalences: 
\begin{itemize}
\item[--] $\bold{FMP}_{\kk/A}\simeq ~^{\mathbb{T}_{A_{red}/A}/}\!\LIE_{\kk/A_{red}}$. 
\item[--] $\bold{FMP}_{B/A}\simeq ~^{\mathbb{T}_{B_{red}/A}/}\!\LIE_{B/B_{red}}$. 
\end{itemize}
\end{Prop}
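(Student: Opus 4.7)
The plan is to prove both equivalences as applications of Corollary \ref{crazycor}, combined with Nuiten's theorem (Theorem \ref{thm-tooooop} and its extension sketched in Section \ref{3.5}) and Proposition \ref{prop-TAB}, which identifies the underlying anchored module of the Lie algebroid associated to a representable formal moduli problem.

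For the first equivalence, I would start by applying the first part of Corollary \ref{crazycor}, which gives $\bold{FMP}_{\kk/A}\simeq~^{\underline{A}/}\!\bold{FMP}_{\kk/A_{red}}$, where $\underline{A}$ is the representable fmp associated to $A$ viewed as an $A_{red}$-augmented $\kk$-cdga. Since $A_{red}$ is a reduced finitely generated $\kk$-algebra, hence a cohomologically bounded non-positively graded cdga, Nuiten's Theorem \ref{thm-tooooop} yields an equivalence $\bold{FMP}_{\kk/A_{red}}\simeq\LIE_{\kk/A_{red}}$, and Proposition \ref{prop-TAB} identifies the image of $\underline{A}$ with a $(\kk,A_{red})$-Lie algebroid whose underlying anchored module is $\mathbb{T}_{A_{red}/A}\to\mathbb{T}_{A_{red}}$. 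Passing to under-categories yields the first equivalence.

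For the second equivalence, the hypothesis $B_{red}\tilde\to A_{red}$ is exactly what is required for the morphism $Spec(A)\to Spec(B)$ to exhibit $Spec(B)$ as a formal thickening of $Spec(A)$. Applying the second part of Corollary \ref{crazycor} with $X=Spec(A)$ and with $B$ in the role of the ambient cdga gives $\mathbf{Thick}(Spec(A))_{/Spec(B)}\simeq~^{Spec(A)/}\!\bold{FMP}_{B/B_{red}}$. Combining Proposition \ref{propositionGR} with the identification $\bold{FMP}_{B/A}\simeq (\bold{FMP}_{\kk/A})_{/\underline{B}}$ recalled in the final remark of Section \ref{3.5}, the left-hand side is identified with $\bold{FMP}_{B/A}$. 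I then invoke the generalization of Nuiten's theorem relative to the base $B$ from Section \ref{3.5} to obtain $\bold{FMP}_{B/B_{red}}\simeq\LIE_{B/B_{red}}$, and use the relative analog of Proposition \ref{prop-TAB} to identify the fmp $\underline{A}\in\bold{FMP}_{B/B_{red}}$ (namely $A$ viewed as a $B_{red}$-augmented $B$-cdga via $B\to A\to A_{red}\simeq B_{red}$) with the Lie algebroid whose underlying anchored module is $\mathbb{T}_{B_{red}/A}\to\mathbb{T}_{B_{red}/B}$.

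The main obstacle is essentially bookkeeping: verifying that the hypotheses for Nuiten's theorem over the base $B$ are met (in particular, cohomological boundedness of $B$ and reflexivity of $\mathbb{L}_{B_{red}/B}$ in $\MOD_{B_{red}}$), and being careful with directions and base-change issues when identifying $\underline{A}$ inside $\bold{FMP}_{B/B_{red}}$. The boundedness and reflexivity conditions follow automatically from $B$ being almost of finite type; the identification of $\underline{A}$ uses the assumption $A_{red}=B_{red}$ so that $\mathbb{T}_{A_{red}/A}$ and $\mathbb{T}_{B_{red}/A}$ coincide. Once these points are settled, both equivalences follow formally from the three tools listed above.
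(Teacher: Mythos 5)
Your proposal is correct and follows essentially the same route as the paper, whose proof is exactly the two chains $\bold{FMP}_{\kk/A}\simeq~^{\underline{A}/}\!\bold{FMP}_{\kk/A_{red}}\simeq~^{\mathbb{T}_{A_{red}/A}/}\!\LIE_{\kk/A_{red}}$ and $\bold{FMP}_{B/A}\simeq~^{\underline{A}/}\!\bold{FMP}_{B/B_{red}}\simeq~^{\mathbb{T}_{B_{red}/A}/}\!\LIE_{B/B_{red}}$, obtained from Corollary \ref{crazycor}, Theorem \ref{thm-tooooop} (with its relative extension from \S\ref{3.5}), and Proposition \ref{prop-TAB}. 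Your explicit detour through $\mathbf{Thick}\big(Spec(A)\big)_{/Spec(B)}$ for the second equivalence is precisely what the paper summarizes as ``(the proof of) Corollary \ref{crazycor}'', so it is the same argument spelled out in slightly more detail.
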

\begin{proof}
This is a direct consequence of (the proof of) Corollary \ref{crazycor}, Theorem \ref{thm-tooooop}, and Proposition \ref{prop-TAB}. 
Indeed, we have two chains of equivalences: 
$$
\bold{FMP}_{\kk/A}\simeq~^{\underline{A}/}\!\bold{FMP}_{\kk/A_{red}}\simeq ~^{\mathbb{T}_{A_{red}/A}/}\!\LIE_{\kk/A_{red}}
$$
and
$$
\bold{FMP}_{B/A}\simeq~^{\underline{A}/}\!\bold{FMP}_{B/B_{red}}\simeq ~^{\mathbb{T}_{B_{red}/A}/}\!\LIE_{B/B_{red}}\,.
$$
\end{proof}
In the rest of this subsection we will provide (sketches of) two alternative proofs that do not rely on formal thickenings and Proposition \ref{propositionGR}. 
We actually prove the following more general result: 
\begin{Thm}\label{thm-lastminute}
Consider a sequence of $\kk$-cdgas $K\to B\to A$, all cohomologically bounded and concentrated in non-positive degree, and with $B$ being small in $\CDGA_{\kk/A}$. Then we have an equivalence 
$$
\bold{FMP}_{K/B}\simeq~^{\underline{B}/}\!\bold{FMP}_{K/A}\,.
$$
\end{Thm}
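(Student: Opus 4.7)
The strategy is to realize the statement as an instance of a more general categorical principle: for any deformation context $(\bold C,E)$ and any small object $c\in\bold C^{sm}$, the $\infty$-category of formal moduli problems on the slice $(\bold C)_{/c}$ (with the pulled-back deformation context) is equivalent to the $\infty$-category of formal moduli problems on $\bold C$ equipped with a map from the representable $\underline c$.

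\textbf{Step 1 (slice identification).} First, I would establish the equivalence of deformation contexts
$$
\big(\CDGA_{K/B},\,(B\oplus B[n])_n\big)\;\simeq\;\big((\CDGA_{K/A})_{/B},\,\tilde E\big),
$$
where the right-hand side uses the slice deformation context $\tilde E$ obtained by pulling $(A\oplus A[n])_n$ back along $B\to A$ (concretely $\tilde E_n=(B\oplus B[n]\to B)$, since $A\oplus A[n]\times_AB\simeq B\oplus B[n]$). The underlying equivalence of $\infty$-categories sends $(R\to B)$ to $(R\to B\to A,\,R\to B)$. Because $B\to A$ is small in $\CDGA_{K/A}$, and small morphisms compose and are stable under pullback, this restricts to an equivalence on the full subcategories of small objects.

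\textbf{Step 2 (reduction to a general categorical fact).} I would then reduce to the statement: for any deformation context $(\bold C,E)$ and any small $c$, there is a natural equivalence
$$
\bold{FMP}\big((\bold C)_{/c},\tilde E\big)\;\simeq\;{}^{\underline c/}\bold{FMP}(\bold C,E).
$$
The forward functor sends an fmp $X$ under $\underline c$ to the functor on the slice defined by $\overline X(d,f):=X(d)\times_{X(c)}\{x_c\}$, the homotopy fiber of $X(f)\colon X(d)\to X(c)$ over the distinguished basepoint $x_c\in X(c)$ coming from the map $\underline c\to X$ evaluated at $\mathrm{id}_c\in\underline c(c)$. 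Pointedness $\overline X(c,\mathrm{id})\simeq *$ is automatic, and pullback-preservation follows from pullback-preservation of $X$ together with the commutation of homotopy fibers and homotopy pullbacks. The inverse functor is an unstraightening/Kan-extension-type construction along the forgetful $(\bold C^{sm})_{/c}\to\bold C^{sm}$, whose fiber over $d$ is the parametrized family $\{f\in\underline c(d)\mapsto Y(d,f)\}$; the canonical section provides the map from $\underline c$.

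\textbf{Step 3 (verification).} Finally, I would check that these two constructions are mutually quasi-inverse and that both preserve the fmp axioms. Tautologically, the data of ``an fmp with a map from $\underline c$'' is the same as a functor on $\bold C^{sm}$ together with a distinguished section along $\underline c$, which in turn is the same as a compatible family of pointed spaces indexed by $(\bold C^{sm})_{/c}$, i.e.~a functor on the slice with the pointedness condition at $(c,\mathrm{id})$.

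\textbf{Main obstacle.} The technically delicate step is verifying that the inverse (unstraightening) construction produces a functor that preserves pullbacks along small morphisms, thereby defining a genuine fmp on $\bold C$. This will rely crucially on the smallness of $c$, which controls the mapping spaces $\underline c(d)$ for small $d$ and ensures that the unstraightening interacts correctly with the small pullback diagrams appearing in the fmp axiom.
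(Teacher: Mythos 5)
Your Step 1 contains a genuine error, and it sits exactly at the point where the theorem is nontrivial. The pullback of the spectrum object along $B\to A$ is not what you claim: in $\CDGA_{K/A}$ one has $(A\oplus A[n])\times_A B\simeq B\oplus A[n]$, the trivial square-zero extension of $B$ by the $B$-module $A[n]$ (with $B$ acting through $B\to A$), and this is \emph{not} $B\oplus B[n]$ unless $B\to A$ is an equivalence. Concretely, for $K=\kk$, $B=\kk[t]/t^2$, $A=\kk$ (which satisfies all the hypotheses), the two contexts $(B\oplus B[n])_n$ and $(B\oplus A[n])_n$ on $\CDGA_{\kk/B}$ have genuinely different classes of small objects: any morphism that is small for $(B\oplus B[n])_n$ has a fiber (of underlying complexes) admitting a finite filtration by shifts of $B$, so the Euler characteristic of its fiber is divisible by $\chi(B)=2$, whereas $\mathrm{fib}\,(B\oplus\kk[n]\to B)=\kk[n]$ has Euler characteristic $\pm 1$. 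Hence, even granting the general principle of your Step 2, the slice identification would compute ${}^{\underline{B}/}\bold{FMP}_{K/A}$ in terms of fmps for the context $(B\oplus A[n])_n$, not for the context $(B\oplus B[n])_n$ that defines $\bold{FMP}_{K/B}$ and feeds into Theorem \ref{thm-tooooop}. Comparing these two contexts on $\CDGA_{K/B}$ is not a formality; it is precisely where the smallness of $B$ has to do work (the same delicate point the paper's sketch confronts when it invokes smallness of $B$ to relate small objects over $A$ with objects over $B$), and your proposal never addresses it.

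Second, the inverse functor in your Step 2 is not constructed: the ``unstraightening/Kan-extension-type construction'' is exactly the part that carries the content, and you acknowledge under ``Main obstacle'' that you have not checked it lands in fmps. The paper's proof makes both directions explicit and elementary: starting from the adjunction $\iota\colon\CDGA_{K/B}\adjointHA\CDGA_{K/A}\colon p$ with $p(C)=C\times_A B$, one direction is $p^*X=X(-\times_A B)$ together with the canonical map $\underline{B}\to p^*X$, and the other is $p_*X=\iota^*X\times_{X(B)}*$ --- which is your fiberwise construction --- and the whole proof consists of checking that unit and counit are equivalences via two pullback-square computations: the unit uses that $X$ preserves the cartesian square relating $C$ and $C\times_A B$ over $B\times_A B$ (whose bottom arrow is a base change of the small map $C\to B$), and the counit uses $X(A)\simeq *$ and pasting of pullback squares. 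So while your ``forward'' functor agrees with the paper's $p_*$, the argument that it is an equivalence --- which is the actual theorem --- is missing, and the context bookkeeping it is meant to rest on is incorrect as stated.
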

Observe that $B$ is small in $\CDGA_{\kk/A}$ if and only if it is so in $\CDGA_{K/A}$. Also note that if $B$ is almost of finite type and $A=B_{red}$, then $B$ is small in $\CDGA_{\kk/B_{red}}$ and thus we recover in particular Proposition \ref{gplusdenom}. 
\begin{proof}[Sketch of proof of the Theorem]
We first observe that there is an adjunction 
$$
\iota \colon \CDGA_{K/B} \adjointHA \CDGA_{K/A} \colon p\,,
$$
where $\iota(C)=C$ and $p(C)=C\underset{A}{\times}B$. 

Since $B$ is small, and $p$ commutes with limits (being a right adjoint), then $p$ preserves small morphisms and $p^*:Fun(\CDGA_{K/B},\SSET)\to Fun(\CDGA_{K/A},\SSET)$ sends fmps to fmps. Moreover, for every $X\in \bold{FMP}_{K/B}$, there is a natural morphism $\underline{B}\to p^*X$:  
$$
\underline{B}=Hom_{\CDGA_{K/A}}(B,-)\simeq Hom_{\CDGA_{K/A}}(B,-)\times X(B)\to X(-\underset{A}{\times}B)=p^*X\,.
$$
Hence we obtain a functor $\bold{FMP}_{K/B}\to~^{\underline{B}/}\!\bold{FMP}_{K/A}$, which we abusively still denote $p^*$. 

This functor has a right adjoint $p_*$, which we describe now. 
Since $B$ is small, and $\iota$ commutes with pull-backs, then $\iota$ preserves small morphisms. 
But it does not preserves fmps, as it doesn't preserve terminal objects. 
To cure this we define, for an $X\in~^{\underline{B}/}\!\bold{FMP}_{K/A}$, 
$$
p_*X:=\iota^*X\underset{X(B)}{\times}*\in\bold{FMP}_{K/B}\,,
$$
where $*\to X(B)$ is given by $\{id_B\}\to \underline{B}(B)\to X(B)$. 

Let us prove that the adjoint pair $(p^*,p_*)$ is actually an equivalence. 
We start by describing the unit of the adjunction: for every $X\in\bold{FMP}_{K/B}$, we have 
$$
p_*p^*X	=				\iota^*p^*X\underset{p^*X(B)}{\times}*
				\simeq 	X(-\underset{A}{\times}B)\underset{X(B\underset{A}{\times}B)}{\times}X(B)\,.
$$
Hence there is an obvious natural morphism $X\to p_*p^*X$. 
We claim that it is an equivalence. 
Indeed, for every small $C\to B$, the square 
$$
\xymatrix{
C \ar[r]\ar[d] & B \ar[d] \\
C\underset{A}{\times}B \ar[r] & B\underset{A}{\times}B
}
$$
is cartesian, and its bottom horizontal morphism is small. 
Therefore $X$ preserves this cartesian square, and thus the unit morphism $X\to p_*p^*X$ is an equivalence. 

We now turn to describing the counit of the adjunction. 
For every $X\in~^{\underline{B}/}\!\bold{FMP}_{K/A}$, we have 
$$
p^*p_*X = p_*X(-\underset{A}{\times}B)=X(-\underset{A}{\times}B)\underset{X(B)}*\,,
$$
and the counit is the composed morphism $p^*p_*X\to X(-\underset{A}{\times}B)\to X$. 
Thus we have, for every $C\to A$, a composition of pull-back squares 
$$
\xymatrix{
p^*p_*X(C) \ar[r]\ar[d] & {*} \ar[d] \\
X(C\underset{A}{\times}B) \ar[r]\ar[d] & X(B) \ar[d]\\
X(C) \ar[r] & X(A)\simeq*
}
$$
This shows that the counit morphism $p^*p_*X(C)\to X(C)$ is an equivalence, and ends the proof of the Theorem. 
\end{proof}

In the remainder of this subsection, we work under the hypothesis of Theorem \ref{thm-lastminute}. 
As a consequence of the Theorem we get (using Theorem \ref{thm-tooooop} and Proposition \ref{prop-TAB}) an equivalence
$$
\LIE_{K/B}\simeq ~^{\mathbb{T}_{A/B}/}\!\LIE_{K/A}\,.
$$
The functor $\LIE_{K/B}\to ~^{\mathbb{T}_{A/B}/}\!\LIE_{K/A}$ can be proven to be the derived version of Higgins--Mackenzie pull-back of Lie algebroids (\textit{see} the end of \S\ref{3.5}). 
The inverse functor is not so easy to describe purely in terms of Lie algebroids, but there is nevertheless a nice way of directly proving that there is an equivalence 
$$
\bold{FMP}_{K/B}\simeq ~^{\mathbb{T}_{A/B}/}\!\LIE_{K/A}
$$
that we explain now. Our strategy is (again) as follows: 
\begin{itemize}
\item[--] we first show that there is a weak Koszul duality context involving $\CDGA_{\kk/B}$ and $^{\mathbb{T}_{A/B}/}\!\LIE_{K/A}$. 
\item[--] we then show that it comes with a weak morphism to a Koszul duality context having the form of Example \ref{Koszul-mod-over-under}. 
\item[--] we finally claim that this weak morphism is a morphism, showing that the original weak Koszul duality context is a Koszul duality context (after Proposition \ref{suppositoire}). 
\item[--] we conclude using Theorem \ref{bomb}. 
\end{itemize}

\paragraph{The weak Koszul duality context}~\\
We begin with the following rather evident result: 
\begin{Lem}\label{lem-cptvv}
We have that $\mathfrak{D}_{K/A}(B\to A)\simeq \mathbb{T}_{A/B}$ and $CE^\bullet_{K/A}(\mathbb{T}_{A/B})\simeq B$. 
In particular, we have an adjunction 
$$
\mathfrak{D}_{K/A}\colon\CDGA_{K/B} \adjointHA \Big(^{\mathbb{T}_{A/B}/}\!\LIE_{K/A}\Big)^{op}\colon CE^\bullet_{K/A}\,.
$$
\end{Lem}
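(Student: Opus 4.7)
The plan is to derive all three claims from already-established machinery, without any new input beyond Theorem~\ref{thm-tooooop} and the formal properties of Koszul duality contexts developed in \S\ref{oups}--\S\ref{Koszul-mod-over-under}.

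First I would compute $\mathfrak{D}_{K/A}(B\to A)$ by invoking Proposition~\ref{prop-TAB} in its $K$-linear version, which is made available by the discussion of \S\ref{3.5}. Under the equivalence of small objects with good objects provided by Proposition~\ref{yolo}(C) applied to the Koszul duality context of Theorem~\ref{thm-tooooop}, the representable fmp $\underline{B}$ corresponds to $\mathfrak{D}_{K/A}(B\to A)$, and Proposition~\ref{prop-TAB} identifies the underlying anchored module as $\mathbb{T}_{A/B}\to\mathbb{T}_{A/K}$. Alternatively, one can obtain the same identification directly by passing to the (commuting) square of left adjoints in the weak morphism of Koszul duality contexts used in the proof of Theorem~\ref{thm-tooooop}: this gives $\textit{forget}^{op}\circ\mathfrak{D}_{K/A}\simeq(-)^\vee\circ\mathfrak{L}$, which evaluated on $(B\to A)$ yields $(\mathbb{L}_{A/K}\to\mathbb{L}_{A/B})^\vee=(\mathbb{T}_{A/B}\to\mathbb{T}_{A/K})$. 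Either way we identify $\mathfrak{D}_{K/A}(B\to A)$ with the dg-Lie algebroid abusively denoted $\mathbb{T}_{A/B}$, settling the first claim. For the second claim, since $B$ is small in $\CDGA_{\kk/A}$ by hypothesis (and therefore small in $\CDGA_{K/A}$, as observed right after the statement of Theorem~\ref{thm-lastminute}), Proposition~\ref{yolo}(B) ensures that the unit map $B\to CE^\bullet_{K/A}\mathfrak{D}_{K/A}(B\to A)$ is an equivalence, which combined with the first claim gives $CE^\bullet_{K/A}(\mathbb{T}_{A/B})\simeq B$.

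The final statement is a purely formal consequence: any adjoint pair $F:\bold{C}\adjointHA\bold{D}:G$ together with an equivalence $F(c)\simeq d$ (equivalently $G(d)\simeq c$) descends to an adjunction between the slice categories $\bold{C}_{/c}$ and $\bold{D}_{/d}$. Applying this with $F=\mathfrak{D}_{K/A}$, $G=CE^\bullet_{K/A}$, $c=B$, and $d=\mathbb{T}_{A/B}$, together with the canonical identifications $\CDGA_{K/B}=(\CDGA_{K/A})_{/B}$ and $({}^{\mathbb{T}_{A/B}/}\LIE_{K/A})^{op}=(\LIE_{K/A}^{op})_{/\mathbb{T}_{A/B}}$, we obtain the adjunction displayed in the lemma. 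The lemma is indeed ``rather evident'' once Theorem~\ref{thm-tooooop} is in place; the only mild verification is that Proposition~\ref{prop-TAB} applies with base $K$ instead of $\kk$, and this is justified by the generalizations recalled in \S\ref{3.5}.
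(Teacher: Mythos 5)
Your proof is correct and follows essentially the same route as the paper: the first identification is Proposition~\ref{prop-TAB} (in the $K$-linear setting of \S\ref{3.5}), the second is the unit equivalence on small objects from Proposition~\ref{yolo}, and the adjunction comes from slicing the Koszul duality adjunction of Theorem~\ref{thm-tooooop} using $\CDGA_{K/B}\simeq(\CDGA_{K/A})_{/B\to A}$. The only quibble is your parenthetical ``equivalently $G(d)\simeq c$'', which is not equivalent to $F(c)\simeq d$ for a general adjunction, but this is harmless since you establish both equivalences independently.
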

\begin{proof}
The first part of the statement is just Proposition \ref{prop-TAB}, and the second part follows from the fact that $B$ is small. 
Finally, one concludes using Theorem \ref{thm-tooooop} and the obvious observation that $\CDGA_{K/B}\simeq(\CDGA_{K/A})_{/B\to A}$.  
\end{proof}
The deformation context is still the one from Example \ref{exa-kA}: $\big(\CDGA_{K/B},(B\oplus B[n])_n\big)$, and the dual deformation context is $^{\mathbb{T}_{A/B}/}\!\LIE_{K/A}$ equipped with the spectrum object 
$$
\mathbb{T}_{A/B}\oplus \textit{free}\,(A[-n-1]\overset{0}{\to}\mathbb{T}_{A/K})\,.
$$
for its opposite $\infty$-category. 
\begin{Rem}
The reader can check directly that this is indeed a spectrum object, but this will also be a consequence of our construction of the weak morphism below (\textit{see} Lemma \ref{lem-en-retard}). 
\end{Rem}
Together, the deformation context and the dual deformation context define a weak Koszul duality context. Indeed: 
\begin{align*}
& \, CE^\bullet_{K/A}\big(\mathbb{T}_{A/B}\oplus \textit{free}\,(A[-n-1]\overset{0}{\to}\mathbb{T}_{A/K})\big) \\
\simeq & \, CE^\bullet_{K/A}(\mathbb{T}_{A/B})\underset{A}{\times}CE^\bullet_{A}\big(\textit{free}\,(A[-n-1])\big) \\
\simeq & \, B\underset{A}{\times}(A\oplus A[n]) \\
\simeq & \, B\oplus B[n]\,.
\end{align*}

\paragraph{The weak morphism}~\\
We already have the following adjunctions: 
\[
\xymatrix@C=80pt@R=40pt{
\CDGA_{K/B}  \ar@<-2pt>[d]_-{(C\to B) \mapsto (\mathbb{L}_{B/K}\to \mathbb{L}_{B/C})} \ar@<2pt>[r]^-{\mathfrak{D}_{K/A}} & \ar@<2pt>[l]^-{\mathrm{CE}^{\bullet}_{K/A}} \Big(^{\mathbb{T}_{A/B}/}\!\LIE_{K/A}\Big)^{op}  \\
^{\mathbb{L}_{B/K}/}\!\MOD_B   \ar@<2pt>[r]^-{(-)^\vee}  \ar@<-2pt>[u]_-{d\mapsto B\underset{d}{\oplus}M[-1]} & \ar@<2pt>[l]^-{(-)^\vee}  (\MOD_{B/\mathbb{T}_{B/K}})^{op} 
}
\]
\begin{Lem}\label{lem-en-retard}
There is an adjunction 
$$
\mathfrak{L}\colon\MOD_{B/\mathbb{T}_{B/K}} \adjointHA~^{\mathbb{T}_{A/B}/}\!\LIE_{K/A}\colon\mathfrak{R}
$$
such that 
$$
\mathfrak{L}(B[-n-1]\overset{0}{\to}\mathbb{T}_{B/K})\simeq \mathbb{T}_{A/B}\oplus \textit{free}\,(A[-n-1]\overset{0}{\to}\mathbb{T}_{A/K})\,.
$$
\end{Lem}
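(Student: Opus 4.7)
The plan is to build $\mathfrak L$ as a composition of three left adjoints, and $\mathfrak R$ as the composition of the corresponding right adjoints, using the fiber sequence $\mathbb T_{A/B}\to\mathbb T_{A/K}\to\mathbb T_{B/K}\otimes_BA$ in $\MOD_A$ as the glue between the two extremities. More precisely, I would factor $\mathfrak L$ as
\[
\MOD_{B/\mathbb T_{B/K}} \xrightarrow{-\otimes_BA} \MOD_{A/\mathbb T_{B/K}\otimes_B A} \xrightarrow{\;\textit{fib}\;} {}^{\mathbb T_{A/B}\to\mathbb T_{A/K}/}\MOD_{A/\mathbb T_{A/K}} \xrightarrow{\textit{free}} {}^{\mathbb T_{A/B}/}\LIE_{K/A}\,,
\]
where: the first arrow has as right adjoint the restriction of scalars along $B\to A$ combined with the pull-back along $\mathbb T_{B/K}\to\mathbb T_{B/K}\otimes_B A$, namely $(N\to \mathbb T_{B/K}\otimes_B A)\mapsto N\underset{\mathbb T_{B/K}\otimes_B A}{\times}\mathbb T_{B/K}$; the second arrow is an equivalence given by taking the fiber of $L\to\mathbb T_{A/K}$ with $\mathbb T_{A/K}\to\mathbb T_{B/K}\otimes_B A$ (its inverse being the cofiber functor), which makes sense because $\MOD_A$ is stable; and the third arrow is the coslice version of the $\textit{free}\dashv\textit{forget}$ adjunction already recalled in \S 3.2, whose right adjoint is the plain forgetful functor.

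First, I would check that the middle arrow is really an equivalence: this is a purely formal consequence of stability, since in a stable $\infty$-category the datum of a factorization $X\to L\to Y$ of a fixed arrow $X\to Y$ is equivalent, through $\textit{cofib}$, to the datum of a map $L/X\to Y/X$. Second, I would check that the three pairs of adjoint functors compose correctly, and in particular that $\mathfrak R:={}-\underset{\mathbb T_{B/K}\otimes_BA}{\times}\mathbb T_{B/K}\;\circ\;\textit{cofib}\;\circ\;\textit{forget}$ is truly right adjoint to $\mathfrak L$.

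Finally, I would compute $\mathfrak L$ on the spectrum object $(B[-n-1]\overset{0}{\to}\mathbb T_{B/K})$, going through the three functors step by step: tensoring up over $B$ yields $(A[-n-1]\overset{0}{\to}\mathbb T_{B/K}\otimes_B A)$; taking fiber with $\mathbb T_{A/K}\to\mathbb T_{B/K}\otimes_B A$ yields, because the anchor is zero, $A[-n-1]\oplus\mathbb T_{A/B}\to\mathbb T_{A/K}$ with the structure map $\mathbb T_{A/B}\hookrightarrow A[-n-1]\oplus\mathbb T_{A/B}$; and finally applying the coslice version of $\textit{free}$ produces
\[
\mathbb T_{A/B}\underset{\textit{free}(\mathbb T_{A/B})}{\coprod}\bigl(\textit{free}(\mathbb T_{A/B})\coprod\textit{free}(A[-n-1]\overset{0}{\to}\mathbb T_{A/K})\bigr)\simeq \mathbb T_{A/B}\oplus \textit{free}(A[-n-1]\overset{0}{\to}\mathbb T_{A/K})\,,
\]
the $\oplus$ on the right standing for the coproduct in $\LIE_{K/A}$, viewed as an object under $\mathbb T_{A/B}$ via the left leg.

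The main obstacle will be keeping track of the three successive adjunctions in the coslice settings and checking the Beck--Chevalley-type cancellation $\mathbb T_{A/B}\coprod_{\textit{free}(\mathbb T_{A/B})}\textit{free}(\mathbb T_{A/B})\simeq \mathbb T_{A/B}$; once this is laid out, the verification of the formula on spectrum objects is a routine computation using the fiber sequence relating $\mathbb T_{A/B}$, $\mathbb T_{A/K}$ and $\mathbb T_{B/K}\otimes_BA$.
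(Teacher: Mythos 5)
Your construction is correct, and your $\mathfrak{L}$ is in fact literally the paper's: the composite of your first two steps (base change $-\otimes_BA$ on slices, followed by the stable slice/coslice rotation identifying $\MOD_{A/\mathbb{T}_{B/K}\otimes_BA}$ with factorizations of $\mathbb{T}_{A/B}\to\mathbb{T}_{A/K}$) is exactly the paper's functor $M\mapsto M\otimes_BA\underset{\mathbb{T}_{B/K}\otimes_BA}{\times}\mathbb{T}_{A/K}$, and your third step is the same coslice free functor $N\mapsto \mathbb{T}_{A/B}\underset{\textit{free}\,\mathbb{T}_{A/B}}{\coprod}\textit{free}\,N$; the computation on the spectrum objects is also the same. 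Where you genuinely diverge is on the right adjoint: you obtain $\mathfrak{R}$ for free, as the composite of the three elementary right adjoints (forget, $\textit{cofib}$, then restriction along $B\to A$ together with pullback along $\mathbb{T}_{B/K}\to\mathbb{T}_{B/K}\otimes_BA$), so adjointness is automatic, whereas the paper instead describes $\mathfrak{R}$ explicitly as $L\mapsto \mathrm{CE}^\bullet\big(\mathbb{T}_{A/B},\textit{cofib}(\mathbb{T}_{A/B}\to L)\big)$, using the $\mathbb{T}_{A/B}$-module structure on the cofiber, and explicitly defers the verification that this is right adjoint to $\mathfrak{L}$ to later work. Your route is thus the cheaper way to the Lemma as stated (existence of the adjunction plus the formula for $\mathfrak{L}$ on spectrum objects), while the paper's buys an explicit, more geometric formula for $\mathfrak{R}$ (e.g.\ $\mathfrak{R}(\mathbb{T}_{A/K})\simeq\mathbb{T}_{B/K}$ is read off directly); by uniqueness of adjoints the two descriptions must agree, and identifying your pullback $\textit{cofib}(\mathbb{T}_{A/B}\to L)\underset{\mathbb{T}_{B/K}\otimes_BA}{\times}\mathbb{T}_{B/K}$ with the Chevalley--Eilenberg description is essentially the verification the paper postpones. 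One point to make explicit in your write-up: the identification $\textit{cofib}(\mathbb{T}_{A/B}\to\mathbb{T}_{A/K})\simeq\mathbb{T}_{B/K}\otimes_BA$ comes from dualizing the transitivity sequence for $K\to B\to A$ and so uses the standing reflexivity/finiteness hypotheses on the cotangent complexes; the paper makes the same tacit identification, so this is a shared assumption rather than a gap.
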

\begin{proof}[Sketch of proof]
We define $\mathfrak{L}$ as the composition of the following two functors: 
\begin{itemize}
\item[--] $\MOD_{B/\mathbb{T}_{B/K}}\to~^{\mathbb{T}_{A/B}/}\MOD_{A/\mathbb{T}_{A/K}}$, sending $M\to \mathbb{T}_{B/K}$ to 
$$
M':=M\otimes_{B}A\underset{\mathbb{T}_B\otimes_BA}{\times}\mathbb{T}_{A/K}\,.
$$
Note that $0'=\textit{fib}\,(\mathbb{T}_{A/K}\to\mathbb{T}_B\otimes_BA)\simeq\mathbb{T}_{A/B}$. 
\item[--] $~^{\mathbb{T}_{A/B}/}\MOD_{A/\mathbb{T}_{A/K}}\to~^{\mathbb{T}_{A/B}/}\!\LIE_{K/A}$, sending 
$\mathbb{T}_{A/B}\to N\to \mathbb{T}_{A/K}$ to 
$$
\mathbb{T}_{A/B}\underset{\textit{free}\,\mathbb{T}_{A/B}}{\coprod}\textit{free}\,N\,.
$$
\end{itemize}
One easily checks that $\mathfrak{L}(B[-n-1])\simeq \mathbb{T}_{A/B}\oplus \textit{free}\,(A[-n-1]\overset{0}{\to}\mathbb{T}_{A/K})$. 

We also define $\mathfrak{R}$ as the composition of two functors: 
\begin{itemize}
\item[--] first a functor $^{\mathbb{T}_{A/B}/}\!\LIE_{K/A}\to \MOD_{\mathbb{T}_{A/B}}$, where $\MOD_{\mathbb{T}_{A/B}}$ denotes the $\infty$-category of modules over the $(\kk,A)$-dg-Lie algebroid ${\mathbb{T}_{A/B}}$ (which are just modules over its universal enveloping algebra). Here the functor is defined as $\textit{cofib}\,(\mathbb{T}_{A/B}\to -)$, which naturally lands in $\mathbb{T}_{A/B}$-modules\footnote{To see this, one can use models and compute the cofiber in the case when the map out of $\mathbb{T}_{A/B}$ is injective. In this case the ${\mathbb{T}_{A/B}}$-module structure is rather classical (\textit{see e.g.} \cite[\S3.1]{calaque-rendiconti}).}.  
\item[--] then the functor $\MOD_{\mathbb{T}_{A/B}}\to \MOD_{B/\mathbb{T}_B}$, sending a ${\mathbb{T}_{A/B}}$-module $E$ to its Chevalley--Eilenberg complex $CE^\bullet(\mathbb{T}_{A/B},E)$ (\textit{see e.g.} \cite[\S2.2]{calaque-rendiconti} for a definition). Here the reader has to check that the $\mathbb{T}_{A/B}$-module structure on $\textit{cofib}\,(\mathbb{T}_{A/B}\to \mathbb{T}_{A/K})\simeq\mathbb{T}_{A/K}\otimes_BA$ is given by the action of $\mathbb{T}_{A/B}$ on $A$, so that 
$$
\mathfrak{R}(\mathbb{T}_{A/K})
\simeq CE^\bullet(\mathbb{T}_{A/B},\mathbb{T}_{B/K}\otimes_BA)
\simeq \mathbb{T}_{B/K}\otimes_B CE^\bullet(\mathbb{T}_{A/B})
\simeq \mathbb{T}_{B/K}\,.
$$
\end{itemize}
The details of the construction, and the proof that $\mathfrak{R}$ is right adjoint to $\mathfrak{L}$, will appear elsewhere. 
\end{proof}
\begin{Lem}\label{lem-gpasdenom}
The square of right adjoints 
\[
\xymatrix{ \CDGA_{K/B} & \ar[l]_-{CE^\bullet_{K/A}} \Big(^{\mathbb{T}_{A/B}/}\!\LIE_{K/A}\Big)^{op} \\ 
^{\mathbb{L}_{B/K}/}\!\MOD_B \ar[u]^-{d\mapsto B\underset{d}{\oplus}M[-1]} & \ar[u]_-{Z:=\mathfrak{L}^{op}} \ar[l]^-{(-)^\vee} (\MOD_{B/\mathbb{T}_{B/K}})^{op}
}
\]
commutes. 
\end{Lem}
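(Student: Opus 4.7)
The plan is to verify the commutativity of the given square of right adjoints by establishing the equivalent commutativity of the associated square of left adjoints. By uniqueness of adjoints (the mate correspondence), it suffices to exhibit a natural equivalence
$$
\mathfrak{R}\bigl(\mathfrak{D}_{K/A}(C)\bigr) \,\simeq\, \bigl(\mathfrak{L}_B(C)\bigr)^\vee
$$
in $\MOD_{B/\mathbb{T}_{B/K}}$, for every $(C\to B)\in \CDGA_{K/B}$, where $\mathfrak{L}_B(C) := (\mathbb{L}_{B/K}\to \mathbb{L}_{B/C})$. On the abelian side, dualization directly yields $(\mathbb{T}_{B/C}\to \mathbb{T}_{B/K})$, using reflexivity of the cotangent complexes involved (guaranteed by the standing assumptions).

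On the Lie-algebroid side, I would first identify $\mathfrak{D}_{K/A}(C)$. Using Proposition \ref{prop-TAB} applied to $C$ viewed as an $A$-augmented $K$-cdga via the composite $C\to B\to A$, together with the slice equivalence $\CDGA_{K/B}\simeq (\CDGA_{K/A})_{/B}$, one sees that $\mathfrak{D}_{K/A}(C)$ is canonically the relative tangent Lie algebroid $\mathbb{T}_{A/C}$, equipped with the morphism $\mathbb{T}_{A/B}\to \mathbb{T}_{A/C}$ induced by $C\to B$ (this refines Lemma \ref{lem-cptvv}, which is the case $C=B$). Applying $\mathfrak{R}$ as constructed in Lemma \ref{lem-en-retard}, one must then compute the $\mathbb{T}_{A/B}$-module $E := \textit{cofib}(\mathbb{T}_{A/B}\to \mathbb{T}_{A/C})$ and thereafter its Chevalley--Eilenberg complex $CE^\bullet(\mathbb{T}_{A/B},E)$.

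The core computation is the following. The cotangent transitivity cofiber sequence $\mathbb{L}_{B/C}\otimes_BA\to \mathbb{L}_{A/C}\to \mathbb{L}_{A/B}$, once dualized, gives $E\simeq \mathbb{T}_{B/C}\otimes_BA$. Crucially, the residual $\mathbb{T}_{A/B}$-action on $E$ factors through its anchor action on $A$, since $\mathbb{T}_{B/C}$ is merely a $B$-module. A projection formula of the form $CE^\bullet(\mathbb{T}_{A/B},\, N\otimes_BA)\simeq N\otimes_B CE^\bullet(\mathbb{T}_{A/B})$ for a plain $B$-module $N$, combined with Lemma \ref{lem-cptvv} (which identifies $CE^\bullet(\mathbb{T}_{A/B})\simeq B$), then yields $CE^\bullet(\mathbb{T}_{A/B},E)\simeq \mathbb{T}_{B/C}$. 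The induced structure map to $\mathbb{T}_{B/K}$ is the evident one coming from $K\to C$, and this matches the abelian side.

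The main obstacle is naturality: the object-wise equivalences above must assemble into a natural equivalence of functors $\CDGA_{K/B}\to \MOD_{B/\mathbb{T}_{B/K}}$. A clean organizational device is to write down the mate natural transformation $(-)^\vee\circ \mathfrak{L}_B \Rightarrow \mathfrak{R}^{op}\circ \mathfrak{D}_{K/A}$ built from the units and counits of the four adjunctions at stake, and then to verify that it is an equivalence at every $C$ using the computation above. Coherence of the cotangent transitivity triangle, of the projection formula, and of the identification of $\mathfrak{D}_{K/A}$ in the variable $C$ is what makes this work.
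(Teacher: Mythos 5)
Your reduction to the square of left adjoints is legitimate in principle (commutativity of one square is equivalent to that of the other, by uniqueness of adjoints), but the execution has a genuine gap: you identify $\mathfrak{D}_{K/A}(C)$ with the relative tangent Lie algebroid $\mathbb{T}_{A/C}$ (pointed by $\mathbb{T}_{A/B}\to\mathbb{T}_{A/C}$) for \emph{every} $C\in\CDGA_{K/B}$, quoting Proposition \ref{prop-TAB}. That proposition describes $\Phi_{K/A}(\underline{C})$, the Lie algebroid attached to the \emph{formal moduli problem} represented by $C$, not the Koszul dual $\mathfrak{D}_{K/A}(C)$; the two agree only on small objects (this is exactly the representable-case step in the proof of Theorem \ref{bomb}, and it is why Lemma \ref{lem-cptvv} needs $B$ small). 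For a general $C$ the identification fails: already in the degenerate case $K=B=A=\kk$ and $C=\kk[x]$, one has $\mathbb{T}_{\kk/C}\simeq\kk[-1]$ with abelian bracket, whereas $\mathrm{Map}_{\LIE_\kk}\big(\mathfrak{g},\mathfrak{D}(\kk[x])\big)\simeq\mathrm{Map}_{\CALG_\kk}\big(\kk[x],\mathrm{CE}^{\bullet}(\mathfrak{g})\big)$ is the space of degree-zero cycles in the whole completed augmentation ideal of $\mathrm{CE}^{\bullet}(\mathfrak{g})$, not just its linear part, so $\mathfrak{D}(\kk[x])\not\simeq\kk[-1]$. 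Since the lemma must be verified on all objects (equivalently, at every $M\in\MOD_{B/\mathbb{T}_{B/K}}$), your argument cannot proceed through this identification. There are also secondary issues in the same vein: $\textit{cofib}(\mathbb{T}_{A/B}\to\mathbb{T}_{A/C})\simeq\mathbb{T}_{B/C}\otimes_BA$ needs dualization to commute with $-\otimes_BA$, which requires perfectness (or at least dualizability) of $\mathbb{L}_{B/C}$, not mere reflexivity, and the projection formula $CE^\bullet(\mathbb{T}_{A/B},N\otimes_BA)\simeq N\otimes_BCE^\bullet(\mathbb{T}_{A/B})$ needs a finiteness hypothesis since $CE^\bullet$ is a limit.

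The paper's proof goes in the opposite, and easier, direction: it checks the right-adjoint square directly on an object $M\to\mathbb{T}_{B/K}$. The point is that $Z(M)=\mathfrak{L}^{op}(M)$ is constructed explicitly as a pushout involving free Lie algebroids, and $CE^{\bullet}_{K/A}$, being a right adjoint, sends this pushout to a pullback of cdgas whose terms are all known: $CE^{\bullet}_{K/A}$ of a free algebroid is a (possibly non-split) square-zero extension, and $CE^{\bullet}_{K/A}(\mathbb{T}_{A/B})\simeq B$ because $B$ is small. The resulting fiber product computes to $B\underset{d}{\oplus}M^{\vee}[-1]$ with $d$ adjoint to $M\to\mathbb{T}_{B/K}$, which is exactly the other composite. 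In other words, the only Koszul-duality inputs needed are evaluations of the \emph{right} adjoint on free and on good objects, never an evaluation of $\mathfrak{D}_{K/A}$ on an arbitrary algebra; this is what your route would have to supply and cannot.
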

\begin{proof}[Sketch of proof]
The functor $CE_{K/A}$ is right adjoint, and thus preserves limits. 
In particular, it sends push-outs in $\LIE_{K/A}$ to pull-backs in $\CDGA_{K/A}$. 
Hence we have that 
$$
CE^\bullet_{K/A}\mathfrak{L}^{op}(M)\simeq (A\oplus_{d'}(M')^\vee[-1])\underset{A\oplus_{can}\mathbb{L}_{A/B}[-1]}{\times}B\,,
$$
where $d':\mathbb{L}_{A/K}\to (M')^\vee$ is adjoint to the pullback of $M\otimes_BA\to \mathbb{T}_{B/K}\otimes_BA$, and  along $\mathbb{T}_{A/K}\to \mathbb{T}_{B/K}\otimes_BA$, and $can:\mathbb{L}_{A/K}\to\mathbb{L}_{A/B}$ is the canonical map. 
Hence $CE^\bullet_{K/A}\mathfrak{L}^{op}(M)\simeq B\oplus_dM^\vee[-1]$, where $d:\mathbb{L}_{B/K}\to M^\vee$ is adjoint to $M\to\mathbb{T}_{B/K}$. 
\end{proof}
Together, Lemma \ref{lem-en-retard} and Lemma \ref{lem-gpasdenom} imply that we have a weak morphism of weak Koszul duality contexts from $\CDGA_{K/B} \adjointHA\Big(^{\mathbb{T}_{A/B}/}\!\LIE_{K/A}\Big)^{op}$ to $^{\mathbb{L}_{B/K}/}\!\MOD_B\adjointHA(\MOD_{B/\mathbb{T}_{B/K}})^{op}$. 

The proof that this weak morphism is indeed a morphism will appear elsewhere. 

\subsection{Formal moduli problems under $X$}

Let $X$ be a formal derived prestack, and let $X\to Y$ be a formal thickening of $X$.  
\begin{Lem}
For every $Spec(A)\to Y$, $X_A:=X\times_YSpec(A)\to Spec(A)$ is a formal thickening of $X_A$. 
\end{Lem}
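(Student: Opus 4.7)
The plan is to carry out two independent verifications: first that $X_A$ is itself a formal derived prestack, and second that the natural map $X_A \to Spec(A)$ induces an equivalence on reduced prestacks.

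For the first point, I would check that each of the defining conditions of $\bold{FdPrSt}_\kk$ is preserved under finite limits of prestacks. Nilcompleteness and infinitesimal cohesiveness are conditions asserting that a certain specific diagram is carried to a limit diagram in spaces, and such conditions are obviously stable under pointwise limits of functors; locally almost finite presentation is built into the ambient $\infty$-category of presheaves on $\mathbf{dAff}_\kk^{f.p.}$; and the procotangent complex at a point $x:Spec(B)\to X_A$, mapping to $x_X$, $x_Y$, $x_A$ in $X$, $Y$, $Spec(A)$ respectively, is extracted from the standard fiber sequence comparing $\mathbb{L}_{X,x_X}$, $\mathbb{L}_{Y,x_Y}$ and $\mathbb{L}_{Spec(A),x_A}$ in $\MOD_B$. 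Base change compatibility follows from the same property for $X$, $Y$ and $Spec(A)$.

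For the second point, the key move is to translate the statement into one about de Rham prestacks, using the equivalence of conditions recalled in the excerpt: a morphism of formal prestacks induces an equivalence on reductions iff it induces an equivalence on de Rham prestacks. The advantage is that, by its very definition $F_{DR}(B) = F(B_{red})$, the functor $(-)_{DR}$ is given by precomposition with $(-)_{red}$ and therefore preserves arbitrary limits of prestacks. Consequently
$$
(X_A)_{DR} \simeq X_{DR} \underset{Y_{DR}}{\times} Spec(A)_{DR}\,.
$$
Since $X \to Y$ is a formal thickening, $X_{red} \tilde\to Y_{red}$ is an equivalence, and applying $(-)_{DR}$ to $(F_{red})_{DR}\tilde\to F_{DR}$ upgrades this to an equivalence $X_{DR} \tilde\to Y_{DR}$. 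Base-changing this equivalence along $Spec(A)_{DR}\to Y_{DR}$ yields an equivalence $(X_A)_{DR}\tilde\to Spec(A)_{DR}$, and then the dictionary above delivers the desired equivalence $(X_A)_{red}\tilde\to Spec(A)_{red}$.

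The only step where one has to be careful is the verification that $X_A$ really lies in $\bold{FdPrSt}_\kk$, and specifically that it admits a procotangent complex with the correct base-change property; the remainder of the argument is formal, being just the observation that $(-)_{DR}$ is a right adjoint and so commutes with the pullback defining $X_A$.
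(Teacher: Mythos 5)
Your proposal is correct and follows essentially the same route as the paper: the paper likewise disposes of the first point by noting that formal derived prestacks are stable under pullbacks (you merely spell out the verification), and for the second point it also passes through de Rham prestacks, using that $(-)_{DR}$ preserves limits (as a right adjoint to $(-)_{red}$, which amounts to your precomposition argument) to get $(X_A)_{DR}\simeq X_{DR}\times_{Y_{DR}}Spec(A)_{DR}\simeq Spec(A)_{DR}$ and then transferring back to reduced prestacks.
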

\begin{proof}
First of all observe that formal derived prestacks are stable by pullbacks, so that $X_A$ is a formal derived prestack. 
The only thing left to prove is that $X_A\to Spec(A)$ induces an equivalence on reduced prestacks, which is equivalent to require that it induces an equivalence on de Rham prestacks. The functor $(-)_{DR}$ being a right adjoint, we have that 
$(X_A)_{DR}\simeq X_{DR}\times_{Y_{DR}}Spec(A)_{DR}\simeq Spec(A)_{DR}$. 
\end{proof}
This allows us to build a presheaf of $\infty$-categories $A\mapsto \mathbf{Thick}(X_A)_{/Spec(A)}$ on $\mathbf{dAff}^{f.t.}$, and a commuting diagram of functors
$$
\xymatrix{
\mathbf{Thick}(X)_{/Y} \ar[r]\ar[d] & \underset{Spec(A)\to Y}{lim} \mathbf{Thick}(X_A)_{/Spec(A)} \ar[d] \\
^{X/}(\mathbf{dPrSt}_\kk^{f.t.})_{/Y} \ar[r] & \underset{Spec(A)\to Y}{lim} ~^{X_A/}(\mathbf{dPrSt}_\kk^{f.t.})_{/Spec(A)}  
}
$$
The bottom horizontal functor has a left adjoint 
$$
\underset{Spec(A)\to Y}{lim} ~^{X_A/}(\mathbf{dPrSt}_\kk^{f.t.})_{/Spec(A)} \longrightarrow~^{X/}(\mathbf{dPrSt}_\kk^{f.t.})_{/Y}
$$
that sends a diagram $\big(X_A\to Z(A)\to Spec(A)\big)_{Spec(A)\to Y}$ to its colimit 
$$
X\simeq\underset{Spec(A)\to Y}{colim} X_A\to \underset{Spec(A)\to Y}{colim} Z(A)\to \underset{Spec(A)\to Y}{colim} Spec(A)\simeq Y\,. 
$$
The counit of the adjunction 
$$
\underset{Spec(A)\to Y}{colim}\big(Z\underset{Y}{\times}Spec(A)\big)\to Z
$$
is an equivalence, and thus the bottom horizontal functor is fully faithful. 
Since the two vertical functors are also fully faithful, therefore so is the remaining horizontal one. 
The next Lemma tells us how far it is from being an equivalence. 
\begin{Lem}
The essential image of $\underset{Spec(A)\to Y}{lim} \mathbf{Thick}(X_A)_{/Spec(A)}$ lies in the essential image of $~^{X/}(\mathbf{dPrSt}_\kk^{f.t.})_{/Y}$. Moreover, it is included in the sub-$\infty$-category spanned by (images of) those $X\to Z\to Y$ inducing equivalences on associated reduced stacks and such that $Z$ is infinitesimally cohesive. 
\end{Lem}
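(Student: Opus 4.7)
The plan is to start from an arbitrary compatible family $(Z_A \to \mathrm{Spec}(A))_{\mathrm{Spec}(A) \to Y}$ in the limit, use the left adjoint from the excerpt to produce the candidate $Z := \underset{\mathrm{Spec}(A) \to Y}{\mathrm{colim}}\, Z_A$ in $^{X/}(\mathbf{dPrSt}_\kk^{f.t.})_{/Y}$, and then verify the two stated properties. To see that $(Z_A)$ sits in the essential image of the fully faithful pullback functor $R$, it suffices to show that the unit of this adjunction is an equivalence at our family, i.e.\ that $Z \underset{Y}{\times} \mathrm{Spec}(A) \simeq Z_A$ for every $A$. This follows from universality of colimits in the presheaf $\infty$-topos $\mathbf{dPrSt}_\kk^{f.t.}$ together with the compatibility relation $Z_C \simeq Z_A \underset{\mathrm{Spec}(A)}{\times} \mathrm{Spec}(C)$ for morphisms $\mathrm{Spec}(C) \to \mathrm{Spec}(A)$ over $Y$: one rewrites $Z \underset{Y}{\times} \mathrm{Spec}(A)$ as a colimit of pullbacks that recovers $Z_A$ via its canonical presentation over $\mathrm{Spec}(A)$.

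Next, I verify the reduced-equivalence condition on $X \to Z$. Since $(-)_{red}$ is left adjoint to $(-)_{DR}$ and therefore preserves colimits, combined with the canonical identifications $X \simeq \underset{\mathrm{Spec}(A) \to Y}{\mathrm{colim}}\, X_A$ (using that $X \to Y$ pulled back to its own canonical presentation recovers $X$) and $Z \simeq \underset{\mathrm{Spec}(A) \to Y}{\mathrm{colim}}\, Z_A$ by construction, and with each $(X_A)_{red} \xrightarrow{\sim} (Z_A)_{red}$ being an equivalence (as $Z_A$ is a formal thickening of $X_A$ by hypothesis), one concludes $X_{red} \xrightarrow{\sim} Z_{red}$.

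The main technical step, and the expected main obstacle, is infinitesimal cohesiveness of $Z$. Given a cartesian square $B \simeq B_1 \times^h_{B_0} B_2$ of non-positively graded cdgas with each $H^0(B_i) \twoheadrightarrow H^0(B_0)$ having nilpotent kernel, I want the induced map $Z(B) \to Z(B_1) \times^h_{Z(B_0)} Z(B_2)$ to be an equivalence. Composing with $Z \to Y$ and using infinitesimal cohesiveness of $Y$ (which is itself a formal derived prestack), the argument reduces to a fiberwise statement: for each $\eta \in Y(B)$ with induced images $\eta_i \in Y(B_i)$ and $\eta_0 \in Y(B_0)$, the fiber of $Z(C) \to Y(C)$ over a point factoring through $\eta$ identifies with the space of maps from the corresponding affine chart into the pullback $Z_\eta := Z \underset{Y}{\times} \mathrm{Spec}(B) \simeq Z_B$ over $\mathrm{Spec}(B)$. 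Since $Z_B$ is itself a formal derived prestack, it is infinitesimally cohesive in the absolute sense, and combining this with the commutation of fibers with finite limits delivers the required cartesianness. The delicate bookkeeping here, involving base changes and a coherent translation between relative and absolute cohesiveness, is the crux of the argument — it amounts to the principle that a map of prestacks with infinitesimally cohesive fibers and infinitesimally cohesive base has infinitesimally cohesive total space.
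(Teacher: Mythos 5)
Your proposal is correct and, on the decisive point, takes a genuinely different route from the paper. For the first assertion and for the statement about reduced prestacks you do essentially what the paper does: the paper declares the first part obvious, and your argument (universality of colimits in the presheaf $\infty$-category together with the cartesianness of the family, $Z_C\simeq Z_A\times_{Spec(A)}Spec(C)$, giving $Z\times_Y Spec(A)\simeq Z_A$, i.e.\ the unit of the colimit/pullback adjunction is an equivalence on such families) is exactly the descent justification the authors leave implicit; the reduced-stack step via $(-)_{red}$ being a left adjoint is identical. Where you diverge is infinitesimal cohesiveness: the paper argues objectwise, writing $Z(B)=\underset{Spec(A)\to Y}{colim}\,Z_A(B)$ and commuting this colimit past the pullbacks of the cohesiveness condition on the grounds that ``filtered colimits commute with finite limits'' — note that the indexing category $(\mathbf{dAff}_\kk^{f.t.})_{/Y}$ is not obviously filtered, so this is asserted rather than proved. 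You instead argue relatively over $Y$: since $Y$ is infinitesimally cohesive, it suffices to compare fibers over points $\eta\in Y(B)$, these fibers are section spaces of $Z\times_Y Spec(B)\simeq Z_B$, which is infinitesimally cohesive by hypothesis, and since $\mathrm{Map}(B,-)$ preserves the relevant pullback the fiberwise equivalence follows; checking an equivalence of spaces over a common base on fibers over all base points is legitimate, so your principle ``cohesive base and cohesive pullbacks to affines imply cohesive total space'' is sound. What your route buys is the complete avoidance of any colimit/finite-limit commutation (hence of the filteredness claim), at the cost of leaning on the part-one descent identification; one small point to add is that cohesiveness is quantified over all almost finitely presented non-positively graded cdgas, while $Z_B$ and the identification $Z\times_YSpec(B)\simeq Z_B$ are directly available only for $B$ eventually coconnective, so a word about reducing to $\mathbf{dAff}_\kk^{f.t.}$ via nilcompleteness is needed — a reduction that the paper's own objectwise-colimit argument also uses implicitly.
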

\begin{proof}
The first part of the statement is obvious. 
Let $(Z_A)_{Spec(A)\to Y}$ be a diagram of formal derived stacks lying over the tautological diagram $\big(Spec(A)\big)_{Spec(A)\to Y}$, and such that $(Z_A)_{red}\to Spec(A_{red})$ is an equivalence. 
The functor $(-)_{red}$ being left adjoint, it commutes with colimits, and thus $Z_{red}\to Y_{red}$ is an equivalence (where 
$Z=\underset{Spec(A)\to Y}{colim}Z_A$). Moreover, filtered colimits do commute with finite limits and thus with pullbacks in particular, so that $Z$ is infinitesimally cohesive because every single $Z_A$ is.  
\end{proof}
If we could prove that derived prestacks in the essential image have a procotangent complex, then we would get that the functor $\mathbf{Thick}(X)_{/Y} \to \underset{Spec(A)\to Y}{lim} \mathbf{Thick}(X_A)_{/Spec(A)}$ is an equivalence. 
This is probably not true in full generality, but we think that it is for several examples: 
\begin{Conj}
If the functor $\bold{IndCoh}(Y)\to \bold{QCoh}(Y)$ is an equivalence, then the functor 
$$
\mathbf{Thick}(X)_{/Y}\to \underset{Spec(A)\to Y}{lim}\mathbf{Thick}(X_A)_{/Spec(A)}
$$
is an equivalence as well. 
\end{Conj}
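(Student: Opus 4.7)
The starting point is the fully faithful embedding
\[
\mathbf{Thick}(X)_{/Y}\;\hookrightarrow\;\underset{Spec(A)\to Y}{lim}\mathbf{Thick}(X_A)_{/Spec(A)}
\]
already established, so the plan is to establish essential surjectivity under the hypothesis $\bold{IndCoh}(Y)\simeq\bold{QCoh}(Y)$. Given a compatible system $(Z_A\to Spec(A))_{Spec(A)\to Y}$ in the limit, I form the colimit $Z=\underset{Spec(A)\to Y}{colim}Z_A$ in ${}^{X/}\!(\mathbf{dPrSt}_\kk^{f.t.})_{/Y}$. The preceding lemma already shows that $Z$ is infinitesimally cohesive, nilcomplete, locally almost of finite type, and that $X\to Z$ induces an equivalence on reduced prestacks, so the only remaining task is to show that $Z$ admits a procotangent complex.

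My approach is to construct this procotangent complex pointwise. Given $x:Spec(A)\to Z$ lying over $f:Spec(A)\to Y$, the base-change identification $Z_A\simeq Z\underset{Y}{\times}Spec(A)$ (which follows from the universal property of $Z$ as the colimit of the diagram $(Z_A)$) identifies $x$ with a section $Spec(A)\to Z_A$, and exhibits the relative derivation functor of $Z\to Y$ at $x$ as that of $Z_A\to Spec(A)$. Since $Z_A$ is already a formal thickening of $Spec(A)$, it admits a relative procotangent complex $\mathbb{L}_{Z_A/Spec(A),x}$, providing a candidate for $\mathbb{L}_{Z/Y,x}$. The compatibility conditions built into the limit guarantee that for any $u:Spec(A)\to Spec(B)$ over $Y$, the natural map $u^*\mathbb{L}_{Z_B/Spec(B),y}\to\mathbb{L}_{Z_A/Spec(A),u^*y}$ is an equivalence, which packages the pointwise candidates into a relative procotangent complex of $Z$ over $Y$ with the correct base-change behavior.

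To promote this to the absolute procotangent complex $\mathbb{L}_{Z,x}$ one uses the expected fiber sequence $f^*\mathbb{L}_Y\to\mathbb{L}_{Z,x}\to\mathbb{L}_{Z/Y,x}$. The main obstacle, and the place where the $\bold{IndCoh}\simeq\bold{QCoh}$ hypothesis enters, is that procotangent complexes of formal derived prestacks are naturally pro-objects whose correct ambient category, in the presence of formal directions, is $\bold{IndCoh}(Y)$ rather than $\bold{QCoh}(Y)$; without the assumed equivalence, the pointwise candidates need not glue into a $\bold{QCoh}$-theoretic object compatible with the base-change axiom. The equivalence furnishes the Beck--Chevalley-type compatibilities (compare \cite{GR2}) that allow one to verify the procotangent axioms for $Z$ globally. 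I expect the technical heart of the argument to lie precisely in formulating and checking these compatibilities across the indexing category $(\mathbf{dAff}_\kk^{f.t.})_{/Y}$, using the $\bold{IndCoh}$ formalism of Gaitsgory--Rozenblyum together with the stability of formal thickenings under base change already used above.
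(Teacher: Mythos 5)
You have not proved the statement, and it is worth stressing that the paper does not either: this is stated as a Conjecture, and the authors' own discussion goes exactly as far as your first paragraph (full faithfulness of $\mathbf{Thick}(X)_{/Y}\to \lim_{Spec(A)\to Y}\mathbf{Thick}(X_A)_{/Spec(A)}$, plus the Lemma showing the colimit $Z$ of a compatible family is nilcomplete, infinitesimally cohesive, locally almost of finite type, and reduced-equivalent to $X$), stopping precisely at the existence of a procotangent complex for $Z$ --- which is the entire content of the conjecture. Your proposal does not close that gap. The point where the hypothesis $\mathbf{IndCoh}(Y)\simeq\mathbf{QCoh}(Y)$ is supposed to do work --- assembling the relative procotangent complexes $\mathbb{L}_{Z_A/Spec(A),x}$ into an absolute one for $Z$ --- is exactly the step you defer to ``Beck--Chevalley-type compatibilities'' that you ``expect'' to form the technical heart; that is a restatement of the conjecture, not an argument. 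Concretely, admitting a procotangent complex at $x\colon Spec(A)\to Z$ means that the derivations functor $\mathbb{D}er_Z(x,-)$ is co-prorepresentable, and your fiber-sequence construction $f^*\mathbb{L}_{Y}\to\mathbb{L}_{Z,x}\to\mathbb{L}_{Z/Y,x}$ presupposes (i) a corresponding fiber sequence of derivation functors, (ii) a procotangent complex for $Y$ with the right base-change behavior at every point, and (iii) that an extension of co-prorepresentable functors by pro-(coherent) objects is again co-prorepresentable, functorially in $x$ and compatibly with base change along arbitrary morphisms of points (not only those over a fixed $Y$-point). Item (iii) is precisely the convergence statement about pro-coherent objects that the $\mathbf{IndCoh}\simeq\mathbf{QCoh}$ hypothesis is conjectured to supply, and nothing in your text verifies it or even makes precise how the hypothesis enters.

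A secondary but genuine issue: the identification $Z_A\simeq Z\times_Y Spec(A)$ does not ``follow from the universal property of $Z$ as the colimit''. Universality of colimits in the presheaf category only yields $Z\times_Y Spec(A)\simeq \mathrm{colim}_{Spec(B)\to Y}\big(Z_B\times_{Spec(B)}(Spec(B)\times_Y Spec(A))\big)$, and $Spec(B)\times_Y Spec(A)$ is not affine, so recovering $Z_A$ from this colimit is a nontrivial (co)descent assertion; it is compatible with the first part of the paper's Lemma, but it requires an argument rather than an appeal to the universal property, and without it your pointwise construction of $\mathbb{L}_{Z/Y,x}$ does not get off the ground. In sum, what you have written is a correct reduction to, and a reasonable strategy for, the conjecture (essentially the same reduction the paper already makes), but it is not a proof; to actually establish the statement you would have to carry out the pro-coherent/$\mathbf{IndCoh}$ analysis explicitly, for instance along the lines of Gaitsgory--Rozenblyum's treatment of deformation theory for prestacks locally almost of finite type.
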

Combining the above Lemma for $Y=X_{DR}$, together with the results of the previous Subsections, we get the following Theorem: 
\begin{Thm}\label{thm-final}
There is a fully faithful functor 
$$
\mathbf{Thick}(X) \to \LIE_{X_{DR}/X}:=\underset{Spec(A)\to X_{DR}}{lim}~^{\mathbb{T}_{Spec(A_{red})/X_A}/}\!\LIE_{A/A_{red}}\,.
$$
\end{Thm}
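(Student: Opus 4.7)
The plan is to combine the fully faithful functor constructed in the paragraph preceding the statement (applied to $Y=X_{DR}$) with the affine-local identifications of formal thickenings in terms of Lie algebroids established in the previous sections.

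First, since the counit of the $((-)_{red},(-)_{DR})$-adjunction is an equivalence, any formal thickening $X\to Z$ satisfies $Z_{DR}\simeq X_{DR}$ and thus admits a canonical morphism $Z\to X_{DR}$. This identifies $\mathbf{Thick}(X)$ with $\mathbf{Thick}(X)_{/X_{DR}}$, so that the fully faithful functor discussed above takes the form
$$
\mathbf{Thick}(X)\hookrightarrow \underset{Spec(A)\to X_{DR}}{lim}\mathbf{Thick}(X_A)_{/Spec(A)}\,.
$$

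Next, I will identify each pointwise factor $\mathbf{Thick}(X_A)_{/Spec(A)}$ with $^{\mathbb{T}_{Spec(A_{red})/X_A}/}\!\LIE_{A/A_{red}}$. The first Lemma above ensures that $X_A\to Spec(A)$ is itself a formal thickening, so the second part of Corollary \ref{crazycor} yields $\mathbf{Thick}(X_A)_{/Spec(A)}\simeq ~^{X_A/}\!\bold{FMP}_{A/A_{red}}$. Theorem \ref{thm-tooooop} then provides an equivalence $\bold{FMP}_{A/A_{red}}\simeq \LIE_{A/A_{red}}$, under which, by Proposition \ref{prop-TAB}, the object $X_A$ corresponds to the Lie algebroid with underlying anchored module $\mathbb{T}_{Spec(A_{red})/X_A}\to \mathbb{T}_{A_{red}/A}$; passing to coslices gives the desired identification.

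The subtle step, and the main obstacle, is to check that these pointwise identifications are compatible with the transition functors as $Spec(A)\to X_{DR}$ varies. A morphism $Spec(A')\to Spec(A)$ over $X_{DR}$ induces on the left-hand side the base-change functor on formal thickenings; under Corollary \ref{crazycor} this corresponds to a restriction functor on pointed fmp's, and in turn, via Theorem \ref{thm-tooooop} together with Proposition \ref{prop-TAB}, it must be identified with the Higgins--Mackenzie-type derived pullback of Lie algebroids described at the end of \S\ref{3.5}, which is precisely the transition functor defining the limit $\LIE_{X_{DR}/X}$. Once this naturality is established, the pointwise equivalences assemble into an equivalence of limits, and full faithfulness of the resulting functor $\mathbf{Thick}(X)\to \LIE_{X_{DR}/X}$ is inherited from the fully faithful functor above.
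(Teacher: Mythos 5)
Your proposal is correct and follows essentially the same route as the paper: you identify $\mathbf{Thick}(X)$ with $\mathbf{Thick}(X)_{/X_{DR}}$ using that $X_{DR}$ is the terminal formal thickening of $X$, feed this into the fully faithful functor $\mathbf{Thick}(X)_{/X_{DR}}\hookrightarrow \underset{Spec(A)\to X_{DR}}{\lim}\mathbf{Thick}(X_A)_{/Spec(A)}$ established in the preceding paragraphs, and identify the pointwise factors via Corollary \ref{crazycor}, Theorem \ref{thm-tooooop} and Proposition \ref{prop-TAB}. The naturality-in-$A$ compatibility you flag is a genuine subtlety, but it is left at the same implicit level in the paper (it is built into how the presheaf defining $\LIE_{X_{DR}/X}$ is set up, cf.\ the Higgins--Mackenzie pullback discussion at the end of \S\ref{3.5}), so your treatment matches the paper's argument and level of detail.
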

\begin{proof}
We simply have to prove that $\mathbf{Thick}(X)\simeq \mathbf{Thick}(X)_{/X_{DR}}$. 
This is true as $X_{DR}$ happens to be the terminal formal thickening of $X$. 
\end{proof}
We strongly believe that the functor appearing in the Theorem is actually an equivalence. This is in fact a consequence of our conjecture, as we know that $\bold{IndCoh}(X_{DR})\to \bold{QCoh}(X_{DR})$ is an equivalence (\textit{see} \cite[Proposition 2.4.4]{crystals}). 

\paragraph{Comparison with formal localization}~\\
Following Remark \ref{comparewithcptvv}, let us pretend that our results do hold in the more general graded mixed setting from \cite{CPTVV} (\textit{see} \S\ref{appendix-A.1} for a recollection), and consider the following situation: 
$\mathbf{K}=\mathbb{D}_{X_{DR}}$ is the mixed crystalline structure sheaf of a derived Artin stack $X$ and $\mathbf{A}=\mathcal{B}_X$ is the mixed principal parts of $X$ (after \cite[Definition 2.4.11]{CPTVV}). These are prestacks over $X_{DR}$ with values in graded mixed cdgas, and they are defined as follows: 
\begin{itemize}
\item[--] $\big(Spec(B)\to X_{DR}\big)\overset{\mathbf{K}}{\longmapsto}\mathbb{D}(B):=DR(B_{red}/B)\simeq CE^{\epsilon}(\mathbb{T}_{B{red}/B})$, with $|\mathbb{D}(B)|\simeq B$; 
\item[--] $\big(Spec(B)\to X_{DR}\big)\overset{\mathbf{A}}{\longmapsto}\mathbb{D}(X_B):=DR(X_B/B)$. 
\end{itemize}
We would thus have a prestack of $\infty$-categories $\LIE_{\mathbf{K}/\mathbf{A}}$ over $X_{DR}$, of which the global sections are denoted $\LIE^{f}_{X_{DR}/X}$. Presumably, this would be an ind-coherent version of $\LIE_{X_{DR}/X}$ defined above, which has also been defined as global sections of a prestack of $\infty$-categories. Let us explain why do believe so. 
\begin{itemize}
\item[--] First observe that, according to our results, we shall have an equivalence 
$$
\LIE_{\mathbb{D}(B)/\mathbb{D}(X_B)}\simeq~^{\mathbb{T}_{B_{red}/\mathbb{D}(X_B)}/}\!\LIE_{\mathbb{D}(B)/B_{red}}\,.
$$
\item[--] Then, the realization functor $|\cdot|$ and its left adjoint shall produce two adjunctions, related by a forgetful functor as follows: 
\[
\xymatrix@C=75pt@R=50pt{
~^{\mathbb{T}_{Spec(B_{red})/X_B}/}\!\LIE_{B/B_{red}} \ar@<-2pt>[d] \ar@<2pt>[r] &   \ar@<2pt>[l] ~^{\mathbb{T}_{B_{red}/\mathbb{D}(X_B)}/}\!\LIE_{\mathbb{D}(B)/B_{red}} \ar@<-2pt>[d] \\
\MOD_{B} \ar@<2pt>[r]  & \MOD_{\mathbb{D}(B)}  \ar@<2pt>[l] 
}
\]
\item[--] Finally, it has been proven in \cite[Proposition 2.2.8]{CPTVV} that, restricted to suitable subcategories of perfect modules, the adjunction 
$$
\mathbb{D}(X_B)\otimes_{B_{red}}-\colon\MOD_B \adjointHA\MOD_{\mathbb{D}(B)}\colon|\cdot|
$$
restricts to an equivalence. 
Moreover, it is conjectured that $\MOD_{\mathbb{D}(X_B)}$ is equivalent to $\mathbf{IndCoh}(X_B)$ (\textit{see e.g.} \cite[Remark 2.2.6]{CPTVV}). 	
\end{itemize}
Let us conclude with the following observation. Let $X\to Y$ be a formal thickening for which $Y$ is also a derived geometric stack. 
In particular we know that $X_{DR}\to Y_{DR}$ is an equivalence, and we thus get a sequence of graded mixed cdgas 
$$
\mathbf{K}\to\mathbf{B}:=\mathcal{B}_Y\to\mathbf{A}\,, 
$$
and hence graded mixed $(\mathbf{K},\mathbf{A})$-Lie algebroid $\mathbb{T}_{\mathbf{A}/\mathbf{B}}$. 
We therefore get a functor $\mathbf{ThickGeom}(X)\to \LIE^{f}_{X_{DR}/X}$, which shall fit into the following conjectural commuting diagram of functors: 
$$
\xymatrix{
\mathbf{ThickGeom}(X) \ar[r]\ar[d] & \mathbf{Thick}(X) \ar[d] \\
\LIE_{X_{DR}/X}^{f} \ar[r]\ar[d] & \LIE_{X_{DR}/X} \ar[d]\\
\mathbf{IndCoh}(X) \ar[r] & \mathbf{QCoh}(X)
}
$$
where $ThickGeom(X)$ denotes the subcategory of $Thick(X)$ consisting of geometric thickenings. 

\appendix

\section{Appendix}

In this appendix our aim is to compare the Chevalley-Eilenberg functor with the de Rham functor that is used in the formal localization procedure of \cite{CPTVV}. 
This is a very first step for a future comparison between the various approaches to formal geometry in the derived setting. 

\subsection{Recollection on graded mixed stuff}\label{appendix-A.1}

We fix a base cdga $A$ (not assuming anything at the moment), and start with Definitions. 
\begin{Def}~
\begin{itemize}
\item[--] A \textit{graded module} is a sequence $(M_n)_{n\in \mathbb{Z}}$ of $A$-modules. We refer to the index $n$ as the \textit{weight}. 
\item[--] We denote by $\mod_A^{gr}$ the category of graded modules. It comes equipped with the weight shifting functor $M\mapsto M(1)$, defined by $M(1)_n=M_{n+1}$. 
\item[--] A graded mixed module is a pair ($M,\epsilon)$, with $M$ a graded module and $\epsilon:M\to M[1](1)$ a morphism of graded modules such that $\epsilon[1](1)\circ\epsilon=0$ (which we will abusively write $\epsilon^2=0$). We call $\epsilon$ the \textit{mixed differential}. 
\item[--] We denote by $\mod_A^{\egr}$ the category of graded mixed modules. The weight shifting functor obviously lifts to $\mod_A^{\egr}$. 
\item[--] Both categories ($\mod_A^{gr}$ and $\mod_A^{\egr}$) are symmetric monoidal, so all categories of algebraic structures on modules that we considered for $\mod_A$ can be considered as well within them. We will add superscripts $gr$ and $\egr$ to the already introduce notation when emphasize which underlying symmetric monoidal category we are working with. For instance, $\cdga_{A}^{\egr}$ denotes the category of commutative monoids in $\mod_A^{\egr}$. 
\end{itemize}
\end{Def}
We equip $\mod_A^{gr}$ with the objectwise model structure from the one of $\mod_A$. We can transport this model structure along the forgetful functor 
$$
\mod_A^{\egr}\overset{(-)^\sharp}{\longrightarrow} \mod_A^{gr}
$$
and get a model structure\footnote{The injective one (though we will soon use the projective one, with which it is easier to compute mapping spaces, but for which the monoidal unit is not cofibrant). } on $\mod_A^{\egr}$ for which the monoidal unit $A$ is cofibrant. 
This model structure is nice enough so that all model category theoretic constructions based on $\mod_A$ still make sense on $\mod_A^{\egr}$, \textit{see e.g.} \cite[\S1.1, \S1.2 \& Appendix A]{CPTVV} and \cite[Variant 3.10]{Nuit1}. Thus we have for instance an $\infty$-category $\CDGA_A^{\egr}$ of graded mixed $A$-cdgas. 

Then recall from \cite[\S1.3]{CPTVV} that the Quillen adjunction 
$$
\mod_A \adjointHA \mod_A^{\egr}\colon\mathrm{HOM}_{\mod_A^{\egr}}(A,-)
$$
induces an adjunction 
$$
\MOD_A \adjointHA \MOD_A^{\egr}\colon|-|
$$
for which the right adjoint $|-|$, called \textit{(standard) realization}, is lax symmetric monoidal. This implies in particular that this realization functor is also well-defined on the other aforementioned categories carrying the superscript $\egr$. It has a very explicit description: 
\begin{Prop}[\cite{CPTVV}]\label{propstandardreal}
For a graded mixed module $(M,\epsilon)$, $|M|\simeq (\prod_{n\geq0}M_n,d_M+\epsilon)$, where $d_M:M\to M[1]$ is the (weight preserving) differential. 
\end{Prop}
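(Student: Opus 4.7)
Since the realization $|-|$ is by construction the right derived functor of $\mathrm{HOM}_{\mod_A^{\egr}}(A,-)$, my plan is to compute it explicitly using a well-chosen cofibrant resolution of $A$.

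The first step will be to reinterpret $\mod_A^{\egr}$ as the category of dg-modules over the graded-commutative dg-algebra $\mathcal{B}:=A\otimes_k k[\epsilon]/(\epsilon^2)$, where $\epsilon$ sits in bidegree $(1,1)$ (cohomological degree $1$ and weight $1$) and carries trivial internal differential. Under this identification $A$ becomes the $\mathcal{B}$-module $\mathcal{B}/(\epsilon)$, so that $|M|\simeq\mathrm{RHom}_{\mathcal{B}}(A,M)$.

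The second step will be a Koszul-style cofibrant replacement. I would set $P:=\mathcal{B}\otimes_k k[u]$, where $u$ is a formal variable of bidegree $(0,1)$, equipped with the internal differential obtained by declaring $d(u):=\epsilon$ and extending as a graded derivation, so that $d(u^n)=n\,\epsilon\,u^{n-1}$. Weight by weight the augmentation $P\to A$ sending $u\mapsto 0$ is a quasi-isomorphism: in weight $0$ only $A$ survives, while in weight $n\geq 1$ the complex reduces to $A\cdot u^n\xrightarrow{\cdot n}A\cdot\epsilon u^{n-1}$, which is acyclic in characteristic zero. Since $P$ is free as a $\mathcal{B}$-module on $\{u^n\}_{n\geq 0}$, it is cofibrant.

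The third step will compute $\mathrm{HOM}_{\mathcal{B}}(P,M)$ directly. By $\mathcal{B}$-linearity a morphism $\phi\colon P\to M$ of bidegree $(d,0)$ is uniquely determined by its values $\phi(u^n)\in M_n^d$ on the polynomial generators, so $\mathrm{HOM}_{\mathcal{B}}(P,M)^{\bullet,0}\cong\prod_{n\geq 0}M_n$ as graded $A$-modules. Combining the Koszul sign rule $\phi(\epsilon\,u^{n-1})=(-1)^d\epsilon_M\phi(u^{n-1})$ with the differential $D(\phi)=d_M\circ\phi-(-1)^d\phi\circ d_P$, the two occurrences of $(-1)^d$ cancel and one finds
$$D(\phi)(u^n)=d_M(\phi(u^n))-n\,\epsilon_M(\phi(u^{n-1}))\,.$$
Finally, I would perform the coordinate change $g_n:=(-1)^n\phi(u^n)/n!$ (a bijection in characteristic zero) and check by direct computation that in the new coordinates $D$ acts as $(g_n)_n\mapsto(d_M g_n+\epsilon g_{n-1})_n$, which is precisely the differential $d_M+\epsilon$ of the claimed formula.

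The main technical point will be carefully tracking the Koszul signs in the $\mathcal{B}$-linearity convention so that the $(-1)^d$ factors cancel; once this is in place, the cofibrancy of $P$ and the acyclicity of its positive-weight subcomplexes are essentially formal, and the final identification of the differential with $d_M+\epsilon$ is a short computation.
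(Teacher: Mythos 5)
Your proof is correct and follows essentially the same route as the paper: your Koszul-style resolution $P=\mathcal{B}\otimes_k k[u]$ is isomorphic (via $x_i\mapsto u^i/i!$, $y_{j}\mapsto \epsilon u^{j-1}/(j-1)!$) to the quasi-free cofibrant replacement $\widetilde{A}$ with generators $x_i,y_j$, $\epsilon(x_i)=y_{i+1}$, $d(x_i)=y_i$ used in the paper, and your final factorial change of coordinates is exactly this change of generators. The only (cosmetic) difference is that your multiplicative presentation invokes characteristic zero for the weightwise acyclicity and the rescaling, which the paper's additive presentation does not need, but characteristic zero is assumed throughout anyway.
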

\begin{proof}
Let us (temporarily) use the projective model structure on $\mod_A^\egr$. For this model structure, every object is fibrant. Moreover, one has the following explicit cofibrant replacement $\widetilde{A}$ for $A$, which is quasi-free (as a graded mixed $A$-module): 
\begin{itemize}
\item[--] let us consider the free $A$-module generated by $x_i$'s for $i\geq 0$ and $y_j$'s for $j\geq1$. 
\item[--] assign to $x_i$ cohomological degree $0$ and weight $i$, and to $y_j$ cohomological degree $1$ and weight $j$. 
\item[--] define $\epsilon(x_i)=y_{i+1}$. 
\item[--] modify the differential\footnote{Before doing that, our graded mixed module was free. } by imposing that $d(x_i)=y_i$ (by convention, $y_0=0$). 
\end{itemize}
Finally observe that $\mathrm{HOM}_{\MOD_A^{\egr}}(\tilde{A},M)=(\prod_{n\geq0}M_n,d_M+\epsilon)$. 
\end{proof}

\subsection{Graded mixed cdgas and dg-Lie algebroids}

Let $B\to A$ be a morphism of cdgas which, as usual, we assume to be cofibrant. 
\begin{Prop}
The Chevalley--Eilenberg functor $\mathrm{CE}^\bullet_{B/A}$ factors through a commuting diagram 
$$
\xymatrix{
& \CDGA_{B/A}^\egr \ar[rd]^-{|-|} & \\
\LIE_{B/A}^{op} \ar[rr]^-{\mathrm{CE}^\bullet_{B/A}} \ar[ru]^-{\mathrm{CE}^{\epsilon}_{B/A}} & & \CDGA_{B/A}
}
$$
\end{Prop}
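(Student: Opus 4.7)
The plan is to construct $\mathrm{CE}^{\epsilon}_{B/A}(L)$ explicitly as a graded mixed cdga by unpacking the Chevalley--Eilenberg differential according to its weight-shift behavior, and then observe that the standard realization recovers exactly the total Chevalley--Eilenberg cdga.

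Assuming (as usual) that $B\to A$ is cofibrant and $L$ is a $(B,A)$-dg-Lie algebroid whose underlying $A$-module is projective, I define
$$\mathrm{CE}^{\epsilon}_{B/A}(L)_n := \mathrm{Hom}_A\bigl(S^n_A(L[1]),A\bigr),$$
placed in weight $n$. The key observation is that in the Chevalley--Eilenberg formula recalled in the paper, the three summands shift the symmetric power degree by different amounts: the first (induced by $d_L$) is weight-preserving, while the second (anchor) and the third (bracket) both raise the weight by $1$. I put the first summand as the internal differential $d_1$ and package the remaining two as a weight-$(+1)$, degree-$(+1)$ mixed differential $\epsilon$. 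The equation $d_{\mathrm{CE}}^{\,2}=0$ then splits by weight into $d_1^2=0$, $d_1\epsilon+\epsilon d_1=0$ and $\epsilon^2=0$, so $(\mathrm{CE}^{\epsilon}_{B/A}(L),d_1,\epsilon)$ is a genuine graded mixed complex.

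Next I endow it with the cdga structure. The usual shuffle product on the Chevalley--Eilenberg complex sends weight $n$ tensor weight $m$ into weight $n+m$, is graded commutative, and is a derivation for both $d_1$ and $\epsilon$ (the former by construction, the latter by the Leibniz rule for the anchor in the algebroid axioms). Placing $B$ and $A$ in weight $0$ with trivial mixed differential, the inclusion $B\hookrightarrow \mathrm{CE}^{\epsilon}_{B/A}(L)$ (through the weight-$0$ part $A$, via $B\to A$) and the projection onto weight $0$ give a factorization in $\cdga_{B/A}^{\epsilon\text{-}gr}$. The assignment $L\mapsto \mathrm{CE}^{\epsilon}_{B/A}(L)$ preserves quasi-isomorphisms between projective-over-$A$ algebroids by the standard spectral-sequence argument (filtering by weight, the $E_1$-page is computed via $d_L$ alone), so after cofibrant replacement in $\LIE_{B/A}$ one obtains a well-defined $\infty$-functor $\mathrm{CE}^{\epsilon}_{B/A}:\LIE_{B/A}^{op}\to \CDGA_{B/A}^{\epsilon\text{-}gr}$.

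Finally, by Proposition \ref{propstandardreal}, the standard realization is
$$|\mathrm{CE}^{\epsilon}_{B/A}(L)|\simeq \Bigl(\prod_{n\geq 0}\mathrm{Hom}_A(S^n_A(L[1]),A),\; d_1+\epsilon\Bigr),$$
which is precisely $\mathrm{CE}^\bullet_{B/A}(L)$ with its full Chevalley--Eilenberg differential. Since $|-|$ is lax symmetric monoidal and preserves the relevant under/over data, this identification upgrades to an equivalence in $\CDGA_{B/A}$, naturally in $L$. I do not anticipate any serious obstacle: the entire argument is essentially bookkeeping once one notices that ``weight $=$ symmetric power'' diagonalizes the Chevalley--Eilenberg differential into a weight-preserving piece ($d_L$) and a weight-$(+1)$ piece (anchor $+$ bracket). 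The only mild care needed is in the derived setting, where one must apply the construction on an $A$-projective model of $L$, exactly as in the original construction of $\mathrm{CE}^\bullet_{B/A}$ itself.
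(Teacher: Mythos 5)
Your proof is correct and follows essentially the same route as the paper: you define $\mathrm{CE}^{\epsilon}_{B/A}(L)_n=S^n_A(L[1])^\vee$ in weight $n$, split the Chevalley--Eilenberg differential into its weight-preserving part (from $d_L$) and its weight-raising part (anchor plus bracket, taken as the mixed differential $\epsilon$), and recover $\mathrm{CE}^\bullet_{B/A}$ via the explicit standard realization of Proposition \ref{propstandardreal}. The only difference is that you spell out the bookkeeping (the weight-splitting of $d_{\mathrm{CE}}^2=0$, the multiplicative structure, and invariance under quasi-isomorphisms on $A$-projective models) which the paper leaves to the reader.
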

\begin{proof}
For a dg-$(B,A)$-algebroid $L$ we set $\mathrm{CE}^\epsilon_{B/A}(L)_n:=S^n(L[1])^\vee$, together with 
$$
\epsilon(\omega)(\ell_0,\dots,\ell_n):=\pm\sum_{i=0}^n\rho(\ell_i)\big(\omega(\ell_0,\dots\hat{\ell_i},\dots,\ell_n)\big)\pm\sum_{0\leq i<j\leq n}\omega([\ell_i,\ell_j],\ell_0,\dots\hat{\ell_i},\dots,\hat{\ell_j},\dots,\ell_n)\,.
$$
We let the reader check that we indeed have that $|\mathrm{CE}^\epsilon_{B/A}(L)|\simeq \mathrm{CE}^\bullet_{B/A}(L)$ using Proposition\ref{propstandardreal}.\footnote{We simply observe that the discrepancy between $d_{\mathrm{CE}}$ and $\epsilon$ is resolved precisely thanks to the specific form of the realization functor. } 
\end{proof}

Hence, composing with the forgetful functor $\CDGA_{B/A}^{\egr}\to\CDGA_{B}^\egr$, we get a functor 
$$
\LIE_{B/A}^{op}\to \CDGA_{B}^{\egr}\,,
$$
which lands in the full sub-$\infty$-category consisting of graded mixed cdgas $C$ such that $C_0\simeq A$. 
It even factors through the $\infty$-category consisting of graded mixed cdgas $C$ together with a the data of an equivalence $C_0\simeq A$, which has an initial object, denoted $\bold{DR}(A/B)$ (an explicit description of which is provided in \cite{CPTVV}). 
Actually, the functor $\bold{DR}(-/B):\CDGA_B\to \CDGA_B^\egr$ is left adjoint to the functor sending a graded mixed $B$-cdga $C$ to its weight zero component $C_0$. 
\begin{Lem}
The morphism $\bold{DR}(A/B)\to CE^\epsilon_{B/A}(\mathbb{T}_{A/B})$ is an equivalence whenever $\mathbb{L}_{A/B}$ is coherent and (cohomologically) bounded above. 
\end{Lem}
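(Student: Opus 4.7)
The plan is to identify both sides of the comparison morphism explicitly, weight by weight, and then check that the comparison reduces to a duality statement that becomes an equivalence precisely under the stated hypothesis.

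First, I would recall the explicit description of $\bold{DR}(A/B)$ as a graded mixed $B$-cdga (see \cite{CPTVV}, \S1.3): its weight $n$ piece is $\mathrm{Sym}^n_A(\mathbb{L}_{A/B}[-1])$, with the mixed differential given by the (shifted) de Rham differential. The universal property is that $\bold{DR}(A/B)$ is the initial graded mixed $B$-cdga whose weight zero part is $A$; this description comes from the adjunction with the weight-zero functor. On the other hand, unwinding the definition of $\mathrm{CE}^\epsilon_{B/A}$ that was given just before the lemma, the weight $n$ piece of $\mathrm{CE}^\epsilon_{B/A}(\mathbb{T}_{A/B})$ is $S^n_A(\mathbb{T}_{A/B}[1])^\vee$, and the universal property of $\bold{DR}(A/B)$ together with the identification of the weight zero parts provides a canonical map $\bold{DR}(A/B)\to \mathrm{CE}^\epsilon_{B/A}(\mathbb{T}_{A/B})$.

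Next, I would verify that, weight by weight, this comparison morphism is the natural map
\[
\mathrm{Sym}^n_A\bigl(\mathbb{L}_{A/B}[-1]\bigr)\longrightarrow S^n_A\bigl(\mathbb{T}_{A/B}[1]\bigr)^\vee\,.
\]
The assumption that $\mathbb{L}_{A/B}$ is coherent and cohomologically bounded above means that $\mathbb{L}_{A/B}$ is a perfect (hence dualizable) $A$-module, so that $\mathbb{T}_{A/B}=\mathbb{L}_{A/B}^\vee$ and the canonical biduality map $\mathbb{L}_{A/B}\to (\mathbb{L}_{A/B}^\vee)^\vee$ is an equivalence. Since we are in characteristic zero, the norm map provides a natural equivalence between the symmetric power $S^n$ and the divided power $\Gamma^n$ for any dualizable $A$-module $M$, and so one obtains a chain of equivalences
\[
S^n_A\bigl(\mathbb{T}_{A/B}[1]\bigr)^\vee \simeq \Gamma^n_A\bigl(\mathbb{T}_{A/B}[1]^\vee\bigr) \simeq S^n_A\bigl(\mathbb{L}_{A/B}[-1]\bigr).
\]
Tracing the construction of the comparison map shows that it agrees with this equivalence.

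Finally, I would check that the comparison is compatible with the mixed differentials: on the de Rham side, the mixed differential is $d_{\mathrm{dR}}$, while on the Chevalley side it is the differential dual to the anchor map and bracket, as written in the definition of $\mathrm{CE}^\epsilon_{B/A}$. The compatibility is a direct unwinding, as the anchor map of $\mathbb{T}_{A/B}$ factors through $\mathbb{T}_{A/B}\to \mathbb{T}_A$ and induces, after dualization, the universal derivation $A\to \mathbb{L}_{A/B}[-1][1]\simeq \mathbb{L}_{A/B}$.

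The main obstacle is the dualizability step: without the coherent/bounded hypothesis, the natural transformation $\mathrm{Sym}^n(-)^\vee\to \Gamma^n((-)^\vee)$ is not generally an equivalence, and both the symmetric power and the biduality can fail to commute with taking duals. Once perfectness of $\mathbb{L}_{A/B}$ is granted, everything else is formal: the identifications are canonical in characteristic zero and the differential-level compatibility is a direct check.
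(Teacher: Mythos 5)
You follow the same route as the paper: since equivalences of graded mixed objects are detected on the underlying graded objects, the lemma reduces to showing that the weight-$n$ component
$S^n_A(\mathbb{L}_{A/B}[-1])\to S^n_A(\mathbb{T}_{A/B}[1])^\vee$
of the comparison map is an equivalence for every $n$, and your identification of the two sides, of the map, and of the mixed differentials agrees with the paper's proof (which essentially stops at this point).

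The gap is in how you justify that last step. You assert that ``$\mathbb{L}_{A/B}$ coherent and cohomologically bounded above'' means that $\mathbb{L}_{A/B}$ is perfect, hence dualizable, and then the whole argument (biduality, the norm map $S^n\simeq\Gamma^n$, compatibility of symmetric powers with duals) is run through dualizability. But the stated hypothesis is strictly weaker than perfectness: coherent (i.e.\ almost perfect) and bounded above does not give finite Tor-amplitude, so $\mathbb{L}_{A/B}$ need not be dualizable, and the biduality map need not be an equivalence for such complexes in general. This is not a marginal case: it is precisely the situation in which the lemma is used immediately afterwards, with $A=B_{red}$. For instance for $B=\kk[x]/x^2$ one has $A=\kk$ and $\mathbb{L}_{B_{red}/B}$ is coherent, concentrated in degrees $\leq -1$, but has nonzero cohomology in infinitely many negative degrees, hence is certainly not perfect; your argument does not apply there, whereas the lemma must. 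Under the stated hypothesis the weight-wise equivalence has to be obtained from boundedness above rather than from dualizability: representing $\mathbb{L}_{A/B}[-1]$ by a bounded-above complex of finite free $A$-modules (possible by coherence and boundedness above), in each weight $n$ and each cohomological degree only finitely many finite-rank summands contribute, so the degreewise finite duality between $S^n$ of this complex and $S^n$ of its termwise dual goes through without the complex being dualizable as a whole; moreover $\mathrm{CE}^\epsilon_{B/A}$ is defined via the $A$-dual of a projective model of the Lie algebroid, so the comparison should be made at that level. As written, your proof establishes the lemma only under the stronger hypothesis that $\mathbb{L}_{A/B}$ is perfect.
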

\begin{proof}
Equivalences between graded mixed objects are checked componentwise. 
On weight components the map is given by 
$$
\bold{DR}(A/B)_n\simeq S^n_A(\mathbb{L}_{A/B}[-1])\to S^n_A(\mathbb{T}_{A/B}[1])^\vee\simeq CE^\epsilon_{B/A}(\mathbb{T}_{A/B})_n\,, 
$$
which is an equivalence if $\mathbb{L}_{A/B}$ is coherent and (cohomologically) bounded above. 
\end{proof}

If $B$ is a non-positively graded $\kk$-cdga ($\kk$ being a field, for simplicity) that is \textit{almost finitely presented}, then we know that $|\mathbf{DR}(B_{red}/B)|\simeq B$ after \cite[Lemma 2.2.4]{CPTVV}. 
Hence we get that 
$$
|CE_{\kk/B_{red}}^\epsilon(\mathbb{T}_{B_{red}/B})|\simeq|CE_{B/B_{red}}^\epsilon(\mathbb{T}_{B_{red}/B})|\simeq B\,,
$$
which we already knew for $B$ almost of finite type.

\bibliographystyle{plain}
\bibliography{ringard}

\let\thefootnote\relax\footnote{~\\
D.C.: IMAG, Univ Montpellier, CNRS, Montpellier, France \& Institut Universitaire de France \\
Email: \url{damien.calaque@umontpellier.fr}\\
~\\
J.G.: CNRS, I2M (Marseille) \& IH\'ES \\
Email: \url{jgrivaux@math.cnrs.fr}
}

\end{document}